\DeclareFontFamily{U}{wncy}{}
    \DeclareFontShape{U}{wncy}{m}{n}{<->wncyr10}{}
    \DeclareSymbolFont{mcy}{U}{wncy}{m}{n}
    \DeclareMathSymbol{\Sh}{\mathord}{mcy}{"58} 
\def\@tocline#1#2#3#4#5#6#7{\relax
  \ifnum #1>\c@tocdepth % then omit
  \else
    \par \addpenalty\@secpenalty\addvspace{#2}%
    \begingroup \hyphenpenalty\@M
    \@ifempty{#4}{%
      \@tempdima\csname r@tocindent\number#1\endcsname\relax
    }{%
      \@tempdima#4\relax
    }%
    \parindent\z@ \leftskip#3\relax \advance\leftskip\@tempdima\relax
    \rightskip\@pnumwidth plus4em \parfillskip-\@pnumwidth
    #5\leavevmode\hskip-\@tempdima
      \ifcase #1
       \or\or \hskip .5em \or \hskip 1em \else \hskip 1.5em \fi%
      #6\nobreak\relax
    \dotfill\hbox to\@pnumwidth{\@tocpagenum{#7}}\par
    \nobreak
    \endgroup
  \fi}
\newcommand{\C}{\mathbb{C}}
\newcommand{\N}{\mathbb{N}}
\newcommand{\Z}{\mathbb{Z}}
\newcommand{\R}{\mathbb{R}}
\newcommand{\Q}{\mathbb{Q}}
\newcommand{\F}{\mathbb{F}}
\newcommand{\A}{\mathbb{A}}
\newcommand{\X}{\mathbb{X}}
\newcommand{\Gal}{\operatorname{Gal}}
\newcommand{\Fr}{\mathrm{Fr}}
\newcommand{\geo}{\mathrm{geo}}
\newcommand{\rk}{\operatorname{rk}}
\newcommand{\ad}{\operatorname{ad}}
\renewcommand{\sc}{\operatorname{sc}}
\renewcommand{\sp}{\operatorname{sp}}
\newcommand{\ssp}{\operatorname{ssp}}
\renewcommand{\ss}{\operatorname{ss}}
\newcommand{\der}{\operatorname{der}}
\newcommand{\red}{\operatorname{red}}
\newcommand{\Hom}{\operatorname{Hom}}
\newcommand{\Aut}{\operatorname{Aut}}
\newcommand{\Out}{\operatorname{Out}}
\newcommand{\Inn}{\operatorname{Inn}}
\newcommand{\End}{\operatorname{End}}
\newcommand{\GL}{\mathrm{GL}}
\newcommand{\PGL}{\mathrm{PGL}}
\newcommand{\SL}{\mathrm{SL}}
\newcommand{\et}{\mathrm{\text{\'e}t}}
\newcommand{\bB}{\mathbf{B}}
\newcommand{\bC}{\mathbf{C}}
\newcommand{\bD}{\mathbf{D}}
\newcommand{\bG}{\mathbf{G}}
\newcommand{\bH}{\mathbf{H}}
\newcommand{\bM}{\mathbf{M}}
\newcommand{\bN}{\mathbf{N}}
\newcommand{\bS}{\mathbf{S}}
\newcommand{\bT}{\mathbf{T}}
\newcommand{\bU}{\mathbf{U}}
\newcommand{\bY}{\mathbf{Y}}
    \renewcommand\paragraph{\@startsection{paragraph}{4}{\z@}%
                                             {-1ex plus -1ex minus -0.2ex}% (distance from previous line)
                                             {-1ex plus 0.2ex}% (distance between the numbering and the content)
                                          {\normalfont\normalsize}}
   \renewcommand\subparagraph{\@startsection{subparagraph}{5}{\z@}%
                                          {-3.25ex\@plus -1ex \@minus -.2ex}%
                                          {0.0001pt \@plus .2ex}%
                                          {\normalfont\normalsize\bfseries}}
\newenvironment{customthm}[1]
  {\innercustomthm}
  {\endinnercustomthm}
\newenvironment{customprop}[1]
  {\innercustomprop}
  {\endinnercustomprop}
\newenvironment{custommthm}[1]
  {\innercustommthm}
  {\endinnercustommthm}
\newenvironment{customconj}[1]
  {\innercustomconj}
  {\endinnercustomconj}
\newtheorem{thm}{Theorem}[section]
\newtheorem{cor}[thm]{Corollary}
\newtheorem{prop}[thm]{Proposition}
\newtheorem{lemma}[thm]{Lemma}
\newtheorem{remark}[thm]{Remark}
\theoremstyle{definition}
\newtheorem{definition}[thm]{Definition}
\begin{document}

\title[On the rationality of algebraic monodromy groups of compatible systems]
{On the rationality of algebraic monodromy groups of compatible systems}

\author{Chun Yin Hui}
\email{chhui@maths.hku.hk, pslnfq@gmail.com}
\address{
Department of Mathematics\\
The University of Hong Kong\\
Pokfulam, Hong Kong
}

\maketitle

\begin{abstract} 
Let $E$ be a number field and $X$ a smooth geometrically connected variety defined over a characteristic $p$ finite field.
Given an $n$-dimensional pure $E$-compatible system 
of semisimple $\lambda$-adic representations of the \'etale fundamental group of $X$ 
with connected algebraic monodromy groups $\bG_\lambda$, 
we construct a common $E$-form $\bG$ of all the groups $\bG_\lambda$ and
in the absolutely irreducible case, a common $E$-form $\bG\hookrightarrow\GL_{n,E}$ 
of all the tautological representations $\bG_\lambda\hookrightarrow\GL_{n,E_\lambda}$ (Theorem \ref{thm1}).
Analogous rationality results in characteristic $p$ assuming the existence of 
crystalline companions in $\mathrm{\textbf{F-Isoc}}^{\dagger}(X)\otimes E_{v}$ for all $v|p$ (Theorem \ref{thmcrys})
and in characteristic zero assuming ordinariness (Theorem \ref{thm2}) are also obtained.
Applications include a construction of $\bG$-compatible system from some $\GL_n$-compatible system and 
some results predicted by the Mumford-Tate conjecture.
\end{abstract}

\selectlanguage{english} 
{\tableofcontents}

\newpage
\section{Introduction}
\subsection{The Mumford-Tate conjecture}
Let $A$ be an abelian variety defined over a number field $K\subset\C$, 
$V_\ell:=H^1(A_{\overline K},\Q_\ell)$ the \'etale cohomology groups for all primes $\ell$,
and $V_\infty=H^1(A(\C),\Q)$ the singular cohomology group.
The famous Mumford-Tate conjecture \cite[$\mathsection4$]{Mu66} asserts that 
the $\ell$-adic Galois representations $\rho_\ell:\Gal(\overline K/K)\to \GL(V_\ell)$
are independent of $\ell$, in the sense that if $\bG_\ell$ denotes
the \emph{algebraic monodromy group} of $\rho_\ell$ 
(the Zariski closure of the image of $\rho_\ell$ in $\GL_{V_\ell}$)
and $\bG_{MT}$ denotes the \emph{Mumford-Tate group} of the pure Hodge structure of $V_\infty$, then 
via the comparison isomorphisms $V_\ell\cong V_\infty\otimes\Q_\ell$ one has
\begin{equation}\label{iso1}
(\bG_{MT}\hookrightarrow\GL_{V_\infty})\times_\Q\Q_\ell\hspace{.1in}\cong\hspace{.1in} 
\bG_\ell^\circ\hookrightarrow\GL_{V_\ell}\hspace{.1in} \mathrm{for~all~\ell.}
\end{equation}
In particular, the representations $\rho_\ell$ are semisimple and the 
identity components $\bG_\ell^\circ$ are reductive with the same absolute root datum.
This conjectural $\ell$-independence of different algebraic monodromy representations 
can be formulated almost identically for projective smooth varieties $Y$ defined over $K$, 
and more generally, for pure motives over $K$ by the universal cohomology theory
envisaged by Grothendieck and some deep conjectures in algebraic and arithmetic geometry
(see \cite[$\mathsection3$]{Se94}).

The same Mumford-Tate type question 
can also be asked for projective smooth varieties $Y$ defined over a global field $K$ of characteristic $p>0$.
Since $V_\ell=H^w(Y_{\overline K},\Q_\ell)$ are \emph{Weil cohomology theories} for $Y_{\overline K}$ only 
when $\ell\neq p$\footnote{When $\ell=p$, one has to consider crystalline cohomology group of $Y$.},
one may ask if the algebraic monodromy representations $\bG_\ell^\circ\hookrightarrow\GL_{V_\ell}$
of the Galois representations $V_\ell$ are independent of $\ell$
for all $\ell\neq p$. 
This expectation is supported by the philosophy of motives (see \cite[$\mathsection$E]{Dr18}).
On the other hand, one can always exploit the fact that the 
system of $\ell$-adic Galois representations $\{V_\ell=H^w(Y_{\overline K},\Q_\ell)\}_\ell$
is a $\Q$-\emph{compatible system} (in the sense of Serre \cite[Chap. I-11 Definition]{Se98}) 
that is \emph{pure of weight} $w$ (proven by Deligne \cite{De74,De80}) to directly argue
$\ell$-independence of the algebraic monodromy representations $\bG_\ell^\circ\hookrightarrow\GL_{V_\ell}$. 
This approach holds,
regardless of the characteristic of the global field $K$.
By utilizing the compatibility and weight conditions of the compatible system, Serre developed the method of \emph{Frobenius tori} \cite{Se81} 
to prove the $\ell$-independence result below (Theorem \ref{thm0}). 

Let us define some notation first.
If $L$ is a subfield of $\overline \Q$, then denote by  $\mathcal{P}_{L}$ the set of places of $L$.
Denote by $\mathcal{P}_{L,f}$ (resp. $\mathcal{P}_{L,\infty}$) the 
set of finite (resp. infinite) places of $L$. 
Then $\mathcal{P}_{L}=\mathcal{P}_{L,f}\cup \mathcal{P}_{L,\infty}$.
Denote by $\mathcal{P}_{L,f}^{(p)}$ the set of elements of $\mathcal{P}_{L,f}$ not extending $p$.
The residue characteristic of the finite place $v\in \mathcal{P}_{L,f}$ is denoted by $p_v$.
Let $V$ and $W$ be free modules of finite rank over a ring $R$. Let 
$\bG_m\subset\cdots\subset\bG_1\subset\GL_V$ and $\bH_m\subset\cdots\subset\bH_1\subset\GL_W$
be two chain of closed algebraic subgroups over $R$. We say that the two chain representations 
(or simply representations if it is clear that they are chains of subgroups of some $\GL_n$) are isomorphic 
if there is an $R$-modules isomorphism $V\cong W$
such that the induced isomorphism $\GL_V\cong\GL_W$ maps $\bG_i$ isomorphically onto $\bH_i$ for $1\leq i\leq m$.

\begin{customthm}{A}(Serre)\label{thm0} \cite{Se81} (see also \cite{LP97})
\begin{enumerate}[(i)]
\item (The component groups) The quotient groups $\bG_\ell/\bG_\ell^\circ$ for all $\ell$ are isomorphic.
\item (Common $\Q$-form of formal characters) 
For all $v$ in a positive Dirichlet density subset of $\mathcal{P}_{K,f}$,
there exist a subtorus $\bT:=\bT_{v}$ of $\GL_{n,\Q}$ 
such that for all $\ell\neq p_v$,
 the representation $(\bT\hookrightarrow \GL_{n,\Q})\times_\Q\Q_\ell$ is isomorphic to 
the  representation $\bT_\ell\hookrightarrow\GL_{V_\ell}$ for some maximal torus $\bT_\ell$ of $\bG_\ell$. 
\end{enumerate}
\end{customthm}

\noindent It follows immediately that the connectedness and the absolute rank of $\bG_\ell$ are
both independent of $\ell$.
Later, Larsen-Pink obtained some $\ell$-independence results
for abstract semisimple compatible system on a Dirichlet density one set of primes $\ell$ \cite{LP92} and for the
geometric monodromy of $\{V_\ell\}_\ell$ if $\mathrm{Char}(K)>0$ \cite{LP95}. 
When $\mathrm{Char}(K)=0$, the author proved that the formal 
bi-character (Definition \ref{fbc}(ii)) of 
$\bG_\ell^\circ\hookrightarrow\GL_{V_\ell}$ is independent of $\ell$ and obtained $\ell$-independence of 
$\bG_\ell^\circ$ under some type A hypothesis \cite{Hu13,Hu18}.
The next result is by far the best result in positive characteristic, in
a setting more general than the above \'etale cohomology case.

Let $X$ be a smooth geometrically connected variety defined 
over a finite field $\F_q$ of characteristic $p$.
Let $E$ be a number field.
For any $\lambda\in\mathcal{P}_{E}$, denote by $E_\lambda$ the $\lambda$-adic completion of $E$. 
Let 
\begin{equation}\label{rhobullet}
\rho_\bullet:=\{\rho_\lambda:
\pi_1^{\et}(X,\overline x)\to\GL(V_\lambda)\}_{\lambda\in\mathcal{P}_{E,f}^{(p)}}
\end{equation}
be an $E$-compatible 
system of $n$-dimensional semisimple $\lambda$-adic representations of the 
\emph{\'etale fundamental group} $\pi_1^{\et}(X,\overline x)$ of $X$ (with base point $\overline x$)
that is pure of integral weight $w$. Denote by $\bG_\lambda\subset\GL_{V_\lambda}$ 
the algebraic monodromy group of the representation $V_\lambda$.
For simplicity, set $\pi_1(X)=\pi_1^{\et}(X,\overline x)$ and 
for all $\lambda\in\mathcal{P}_{E,f}^{(p)}$, choose coordinates for $V_\lambda$ so that $\bG_\lambda$ 
is identified as a subgroup of $\GL_{n,E_\lambda}$. The following theorem was obtained by 
Chin when $X$ is a curve \cite{Ch04}\footnote{\cite{Ch04} used pivotally Serre's Frobenius tori 
and Lafforgue's work \cite{La02} on the Langlands' conjectures. In case $X$ is a curve, 
Theorem \ref{Chin}(i),(ii),(iii) 
follow, respectively, from Lemma 6.4, Thm. 1.4, Thm. 6.8 and Cor. 6.9 of the paper.
} and is true in general 
by reducing to the curve case by finding a suitable curve $S$ in some covering 
$X'$ of $X$ \cite[$\mathsection3.3$]{BGP19}, see also \cite[$\mathsection$4.3]{D'Ad20}.

\begin{customthm}{B}\label{Chin}
Let $\rho_\bullet$ be an $E$-compatible system of $n$-dimensional $\lambda$-adic semisimple 
representations of $\pi_1(X)$ that is 
pure of integer weight $w$. The following assertions hold in some coordinates of $V_\lambda$.
\begin{enumerate}[(i)]
\item (Common $E$-form of formal characters): There exists a subtorus $\bT$ of $\GL_{n,E}$ such that for all 
$\lambda\in\mathcal{P}_{E,f}^{(p)}$,
 $\bT_\lambda:=\bT\times_E E_\lambda$ is a maximal torus  of $\bG_\lambda$.
\item ($\lambda$-independence over an extension): 
There exist a finite extension $F$ of $E$ and a chain of subgroups $\bT^{\sp}\subset\bG^{\sp}\subset\GL_{n,F}$
such that $\bG^{\sp}$ is connected split reductive, $\bT^{\sp}$ is a split maximal torus of $\bG^{\sp}$, and for all 
$\lambda\in\mathcal{P}_{E,f}^{(p)}$, 
if $F_\lambda$ is a completion of $F$ extending $\lambda$ on $E$, then there exists an isomorphism of chain representations:
\begin{equation*}
f_{F_\lambda} :(\bT^{\sp}\subset\bG^{\sp}\hookrightarrow\GL_{n,F})\times_F F_\lambda\stackrel{\cong}{\rightarrow} 
(\bT_{\lambda}\subset\bG_\lambda^\circ\hookrightarrow\GL_{n, E_\lambda})\times_{E_\lambda} F_\lambda.
\end{equation*}
\item (Rigidity) The isomorphisms $f_{F_\lambda}$ in (ii) can be chosen such that the restriction isomorphisms $f_{F_\lambda}:\bT^{\sp}\times_F F_\lambda\to\bT_{\lambda}\times_{E_\lambda} F_\lambda$ admit a common $F$-form $f_F:\bT^{\sp}\to \bT\times_E F$ 
for all $\lambda\in\mathcal{P}_{E,f}^{(p)}$ and $F_\lambda$.
\end{enumerate}
\end{customthm}

\subsection{The results of the paper}
\subsubsection{Characteristic $p$}
\paragraph{} Theorem \ref{Chin}(ii) asserts that the algebraic monodromy 
representations $\bG_\lambda^\circ\hookrightarrow\GL_{n, E_\lambda}$ have a common (split) $F$-model
after finite extensions $F_\lambda$ of $E_\lambda$. 
The main theme of this article is to remove these extensions.
Base on Theorem \ref{Chin}(i)--(iii) and some ideas seeded in \cite{Hu18}, 
we prove the following $E$-rationality result (Theorem \ref{thm1}).
In case the representations $V_\lambda$ are absolutely 
irreducible\footnote{In general, 
we expect a common $E$-form of the faithful representations
$\bG_\lambda\hookrightarrow\GL_{V_\lambda}$ for all $\lambda\in\mathcal{P}_{E,f}^{(p)}$ exists.}, 
it answers the Mumford-Tate type question in positive characteristic.

\begin{thm}\label{thm1}
Let $\rho_\bullet:=\{\rho_\lambda:
\pi_1(X)\to\GL(V_\lambda)\}_{\lambda\in\mathcal{P}_{E,f}^{(p)}}$ be an $E$-compatible system of $n$-dimensional $\lambda$-adic semisimple 
representations of $\pi_1(X)$ that is 
pure of integer weight $w$.
Then the following assertions hold.
\begin{enumerate}[(i)]
\item There exists a connected reductive group $\bG$ defined over $E$ such that $\bG\times_E E_\lambda\cong \bG_\lambda^\circ$ for all $\lambda\in\mathcal{P}_{E,f}^{(p)}$.
\item If moreover $\bG_\lambda^\circ\hookrightarrow \GL_{V_\lambda}$ is absolutely irreducible for some $\lambda$, 
then there exists
a connected reductive subgroup $\bG$ of $\GL_{n,E}$
such that for all $\lambda\in\mathcal{P}_{E,f}^{(p)}$, the representations are isomorphic: 
\begin{equation*}
(\bG\hookrightarrow \GL_{n,E})\times_E E_\lambda\cong (\bG_\lambda^\circ\hookrightarrow\GL_{V_\lambda}).
\end{equation*}
\end{enumerate}
\end{thm}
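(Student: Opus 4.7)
My strategy is Galois descent from $F$ to $E$ of the split model furnished by Theorem \ref{Chin}, using Chebotarev's density theorem to reduce global stability to stability under decomposition groups one prime at a time. First I would apply Theorem \ref{Chin} and normalize: after a coordinate change we have a common $E$-form $\bT\subset\GL_{n,E}$ of the maximal tori, a finite Galois extension $F/E$, and a split reductive $F$-subgroup $\bG^{\sp}\subset\GL_{n,F}$ containing $\bT^{\sp}$. Invoking the rigidity (iii), I would identify $\bT^{\sp}$ with $\bT\times_E F$ via $f_F$, so that for every $\lambda$ and every place $w\mid\lambda$ of $F$ the chain isomorphism $f_{F_w}$ of Theorem \ref{Chin}(ii) is realized by an element of $\GL_n(F_w)$ that is the identity on the common torus, i.e.\ centralizes $\bT\times F_w$.

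\textbf{Galois stability of roots and descent of $\bG^{\sp}$.} Via each $f_{F_w}$, the root system $\Phi$ of $\bG^{\sp}$ with respect to $\bT^{\sp}$, viewed as a subset of $X^\ast(\bT)$, coincides with the root system of $\bG_\lambda^\circ$ with respect to $\bT_\lambda$, and the latter is stable under the decomposition group $D_w\subset\Gal(\overline E/E)$ since $\bG_\lambda^\circ$ and $\bT_\lambda$ are defined over $E_\lambda$. Letting $L/E$ be the splitting field of $\bT$, the $\Gal(\overline E/E)$-action on $X^\ast(\bT)$ factors through $\Gal(L/E)$; by Chebotarev every conjugacy class of $\Gal(L/E)$ is a Frobenius class at some unramified $w$, so the stabilizer of $\Phi$ contains a representative of every conjugacy class and hence equals $\Gal(L/E)$. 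Therefore $\Phi$ is $\Gal(\overline E/E)$-stable in $X^\ast(\bT)$. For part (ii), absolute irreducibility combined with Schur's lemma gives $Z_{\GL_n}(\bG_\lambda^\circ)=\G_m$, so $f_{F_w}$ is determined up to a scalar in $F_w^\times$; the resulting cocycle $\tilde\sigma\mapsto f_{F_w}^{-1}\tilde\sigma(f_{F_w})\in F_w^\times$ on $\Gal(F_w/E_\lambda)$ is trivialized by Hilbert 90, and after rescaling we may take $f_{F_w}\in\GL_n(E_\lambda)$. Then $\bG^{\sp}\times_F F_w = f_{F_w}(\bG_\lambda^\circ\times_{E_\lambda} F_w)f_{F_w}^{-1}$ is the base change of the $E_\lambda$-subgroup $f_{F_w}\bG_\lambda^\circ f_{F_w}^{-1}\subset\GL_{n,E_\lambda}$, hence is $\Gal(F_w/E_\lambda)$-stable; by faithful flatness this yields $\Frob_w(\bG^{\sp})=\bG^{\sp}$ as $F$-subvarieties of $\GL_{n,F}$. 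A second application of Chebotarev to $\Gal(F/E)$ gives $\sigma(\bG^{\sp})=\bG^{\sp}$ for every $\sigma$, and ordinary Galois descent produces the desired $\bG\subset\GL_{n,E}$, the rationalized $f_{F_w}$ exhibiting the isomorphism of chain representations required by (ii).

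\textbf{Part (i) and main obstacle.} For (i) one does not have Schur's lemma at hand, so the rationalization of $f_{F_w}$ via Hilbert 90 is unavailable; instead I would apply the same Chebotarev/descent strategy to the abstract $F$-group $\bG^{\sp}$, whose isomorphism class is determined by the Galois-stable root datum $(X^\ast(\bT),\Phi,X_\ast(\bT),\Phi^\vee)$ together with a compatible choice of pinning (patched from the local $E_\lambda$-rational pinnings via Chebotarev). The main obstacle, and the step where absolute irreducibility is essential in the passage from (i) to (ii), is precisely the rationalization: upgrading an $F_w$-linear chain isomorphism to an $E_\lambda$-linear one. With Schur's lemma the relevant cocycle takes values in the abelian group $F_w^\times$ and is killed by Hilbert 90; without it, the cocycle lives in a non-abelian normalizer, and one must work with the abstract group rather than a fixed embedding.
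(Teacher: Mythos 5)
The paper's proof of Theorem \ref{thm1} reduces to Metatheorem \ref{meta1} (checking conditions (a)--(d) via Theorem \ref{Chin} and Proposition \ref{hyper}), and the metatheorem itself is proved by non-abelian Galois cohomology: one extracts from rigidity a cocycle $\mu$ valued in $\Omega_{\overline E}$ (resp.\ $\omega_{\overline E}$), passes to its image $\chi$ in the outer automorphism group, builds the quasi-split form $\bG'$ from $\chi$ via Theorem \ref{quasisplit}, identifies each local class $[\bG_\lambda]$ in $H^1(E_\lambda,\bG'^{\ad})$ using Corollary \ref{fiber}, shows these vanish for almost all $\lambda$ via the quasi-split condition (d), and then glues them into a single global class using the surjectivity of the localization map for the semisimple group $\bG'^{\ad}$ (Theorem \ref{local-globalred} and Proposition \ref{local-globalss}), crucially leaving the place above $p$ unconstrained so the sum condition $\oplus\mu_\lambda=0$ can be satisfied. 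Your proposal takes an entirely different route — direct Galois descent of the split subvariety $\bG^{\sp}\subset\GL_{n,F}$ to $E$ using Chebotarev — and unfortunately it contains a genuine gap.

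The gap is in the rationalization step. Fix $f=f_{F_w}\in\GL_n(F_w)$ with $f\bG_\lambda f^{-1}=\bG^{\sp}\times_F F_w$ and $f|_{\bT\times F_w}=\mathrm{id}$. For $\tilde\sigma\in\Gal(F_w/E_\lambda)$ with restriction $\sigma\in\Gal(F/E)$, a direct computation shows that conjugation by $a_{\tilde\sigma}:=f^{-1}\tilde\sigma(f)$ carries $\bG_\lambda$ to $f^{-1}\bigl(\sigma(\bG^{\sp})\times_F F_w\bigr)f$, which equals $\bG_\lambda$ if and only if $\sigma(\bG^{\sp})=\bG^{\sp}$. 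Thus $a_{\tilde\sigma}$ does not even land in $N_{\GL_n}(\bG_\lambda)$, let alone the scalars $F_w^\times$, unless you already know the very Galois stability of $\bG^{\sp}$ you are trying to prove — the argument is circular. And even granting $\sigma(\bG^{\sp})=\bG^{\sp}$, the ambiguity in a chain isomorphism that is the identity on $\bT$ lies in $N_{\GL_n}(\bG_\lambda)\cap Z_{\GL_n}(\bT_\lambda)$, which contains all of $\bT_\lambda$ and is strictly larger than $\mathbb{G}_m=Z_{\GL_n}(\bG_\lambda)$; Schur's lemma controls the centralizer of the whole group, not of a maximal torus, so Hilbert 90 for $\mathbb{G}_m$ does not trivialize the relevant cocycle. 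There is also a structural issue you do not address: even if one could show $\sigma(\bG^{\sp})=\bG^{\sp}$ for all $\sigma$, the descended group $\bG_E$ and each $\bG_\lambda$ would merely be two $E_\lambda$-forms of the same $F_\lambda$-group, and matching them for \emph{every} $\lambda$ — including the finitely many where $\bG_\lambda$ fails to be quasi-split — requires exactly the Hasse-principle/surjectivity input for $H^1(E,\bG'^{\ad})$ that your outline omits. Relatedly, condition (d) of Metatheorem \ref{meta1} (quasi-splitness for almost all $\lambda$, verified in the paper via Proposition \ref{hyper} or \cite[Cor.\ 7.9(b)]{BGP17}) is never invoked in your proposal, and the sketch for part (i) — "patching local pinnings via Chebotarev" into a global pinning — is not a well-defined operation: there is no canonical way to glue local pinnings of groups that have not yet been descended to $E$.
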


\paragraph{} 
Let $\mathcal{O}_\lambda$ be the ring of integers of $E_\lambda$, 
 $\mathcal{O}_E$ be the ring of integers of $E$, $\mathcal{O}_{E,S}$ be the localization
for some finite subset $S\subset \mathcal{P}_{E,f}$, and $\A_E^{(p)}$ be the adele ring of $E$ without factors above $p$.
We construct an adelic representation $\rho_\A^{\bG}$ in Corollary \ref{cor1} and in the absolutely irreducible case,
a common model $\mathcal{G}\subset \GL_{n,\mathcal{O}_{E,S}}$ of the group schemes 
 $\mathcal{G}_\lambda\hookrightarrow \GL_{n,\mathcal{O}_\lambda}$ (with respect to some $\mathcal O_\lambda$-lattice in $V_\lambda$) 
for all but finitely many $\lambda$ in Corollary \ref{cor1.5}.

\begin{cor}\label{cor1}
Let $\rho_\bullet$ be a $\lambda$-adic compatible system of $\pi_1(X)$ as above.
Suppose $\bG_\lambda$ is connected for all $\lambda$. Then the following assertions hold.
\begin{enumerate}[(i)]
\item There exist a connected reductive group $\bG$ defined over $E$ and an isomorphism 
$\bG\times_E E_\lambda \stackrel{\phi_\lambda}{\rightarrow}\bG_\lambda$ for each $\lambda\in\mathcal{P}_{E,f}^{(p)}$
such that the direct product representation 
$$\prod_{\lambda\in\mathcal{P}_{E,f}^{(p)}}\rho_\lambda:\pi_1(X)\to \prod_{\lambda\in\mathcal{P}_{E,f}^{(p)}}\bG_\lambda(E_\lambda)$$
 factors through a $\bG$-valued adelic representation via $\phi_\lambda$:
$$\rho_{\A}^{\bG}:\pi_1(X)\to \bG(\A_E^{(p)}).$$
\item If the representations $V_\lambda$ are absolutely irreducible, then there exist
% an inner form $\GL_{m,D}$ of $\GL_{n,E}$ (for some central division algebra $D$ over $E$) together with 
a connected reductive subgroup $\bG$ of $\GL_{n,E}$ and an isomorphism of representations 
$(\bG\hookrightarrow \GL_{n,E})\times_E E_\lambda \stackrel{\phi_\lambda}{\rightarrow}(\bG_\lambda\hookrightarrow\GL_{V_\lambda}$) 
for each $\lambda\in\mathcal{P}_{E,f}^{(p)}$
such that the direct product representation 
$$\prod_{\lambda\in\mathcal{P}_{E,f}^{(p)}}\rho_\lambda:
\pi_1(X)\to \prod_{\lambda\in\mathcal{P}_{E,f}^{(p)}}\bG_\lambda(E_\lambda)
\subset\prod_{\lambda\in\mathcal{P}_{E,f}^{(p)}}\GL_n(E_\lambda)$$ factors through 
a $\bG$-valued adelic representation via $\phi_\lambda$:
$$\rho_{\A}^{\bG}:\pi_1(X)\to \bG(\A_E^{(p)})\subset \GL_{n,E}(\A_E^{(p)}).$$
\end{enumerate}
\end{cor}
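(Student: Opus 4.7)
My plan is to deduce both parts directly from Theorem~\ref{thm1}: that result already supplies a reductive $\bG/E$ (in case (ii), as a closed subgroup of $\GL_{n,E}$) together with, for each $\lambda\in\mathcal{P}_{E,f}^{(p)}$, an isomorphism $\phi_\lambda\colon\bG\times_E E_\lambda\to\bG_\lambda$ of algebraic groups (resp.\ of chain representations in case (ii)). Setting $\rho_{\A}^{\bG}:=(\phi_\lambda^{-1}\circ\rho_\lambda)_\lambda$ tautologically defines a homomorphism $\pi_1(X)\to\prod_{\lambda}\bG(E_\lambda)$ whose composition with $\prod\phi_\lambda$ recovers $\prod_\lambda\rho_\lambda$. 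The entire content of the corollary is then the integrality statement that this homomorphism takes values in the restricted product $\bG(\A_E^{(p)})$.

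To formalise this I would first spread $\bG$ out to a smooth affine integral model $\mathcal{G}$ over $\mathcal{O}_{E,S}$ for some finite $S\subset\mathcal{P}_{E,f}$, enlarging $S$ so that $\mathcal{G}\times\mathcal{O}_\lambda$ is reductive for every $\lambda\notin S$ and, in case (ii), so that a closed embedding $\mathcal{G}\hookrightarrow\GL_{n,\mathcal{O}_{E,S}}$ extending $\bG\hookrightarrow\GL_{n,E}$ exists. Then $\bG(\A_E^{(p)})$ is the restricted product relative to the open compact subgroups $\mathcal{G}(\mathcal{O}_\lambda)$. Since $\pi_1(X)$ is profinite, $\rho_\lambda(\pi_1(X))$ is compact in $\GL_n(E_\lambda)$ and stabilises some $\mathcal{O}_\lambda$-lattice $L_\lambda\subset V_\lambda$, hence lies in $\mathcal{G}_\lambda(\mathcal{O}_\lambda)$, where $\mathcal{G}_\lambda$ denotes the schematic closure of $\bG_\lambda$ in $\GL(L_\lambda)\cong\GL_{n,\mathcal{O}_\lambda}$. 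It therefore suffices to show that for almost all $\lambda$, one may choose $\phi_\lambda$ so that $\phi_\lambda^{-1}(\mathcal{G}_\lambda(\mathcal{O}_\lambda))\subset\mathcal{G}(\mathcal{O}_\lambda)$.

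For almost every $\lambda$, $\bG$ is unramified over $E_\lambda$ and both $\mathcal{G}(\mathcal{O}_\lambda)$ and $\phi_\lambda^{-1}(\mathcal{G}_\lambda(\mathcal{O}_\lambda))$ are hyperspecial maximal compact subgroups of $\bG(E_\lambda)$; by Bruhat--Tits theory they are $\bG^{\ad}(E_\lambda)$-conjugate. In case (i) I would absorb the conjugation into $\phi_\lambda$ using the freedom to post-compose it with any element of $\Aut(\bG)(E_\lambda)$, so no obstruction arises. In case (ii), the allowed modifications of $\phi_\lambda$ must respect the embedding and hence come from the normalizer of $\bG_\lambda$ in $\GL_n(E_\lambda)$ modulo scalars; absolute irreducibility of $V_\lambda$ makes the centralizer scalar, so $N_{\GL_n}(\bG_\lambda)(E_\lambda)/\bG_m(E_\lambda)$ embeds into $\Aut(\bG_\lambda)(E_\lambda)$. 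The main obstacle is to show that, for almost all $\lambda$, the needed $\bG^{\ad}(E_\lambda)$-element is attained in this subgroup; this should follow by a careful spreading-out argument combined with Chebotarev-style control over the places where $\bG$ fails to be split, with the finitely many exceptional places absorbed into $S$, yielding the desired factorisation $\rho_{\A}^{\bG}\colon\pi_1(X)\to\bG(\A_E^{(p)})$ and, in case (ii), the compatibility $\iota\circ\rho_{\A}^{\bG}=\prod_\lambda\rho_\lambda$ by construction.
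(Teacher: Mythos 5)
Your overall strategy is the same as the paper's: start from Theorem~\ref{thm1}, spread $\bG$ out to a reductive $\mathcal{O}_{E,S}$-model $\mathcal{G}$ so that $\mathcal{G}(\mathcal{O}_\lambda)$ is hyperspecial for $\lambda\notin S$, and then use Bruhat--Tits theory (conjugacy of hyperspecial maximal compacts under $\bG_\lambda^{\ad}(E_\lambda)$) to adjust $\phi_\lambda$ so that $\phi_\lambda^{-1}\circ\rho_\lambda$ lands in $\mathcal{G}(\mathcal{O}_\lambda)$. However, there is a substantive gap in the middle of the argument, and a secondary confusion in case (ii).

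The gap is the sentence asserting that, for almost every $\lambda$, $\phi_\lambda^{-1}\bigl(\mathcal{G}_\lambda(\mathcal{O}_\lambda)\bigr)$ is a hyperspecial maximal compact subgroup of $\bG(E_\lambda)$. Here $\mathcal{G}_\lambda$ is the schematic closure of $\rho_\lambda(\pi_1(X))$ in $\GL_{n,\mathcal{O}_\lambda}$, and there is no a priori reason for its special fibre to be reductive: the image of $\rho_\lambda$ is just some compact subgroup, and a priori $\mathcal{G}_\lambda(\mathcal{O}_\lambda)$ could sit inside a non-hyperspecial maximal compact, in which case the desired conjugation does not exist. This is precisely the content of the paper's Proposition~\ref{hyper} (``for almost all $\lambda$, $\rho_\lambda(\pi_1(X))$ is contained in a hyperspecial maximal compact subgroup of $\bG_\lambda(E_\lambda)$''), whose proof requires genuine arithmetic input: semisimplicity of the geometric monodromy group scheme $\mathcal{G}_\lambda^{\geo}$ over $\mathcal{O}_\lambda$ for almost all $\lambda$ (quoting \cite{BGP17} or \cite{Ca17}), followed by a dimension count comparing $\mathcal{G}_{k_\lambda}$ and $\mathcal{G}_{k_\lambda}^{\geo}$ to kill the unipotent radical. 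You cannot simply assert this; without it the Bruhat--Tits conjugacy step has nothing to act on.

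In case (ii), the ``main obstacle'' you describe does not actually exist, so the proposed spreading-out/Chebotarev argument is not needed (and would not obviously succeed as stated). The relevant fact, recorded in the paper via diagram~\eqref{inclu1}, is that absolute irreducibility forces the centre of $\bG_\lambda$ to be the scalars, so the natural $E_\lambda$-morphism $\bG_\lambda^{\ad}\to\PGL_{n,E_\lambda}$ is a closed immersion landing in $\Inn_{\overline E_\lambda}(\GL_{n,E_\lambda},\bG_\lambda)$; hence $\bG_\lambda^{\ad}(E_\lambda)\subset\Inn_{\overline E_\lambda}(\GL_{n,E_\lambda},\bG_\lambda)(E_\lambda)$ automatically. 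Thus every conjugating element produced by Bruhat--Tits already respects the embedding $\bG_\lambda\hookrightarrow\GL_{n,E_\lambda}$, and the adjusted $\phi_\lambda$ remains an isomorphism of chain representations. Your worry is in the wrong direction: you do not need the normalizer modulo scalars to embed into $\Aut(\bG_\lambda)$, you need $\bG_\lambda^{\ad}(E_\lambda)$ to be inside the normalizer modulo scalars, which it is.
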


\begin{cor}\label{cor1.5}
Let $\rho_\bullet$ be a $\lambda$-adic compatible system of $\pi_1(X)$ as above.
Suppose $V_\lambda$ is absolutely irreducible and $\bG_\lambda$ is connected for all $\lambda$. 
Then there exists a smooth reductive group scheme $\mathcal{G}\subset \GL_{n,\mathcal{O}_{E,S}}$
defined over $\mathcal{O}_{E,S}$ (for some finite $S$) whose generic fiber 
is $\bG\subset\GL_{n,E}$ such that for all $\lambda\in\mathcal{P}_{E,f}^{(p)}\backslash S$,
 the representations $(\mathcal{G}\hookrightarrow \GL_{n,\mathcal{O}_{E,S}})\times\mathcal{O}_\lambda$
and $\mathcal{G}_\lambda\hookrightarrow \GL_{n,\mathcal{O}_\lambda}$
are isomorphic, where $\mathcal G_\lambda$
is the Zariski closure of $\rho_\lambda(\pi_1(X))$ in $\GL_{n,\mathcal O_\lambda}$
after some choice of $\mathcal{O}_\lambda$-lattice in $V_\lambda$. 
\end{cor}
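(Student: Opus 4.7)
The plan is to take $\mathcal{G}$ to be the schematic closure of $\bG\hookrightarrow\GL_{n,E}$ from Corollary~\ref{cor1}(ii), spread it out over $\mathcal{O}_{E,S}$ for a sufficiently large finite $S$, and identify it locally with the Zariski closure of the image of $\rho_\lambda$ by a flatness argument. First, let $\mathcal{G}^{\sim}\subset\GL_{n,\mathcal{O}_E}$ be the schematic closure of $\bG$; it is flat with generic fiber $\bG$. By standard spreading-out for reductive group schemes (cf.\ SGA~3), there is a finite set $S_0\subset\mathcal{P}_{E,f}$ containing the places above $p$ such that $\mathcal{G}:=\mathcal{G}^{\sim}\times_{\mathcal{O}_E}\mathcal{O}_{E,S_0}$ is a smooth closed subgroup scheme of $\GL_{n,\mathcal{O}_{E,S_0}}$ with connected reductive geometric fibers. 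This is our candidate integral model.

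Next I would promote the adelic representation of Corollary~\ref{cor1}(ii) to integrality at cofinitely many places. Since $\pi_1(X)$ is profinite and $\rho_\A^{\bG}:\pi_1(X)\to\bG(\A_E^{(p)})$ is continuous, its image is a compact subgroup of $\bG(\A_E^{(p)})$ in the restricted-product topology; any such subgroup is contained in an open compact subgroup of the form $K_{S_1}\times\prod_{\lambda\notin S_1}\mathcal{G}(\mathcal{O}_\lambda)$ for some finite $S_1\supset S_0$. Setting $S:=S_1$, one obtains $\rho_\lambda^{\bG}(\pi_1(X))\subset\mathcal{G}(\mathcal{O}_\lambda)$ for every $\lambda\in\mathcal{P}_{E,f}^{(p)}\setminus S$.

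For each such $\lambda$, the isomorphism $\phi_\lambda$ of Corollary~\ref{cor1}(ii) is, by the definition of isomorphism of chain representations, induced by an $E_\lambda$-linear isomorphism $T_\lambda:E_\lambda^n\to V_\lambda$ that conjugates $\bG\times_E E_\lambda$ onto $\bG_\lambda$ and satisfies $\rho_\lambda=T_\lambda\circ\rho_\lambda^{\bG}\circ T_\lambda^{-1}$. I would choose the lattice $L_\lambda:=T_\lambda(\mathcal{O}_\lambda^n)\subset V_\lambda$. Because $\rho_\lambda^{\bG}(\pi_1(X))\subset\mathcal{G}(\mathcal{O}_\lambda)$, the image $\rho_\lambda(\pi_1(X))$ stabilizes $L_\lambda$ and is contained in the $\mathcal{O}_\lambda$-points of the closed subgroup scheme $T_\lambda(\mathcal{G}\times\mathcal{O}_\lambda)T_\lambda^{-1}$ of $\GL_{n,\mathcal{O}_\lambda}$ (the integral structure on $\GL(V_\lambda)$ being the one coming from $L_\lambda$). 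Hence $\mathcal{G}_\lambda$, the Zariski closure of $\rho_\lambda(\pi_1(X))$ in $\GL_{n,\mathcal{O}_\lambda}$, is a closed $\mathcal{O}_\lambda$-flat subscheme of $T_\lambda(\mathcal{G}\times\mathcal{O}_\lambda)T_\lambda^{-1}$; the two share the generic fiber $\bG_\lambda$ and are both $\mathcal{O}_\lambda$-flat, so they coincide. Transporting back via $T_\lambda$ then yields the desired isomorphism $(\mathcal{G}\hookrightarrow\GL_{n,\mathcal{O}_{E,S}})\times\mathcal{O}_\lambda\cong(\mathcal{G}_\lambda\hookrightarrow\GL_{n,\mathcal{O}_\lambda})$.

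The step I expect to demand the most care is the integrality in the second paragraph---extracting a single finite $S$ beyond which every local image already lies in the fixed integral model $\mathcal{G}(\mathcal{O}_\lambda)$. This is exactly what the compactness of $\rho_\A^{\bG}(\pi_1(X))$ inside the restricted product $\bG(\A_E^{(p)})$ delivers, so the decisive input is that Corollary~\ref{cor1}(ii) really produces an adelic (rather than merely a product) representation. Once that is in hand, the flatness-plus-common-generic-fiber matching of $\mathcal{G}_\lambda$ with $\mathcal{G}\times\mathcal{O}_\lambda$ is essentially formal.
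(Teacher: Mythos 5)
Your proof is correct and follows essentially the same route as the paper: spread out to a smooth reductive model $\mathcal{G}\subset\GL_{n,\mathcal{O}_{E,S}}$, use Corollary \ref{cor1}(ii) to place the image of each $\rho_\lambda^{\bG}$ (for $\lambda\notin S$) inside $\mathcal{G}(\mathcal{O}_\lambda)$, transport to $V_\lambda$ by a lattice choice, and identify $\mathcal{G}_\lambda$ with $\mathcal{G}\times\mathcal{O}_\lambda$. The only cosmetic difference is that you close the final matching by a direct flatness-and-common-generic-fiber comparison of closed $\mathcal{O}_\lambda$-flat subschemes, whereas the paper invokes the Bruhat--Tits uniqueness of the smooth affine model attached to the hyperspecial maximal compact $\mathcal{G}(\mathcal{O}_\lambda)$; these are equivalent formulations of the same step.
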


\vspace{.1in}
\noindent
For almost all $\lambda$, $\bG(\mathcal{O}_\lambda)$ is a \emph{hyperspecial maximal compact subgroup} 
of $\bG(E_\lambda)$ \cite[$\mathsection3.9.1$]{Ti79}. Hence, 
Corollary \ref{cor1}(i) implies that for almost all $\lambda$, the image $\rho_\lambda(\pi_1(X))$
is contained in hyperspecial maximal compact 
subgroup of $\bG_\lambda(E_\lambda)$ (see Proposition \ref{hyper}). 
Next corollary is about the \emph{$\bG$-valued compatibility} of the system,
motivated by the papers \cite{BHKT19},\cite{Dr18} on Langlands conjectures.
As obtained in \cite[$\mathsection6$]{BHKT19}, the results in \cite[$\mathsection4$]{D'Ad20} 
(\cite[$\mathsection6$]{Ch04} when $X$ is a curve) imply that 
the $E$-compatible system $\rho_\bullet$ (assume connectedness of $\bG_\lambda$), after some finite extension $F/E$, 
factors through an $F$-compatible system  $\rho_\bullet^{\bG^{\sp}}$ of 
$\bG^{\sp}$-representations for some connected split reductive group $\bG^{\sp}$ defined over $F$.
In some situation, we prove that the extension $F/E$ can be omitted.
This shows evidence to the \emph{motivic hope} in \cite[$\mathsection$E]{Dr18}
that the Tannakian categories $\mathcal{T}_\lambda(X)$ of semisimple (weight $0$) $E_\lambda$-representations of $\pi_1(X)$,
at least for all $\lambda$ not extending $p$, should come from a canonical category $\mathcal{T}(X)$ over $E$ 
$$\mathcal{T}(X)\otimes_E E_\lambda\stackrel{\approx}{\rightarrow}\mathcal{T}_\lambda(X)$$
in a compatible way (see \cite[Thm. 1.4.1]{Dr18}).
The definition of an $E$-compatible system of $\bG$-representations will be recalled in $\mathsection3.2$.
Let $\pi_\lambda:\A_E^{(p)}\to E_\lambda$ be the natural surjection to the $\lambda$-component.

\begin{cor}\label{cor2}
Let %$X$ be a curve and 
$\rho_\bullet$ be a $\lambda$-adic compatible system of $\pi_1(X)$ as above.
Suppose $V_\lambda$ is absolutely irreducible and $\bG_\lambda$ is connected for all $\lambda$. 
Let $\bG\hookrightarrow\GL_{n,E}$ be the $E$-form and $\rho_\lambda^{\bG}$ be the adelic representation in Corollary \ref{cor1}(ii). 
Let $N_{\GL_{n,E}}\bG$ the normalizer of $\bG$ in $\GL_{n,E}$. 
Then for each $\lambda\in \mathcal{P}_{E,f}^{(p)}$, there exists (a change of coordinates)
$\beta_\lambda\in (N_{\GL_{n,E}}\bG)(E_\lambda)$ such that the system 
$$\rho_\bullet^{\bG}:=\{\rho_\lambda^{\bG}:\pi_1(X)\stackrel{\rho_\A^{\bG}}{\rightarrow} 
\bG(\A_E^{(p)})\stackrel{\pi_\lambda}{\rightarrow} \bG(E_\lambda)
\stackrel{\beta_\lambda}{\rightarrow} \bG(E_\lambda)\}_{\lambda\in\mathcal{P}_{E,f}^{(p)}}$$ 
is an $E$-compatible system of $\bG$-representations when one of the following holds.
\begin{enumerate}[(i)]
\item The group $\bG_\lambda$ is split for all $\lambda$.
\item The outer automorphism group of the derived group $\bG^{\der}\times_E\overline E$ is trivial ($\beta_\lambda=id$).
\end{enumerate}
Hence, for any $E$-representation $\alpha:\bG\to\GL_{m,E}$, the system of $m$-dimensional $\lambda$-adic semisimple representations 
$\{\alpha\circ\rho_\lambda^{\bG}\}_{\lambda\in\mathcal{P}_{E,f}^{(p)}}$ is also $E$-compatible.
\end{cor}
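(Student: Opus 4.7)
The plan is to identify, for each $\lambda$, a correction $\beta_\lambda\in(N_{\GL_{n,E}}\bG)(E_\lambda)$ such that the twisted system $\rho_\bullet^{\bG}$ becomes an $E$-compatible system of $\bG$-representations. By the definition of such compatibility (to be recalled in $\mathsection3.1$), this amounts to showing that for every closed point $x\in|X|$ the semisimple Frobenius conjugacy class of $\rho_\lambda^{\bG}(\Fr_x)$ in $\bG(\overline{E_\lambda})$ descends to a single $E$-rational point of the adjoint quotient $\bG//\bG$, independent of $\lambda$. The last sentence of the corollary will then follow immediately, since for any $\alpha:\bG\to\GL_{m,E}$ the trace $\mathrm{tr}\circ\alpha$ is an $E$-regular function on $\bG//\bG$, so its values at Frobenius are $E$-rational and $\lambda$-independent, which by Chebotarev density characterises the $E$-compatible system $\{\alpha\circ\rho_\lambda^{\bG}\}$.

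The main input I would draw on, beyond Corollary \ref{cor1}, is Chin's Theorem \ref{Chin} together with the deeper arguments of \cite[$\mathsection6$]{Ch04} (unpacked in \cite[$\mathsection6$]{BHKT16}): after a finite extension $F/E$ one has an honest $F$-compatible system $\rho_\bullet^{\bG^{\sp}}$ of $\bG^{\sp}$-representations whose Frobenius classes lie in $(\bG^{\sp}//\bG^{\sp})(F)$ and are $\lambda$-independent. Chaining $\phi_\lambda$ from Corollary \ref{cor1}(ii) with $f_{F_\lambda}^{-1}$ from Theorem \ref{Chin}(ii) produces, for each $\lambda$, an $F_\lambda$-isomorphism of chain representations
$$g_\lambda:(\bG\hookrightarrow\GL_{n,E})\times_E F_\lambda \xrightarrow{\sim} (\bG^{\sp}\hookrightarrow\GL_{n,F})\times_F F_\lambda$$
conjugating $\rho_\lambda^{\bG}$ into $\rho_\lambda^{\bG^{\sp}}$. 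Any two such $g_\lambda$ differ by an element of $(N_{\GL_{n,E}}\bG)(F_\lambda)/\bG(F_\lambda)$, and the goal is to make this ambiguity disappear on the level of adjoint quotients.

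For case (i), I would observe that split-ness of every $\bG_\lambda$ combined with Corollary \ref{cor1}(ii) makes $\bG\times_E E_\lambda$ split for all $\lambda$; by Chebotarev applied to the splitting field of $\bG$ this forces $\bG$ itself to be split over $E$. Then pinning-uniqueness of split reductive groups would let me identify $\bG$ with a twist of $\bG^{\sp}\times_F E$, the necessary coordinate change being absorbed into $\beta_\lambda\in(N_{\GL_{n,E}}\bG)(E_\lambda)$, and Chin's $F$-rational Frobenius classes would pull back directly to $(\bG//\bG)(E)$. For case (ii), I would exploit the hypothesis $\Out(\bG^{\der}\times_E\overline E)=1$ together with absolute irreducibility: by Schur's lemma $Z(\bG)$ lies in the centre of $\GL_n$, so conjugation by any $\beta\in N_{\GL_{n,E}}\bG$ acts trivially on $Z(\bG)$, and by the outer-automorphism hypothesis it acts by an inner automorphism on $\bG^{\der}$; together these force the induced action on $\bG//\bG$ to be trivial. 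Hence the induced map on adjoint quotients $\overline{g_\lambda}:(\bG//\bG)\times_E F_\lambda\xrightarrow{\sim}(\bG^{\sp}//\bG^{\sp})\times_F F_\lambda$ is independent of the choice of $g_\lambda$, so canonical and in particular $\Gal(F_\lambda/E_\lambda)$-equivariant; the local comparisons should then glue to a $\Gal(F/E)$-equivariant $F$-isomorphism, and Galois descent will transport Chin's $F$-rational Frobenius class to the desired $E$-rational one, so $\beta_\lambda=1$ suffices.

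The hardest part will be precisely this descent from $F$ down to $E$: the $g_\lambda$ are only $F_\lambda$-defined and generally not $\Gal(F/E)$-equivariant, and it is exactly the outer-automorphism hypothesis in case (ii), or split-ness plus the twist $\beta_\lambda$ in case (i), that rigidifies them on the adjoint quotient and makes Galois descent from $F$ to $E$ available.
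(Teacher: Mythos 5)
Your overall strategy matches the paper's: reduce compatibility to showing the semisimple Frobenius conjugacy classes in $\bG$ are $E$-rational and $\lambda$-independent, use Chin's $F$-compatible $\bG^{\sp}$-system, and control the ambiguity of the comparison maps $g_\lambda$ via the (inner/outer) automorphism groups. Case (ii) is essentially the paper's argument. But case (i) contains a genuine gap.

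In case (i) you assert that splitness of $\bG\times_E E_\lambda$ for all $\lambda\in\mathcal{P}_{E,f}^{(p)}$ plus Chebotarev ``forces $\bG$ itself to be split over $E$.'' That step is false. Chebotarev applied to the splitting field of the $*$-action only shows that $\Gal(\overline E/E)$ acts trivially on the Dynkin diagram, i.e.\ that $\bG$ is an \emph{inner} form of the split group $\bG^{\sp}$. A non-trivial inner form can be split at a density-one set of places: for instance $\GL_1(D)$ for a quaternion algebra $D/\Q$ ramified only at $\{2,\infty\}$ is split at every finite $p\neq 2$. This is exactly the situation here, since in the proof of Metatheorem~\ref{meta1} the $E$-form $\bG$ is produced from Borovoi's exact sequence with complete freedom at a place $\lambda'$ above $p$ (and nothing forces it to be split or unramified there, nor at the archimedean places). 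So you cannot identify $\bG$ with $\bG^{\sp}\times_F E$ by ``pinning-uniqueness of split groups'', and your descent of Chin's $F$-rational Frobenius classes does not go through. The paper's argument for (i) is different in kind: it never asserts anything global about $\bG$, but uses splitness of $\bG_\lambda$ over $E_\lambda$ together with absolute irreducibility to produce a \emph{local} corrector $\beta_\lambda\in N_{\GL_{n,E}}(\bG)(E_\lambda)\cong\Inn_{\overline E_\lambda}(\GL_{n,E_\lambda},\bG_\lambda)(E_\lambda)$ that kills the image of the comparison automorphism $\Phi_\lambda$ in $\Out(\bG\times\C)$, and then concludes by the elementary observation that a quantity lying in $\overline E$ and in $E_\lambda$ for all $\lambda\in\mathcal{P}_{E,f}^{(p)}$ must lie in $E$.

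A secondary (fixable) issue: in case (ii) the phrase ``the local comparisons should then glue to a $\Gal(F/E)$-equivariant $F$-isomorphism'' is not how this can work — the $\overline{g_\lambda}$ live over different completions $F_\lambda$ and cannot be literally glued. The correct formulation is that a single canonical $\overline E$-isomorphism $\bG//\bG\to\bG^{\sp}//\bG^{\sp}$ exists (by uniqueness under the trivial-outer hypothesis) and is hence Galois-equivariant, so defined over $E$; or simply argue as the paper does, that each regular function value $f_t$ lies in $\overline E\cap E_\lambda$ for all $\lambda$ and hence in $E$.
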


\paragraph{} 
Denote by $\mathcal{P}_{E,p}$ the set 
of finite places of $E$ extending $p$.
Let $\Q_{p^k}$ be a degree $k$ unramified extension of $\Q_p$, $v\in \mathcal{P}_{E,p}$, and 
$E_{v,p^k}$ the composed fields $E_v\cdot\Q_{p^k}$. Let  $\rho_\bullet$ be in Theorem \ref{thm1}.
The \emph{semisimple crystalline companion object}  of $\rho_\bullet$ at $v$ 
(whose existence\footnote{Recent works of Kedlaya \cite{Ke22b,Ke22c} establish the existence of crystalline companion
when $X$ is smooth.} is conjectured by Deligne \cite[Conjecture 1.2.10]{De80})
is an object $M_v$ in the Tannakian category $\mathrm{\textbf{F-Isoc}}^{\dagger}(X)\otimes E_{v,q}$ 
of \emph{overconvergent $F$-isocrystals of $X$ with coefficients in $E_{v,q}$} (see \cite[$\mathsection2$]{Ke22a} for definition). 
Any $t\in X(\F_{q^k})$ induces 
a fiber functor to the category of vector spaces over $E_{v,q^k}$ given by the composition
$$w_t: \mathrm{\textbf{F-Isoc}}^{\dagger}(X)\otimes E_{v,q} 
\to \mathrm{\textbf{F-Isoc}}^{\dagger}(x)\otimes E_{v,q^k}\to \mathrm{\textbf{Vec}}_{E_{v,q^k}},$$
where the first one is via the pull-back of $i:t\to X$ and the second one is the forgetful functor.
The image $V_{t,v}:=w_t(M_v)$ is an $n$-dimensional vector space.
The Tannakian group of the subcategory generated by $M_v$ with respect to $w_t$
can be identified as a reductive subgroup $\bG_{t,v}\subset\GL_{V_{t,v}}\cong\GL_{n,E_{v,q^k}}$
and is called the algebraic monodromy group of $(M_v,w_t)$.
For different closed points $t$ and $t'$ in $X(\F_{q^k})$, $\bG_{t,v}$ and $\bG_{t',v}$ differ by an inner twist \cite[Theorem 3.2]{DS82}. 
Let $\lambda$ be a finite place of $E$ not extending $p$.
The absolute root data of $\bG_{t,v}^\circ$ and $\bG_\lambda^\circ$ 
(resp. the component groups of $\bG_{t,v}$ and $\bG_\lambda$) are proven to be isomorphic independently by
Pal \cite{Pa15}  and D'Addezio \cite{D'Ad20} (relying on \cite{La02} and \cite{Ab18}). 
Moreover, given the closed point $t$ one can define the \emph{Frobenius tori} $\bT_{t,v}$ 
in $\bG_{t,v}$ (see \cite[$\mathsection4.2$]{D'Ad20}) and 
$\bT_{\bar t,\lambda}$ in $\bG_\lambda$ (up to conjugation, see $\mathsection3.3$).
Assume the crystalline companions of $\rho_\bullet$ exist for all $v\in\mathcal{P}_{E,p}$ and certain conditions, 
we prove an $E$-rationality result (existence + uniqueness) for 
the above algebraic monodromy groups at all finite places of $E$.

\begin{thm}\label{thmcrys}
Let $\rho_\bullet:=\{\rho_\lambda:\pi_1(X)\to\GL(V_\lambda)\}_{\lambda\in\mathcal{P}_{E,f}^{(p)}}$ 
be an $E$-compatible system of $n$-dimensional $\lambda$-adic semisimple representations of $\pi_1(X)$ that is pure of integer weight $w$
and $t\in X(\F_{q^k})$ a closed point of $X$.
Suppose the semisimple crystalline companion object $M_v$ of $\rho_\bullet$ exists in $\mathrm{\textbf{F-Isoc}}^{\dagger}(X)\otimes E_{v,q}$ for each $v\in \mathcal{P}_{E,p}$
and the following conditions hold.
\begin{enumerate}[(a)]
\item The Frobenius torus $\bT_{\bar t,\lambda}$ is a maximal torus of $\bG_\lambda$ for some $\lambda$.
\item For all $v\in \mathcal{P}_{E,p}$, the field $\Q_{q^k}$ is contained in $E_v$.
\item The number field $E$ has at least one real place\footnote{This condition is 
needed to ensure that the $E$-torus in Main Theorem \ref{meta2}(d) is anisotropic at some place $v$ of $E$.}.
\end{enumerate} 
Then the following assertions hold.
\begin{enumerate}[(i)]
\item There exists a chain (of a connected reductive group together with a maximal torus) $\bT\subset\bG$ 
defined over $E$ that is the unique common $E$-form of the chains
$\bT_{\bar t,\lambda}\subset\bG_\lambda^\circ$ 
for all $\lambda\in\mathcal{P}_{E,f}^{(p)}$
and the chains $\bT_{t,v}\subset\bG_{t,v}^\circ$ for all $v\in\mathcal{P}_{E,p}$.
\item If moreover $\bG_\lambda^\circ\hookrightarrow \GL_{V_\lambda}$ is absolutely irreducible for some $\lambda$, 
 then there exist
an inner form $\GL_{m,D}$ (for some division algebra $D$ over $E$) of $\GL_{n,E}$ over $E$ containing a chain of subgroups 
$\bT\subset\bG$ such that $\bT\subset\bG\hookrightarrow \GL_{m,D}$ 
is the unique common $E$-form of the chain representations $\bT_{\bar t,\lambda}\subset\bG_\lambda\hookrightarrow\GL_{V_\lambda}$
for all $\lambda\in\mathcal{P}_{E,f}^{(p)}$ and the chain representations 
$\bT_{t,v}\subset\bG_{t,v}\hookrightarrow\GL_{V_{t,v}}$
for all $v\in\mathcal{P}_{E,p}$.
When $E$ has exactly one real place, we have $\GL_{m,D}\cong\GL_{n,E}$.
\end{enumerate}
\end{thm}

\subsubsection{Characteristic zero}
\paragraph{}
It turns out that the strategy for proving Theorem \ref{thm1} retains
in characteristic zero if ordinary representations enter the picture. 
This part is influenced by the work of Pink \cite{Pi98}.
To keep things simple, we only consider the $\Q$-compatible system (with exceptional set $S$) 
of $n$-dimensional $\ell$-adic Galois representations
$V_\ell:=H^w(Y_{\overline K},\Q_\ell)$:
\begin{equation}\label{cs}
\rho_\bullet:=\{\rho_\ell:\Gal(\overline K/K)\to \GL(V_\ell)\}_{\ell\in\mathcal{P}_{\Q,f}},
\end{equation}
arising from a smooth projective variety $Y$ defined over a number field $K$.
The set $S$ consists of the finite places of $K$ such that $Y$ does not have good reduction.
Let $\bG_\ell$ be the algebraic monodromy group at $\ell$.
The Grothendieck-Serre semisimplicity conjecture asserts that 
the representation $\rho_\ell$ is semisimple (see \cite{Tat65}), which is equivalent to the algebraic group
$\bG_\ell^\circ$ being reductive.
Choose coordinates for $V_\ell$ and identify $\bG_\ell$ as a subgroup of $\GL_{n,\Q_\ell}$ for all $\ell$.
Embed $\Q_\ell$ into $\C$ for all $\ell$.

Let $v\in\mathcal{P}_{K,f}\backslash S$ with  $p:=p_v$.
Let $K_v$ be the completion of $K$ at $v$, $\mathcal{O}_v$ the ring of integers, 
and $Y_v$ the special fiber of a smooth model of $Y$ over $\mathcal O_v$.
The local representation $V_p=H^w(Y_{\overline K},\Q_p)$ of $\Gal(\overline{K}_v/K_v)$ is \emph{crystalline} 
and corresponds, via a mysterious functor of Fontaine \cite{Fo79,Fo82,Fo83}, to the crystalline cohomology 
$M_v:=H^w(Y_v/\mathcal O_v)\otimes_{\mathcal O_v}K_v$ \cite{FM87}, \cite{Fa89}.
The local representation $V_p$ is said to be \emph{ordinary} if the Newton and Hodge polygons of $M_v$ coincide \cite{Ma72}.
This notion originates from ordinary abelian varieties defined over finite fields.
It is conjectured by Serre that if $K$ is large enough, then
the set of places $v$ in $\mathcal{P}_{K,f}$
for which the local representations $V_p$ are ordinary
is of Dirichlet density one,
for abelian varieties of low dimensions, see Serre \cite{Se98}, Ogus \cite{O82}, Noot \cite{No95,No00}, Tankeev \cite{Ta99};
for abelian varieties in general, see Pink \cite{Pi98}; and for K3 surfaces, see Bogomolov-Zarhin \cite{BZ09}.

\begin{thm}\label{thm2}
Let $\rho_\bullet$ be the $\Q$-compatible system \eqref{cs} arising from 
the  $\ell$-adic cohomology (of degree $w$) of a smooth projective variety $Y$
defined over a number field $K$. 
Suppose $\bG_\ell$ is connected for all $\ell$ and the following conditions hold.
\begin{enumerate}[(a)]
\item (Ordinariness): The set of places $v$ in $\mathcal{P}_{K,f}$
for which the local representations $V_p$ of $\Gal(\overline{K}_v/K_v)$ are ordinary is of positive Dirichlet density.
\item ($\ell$-independence absolutely): 
There exists a connected reductive subgroup $\bG_\C$ of $\GL_{n,\C}$ such that 
the representations $\bG_\C\hookrightarrow\GL_{n,\C}$ and 
$(\bG_\ell\hookrightarrow\GL_{n,\Q_\ell})\times_{\Q_\ell}\C$ are isomorphic for all $\ell$.
\item (Invariance of roots): Let $\bT_\C^{\ss}$ be a maximal torus of the derived group $\bG_\C^{\der}$. Then the normalizer
$N_{\GL_{n,\C}}(\bT_\C^{\ss})$ is invariant on the roots of $\bG_\C^{\der}$ with respect to $\bT_\C^{\ss}$.
\end{enumerate}
Then the following assertions hold.
\begin{enumerate}[(i)]
\item There exists a connected reductive group $\bG$ defined over $\Q$ such that 
$\bG\times_\Q \Q_\ell\cong \bG_\ell$ for all $\ell$. In particular, $\bG_\ell$ is unramified for $\ell\gg0$.
\item If moreover $\bG_\C$ is irreducible on $\C^n$, then there exists a connected reductive subgroup $\bG$ of $\GL_{n,\Q}$ 
such that $\bG\hookrightarrow \GL_{n,\Q}$ is a common $\Q$-form of the representations $\bG_\ell\hookrightarrow\GL_{n,\Q_\ell}$
for all $\ell$.
\end{enumerate}
\end{thm}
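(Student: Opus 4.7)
The plan is to mimic the proof of Theorem \ref{thm1}, replacing Chin's Theorem \ref{Chin}(i)--(iii) by characteristic-zero analogues built from Serre's Frobenius tori (Theorem \ref{thm0}) and the ordinariness hypothesis (a), in the spirit of Pink \cite{Pi98}. In positive characteristic Chin supplies a common $E$-form $\bT$ of the maximal tori, a common split $F$-form of the chain $\bT\subset\bG\hookrightarrow\GL_n$ over a finite extension $F/E$, and rigidity of the torus descent; these three ingredients are then assembled into a single $E$-form by the inner/outer cocycle argument of \cite{Hu18}. In characteristic zero, hypotheses (a), (b), (c) are tailored precisely to deliver the analogues of these ingredients together with the final descent.

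First I would use Serre's method of Frobenius tori together with (a) to construct a subtorus $\bT\subset\GL_{n,\Q}$ such that $\bT\times_\Q\Q_\ell$ is a maximal torus of $\bG_\ell$ for \emph{every} prime $\ell$: Serre already supplies this at $\ell\neq p_v$ for each fixed Frobenius at $v$, and ordinariness on a density-one set of places, combined with the crystalline comparison and the Newton=Hodge equality, lets one extract matching Frobenius tori even at $\ell=p_v$, following Pink. Hypothesis (b) then provides, after passage to a sufficiently large finite Galois extension $F/\Q$, a chain $\bT^{\sp}\subset\bG^{\sp}\hookrightarrow\GL_{n,F}$ with chain isomorphisms
\begin{equation*}
f_{F_{\tilde\ell}}:(\bT^{\sp}\subset\bG^{\sp}\hookrightarrow\GL_{n,F})\times_F F_{\tilde\ell}\stackrel{\cong}{\to}(\bT_\ell\subset\bG_\ell\hookrightarrow\GL_{n,\Q_\ell})\times_{\Q_\ell}F_{\tilde\ell}
\end{equation*}
for every place $\tilde\ell$ of $F$ above $\ell$, rigidified on the tori by $\bT\otimes_\Q F$ from the previous step; this is the characteristic-zero analogue of Theorem \ref{Chin}(ii)--(iii).

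The main obstacle is the descent from $F$ to $\Q$. The data above exhibit $\bG^{\sp}\hookrightarrow\GL_{n,F}$ as a twist of a hypothetical $\Q$-form by a Galois cocycle in $H^1(\Gal(F/\Q),\Aut(\bG^{\sp}\hookrightarrow\GL_{n,F}))$, which splits into an inner and an outer part. Following \cite{Hu18} and the strategy of Theorem \ref{thm1}, the inner part is trivialized locally at each $\ell$ by changing coordinates inside $\GL_{n,\Q_\ell}$; the rigidity of $\bT$ lets these local adjustments be glued into a global $\Q$-form $\bG$ of part (i), and the smooth reductive $\Z[1/N]$-model of $\bG$ yields the unramifiedness of $\bG_\ell$ for $\ell\gg 0$. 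For part (ii), the outer part of the cocycle is controlled by hypothesis (c): the invariance of the roots of $\bG_\C^{\der}$ under $N_{\GL_{n,\C}}(\bT_\C^{\ss})$ forces every outer automorphism appearing in the cocycle to be realized by $\GL_n$-conjugation, so that under absolute irreducibility the chain cocycle becomes inner and can be trivialized simultaneously with the inner part, giving the common $\Q$-form in (ii).
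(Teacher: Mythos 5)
Your plan correctly identifies the broad strategy (verify the metatheorem hypotheses from Frobenius tori, ordinariness, and Pink's crystalline techniques), and your sketch of how ordinariness yields the common $\Q$-form $\bT$ of formal characters at every $\ell$ including $\ell=p$ is close to what the paper does. However, there are two genuine gaps.

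First, you are trying to import the Metatheorem~\ref{meta1} machinery that underlies Theorem~\ref{thm1}, but Theorem~\ref{thm2} uses Metatheorem~\ref{meta2}, and the difference is forced: in characteristic $p$ the index set is $\mathcal{P}_{E,f}^{(p)}$, which omits all places above $p$, and the proof of Metatheorem~\ref{meta1} crucially uses a ``free'' non-Archimedean place $\lambda'$ above $p$ at which the local-global obstruction for the adjoint group is absorbed via Proposition~\ref{local-globalss}. In characteristic zero the index set is all of $\mathcal{P}_{\Q,f}$, so there is no free place. The replacement, in Metatheorem~\ref{meta2}, is the anisotropic-torus condition (d): the argument shows that the global cocycle $\mu$ lies in the image of the injection $\mathrm{Res}_{\bT^{\sp}}$, which is equivalent to the vanishing of a class $\Delta(\mu)\in H^2(\Q,{}_\mu(\bT^{\sp}/\bC))$; local vanishing is automatic at finite places and holds at $\R$ by anisotropy, and the Hasse principle $\Sh^2=0$ for the twisted torus (Proposition~\ref{local-globaltorus}) then gives global vanishing. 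You never mention condition (d) nor the purity argument (Proposition~\ref{anisoreal}) that supplies it; without it your ``gluing'' step has no mechanism.

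Second, you treat the rigidity condition (c) of the metatheorem as automatic once the tori share the $\Q$-form $\bT$ (``rigidified on the tori by $\bT\otimes_\Q F$''). It is not: rigidity asks that the chain isomorphisms $f_{\overline\Q_\ell}$ restricted to $\bT^{\sp}$ admit a common $\overline\Q$-form, and the paper has to prove this via Proposition~\ref{c2c1} from the invariance-of-roots condition (your hypothesis (c)) \emph{and} a common $\Q$-form of formal bi-characters (c'-bi). For part~(ii) (irreducible case) (c'-bi) is cheap (Proposition~\ref{Formal}), but for part~(i) the paper needs a substantial auxiliary construction involving the abelian part of the compatible system and CM abelian varieties (Proposition~\ref{CM}) to compare semisimple ranks and produce the $\Q$-form of the pair $\bT^{\ss}\subset\bT$. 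Your proposal omits this entirely, so the descent for part~(i) is not established.
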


\begin{remark}
The conditions \ref{thm2}(a),(b),(c) are to be compared with Theorem \ref{Chin}(i),(ii),(iii).
Since Theorem \ref{thm0}(ii) only gives a common $\Q$-form of formal characters for all but one $\ell$,
the condition (a) is needed if one aims at a $\Q$-common form for all $\ell$. Given \ref{thm2}(a) and \ref{Chin}(i),
then \ref{thm2}(b) and \ref{Chin}(ii) are easily seen to be equivalent ($E=\Q$).
The rigidity assertion \ref{Chin}(iii) is not known to hold in characteristic zero, 
and is now replaced with the invariance of roots condition \ref{thm2}(c), which 
holds if $\bG_\C^{\der}$ is of certain root system \cite[Thm. A1, A2]{Hu20}. 
\end{remark}

\begin{remark}
If $\rho_\ell$ is abelian at one $\ell$, then the rationality of $\bG_\ell\hookrightarrow \GL_{n,\Q_\ell}$ 
for all $\ell$ is obtained by Serre
via Serre group $\bS_{\mathfrak{m}}$ \cite{Se98}.
\end{remark}

\paragraph{}
Suppose $\bG_\ell$ is connected reductive for all $\ell\in\mathcal{P}_{\Q,f}$.\\

\noindent\textbf{Hypothesis H.} For $\ell\gg0$, the image of $\rho_\ell$ is contained in 
a hyperspecial maximal compact subgroup of $\bG_\ell(\Q_\ell)$.\\

\noindent This hypothesis follows from a Galois maximality conjecture of Larsen \cite{Lar95} (see Theorem \ref{QH}),
which has been established for type A representations \cite{HL16}, abelian varieties and 
hyper-\"Kahler varieties (degree $w=2$) \cite{HL20}.
Further assuming the hypothesis,
we obtain the following corollaries which are analogous to Corollaries \ref{cor1} and \ref{cor1.5}.

\begin{cor}\label{cor2.1}
Let $\rho_\bullet$ be an $\ell$-adic compatible system of $\Gal(\overline K/K)$ as above.
Suppose $\bG_\ell$ is connected for all $\ell$ and Hypothesis H holds. 
Then the following assertions hold.
\begin{enumerate}[(i)]
\item There exist a connected reductive group $\bG$ defined over $\Q$ and an isomorphism 
$\bG\times_\Q \Q_\ell \stackrel{\phi_\ell}{\rightarrow}\bG_\ell$ for each $\ell\in\mathcal{P}_{\Q,f}$
such that the direct product representation 
$$\prod_{\ell\in\mathcal{P}_{\Q,f}}\rho_\ell:\Gal(\overline K/K)\to \prod_{\ell\in\mathcal{P}_{\Q,f}}\bG_\ell(\Q_\ell)$$
 factors through a $\bG$-valued adelic representation via $\phi_\ell$:
$$\rho_{\A}^{\bG}:\Gal(\overline K/K)\to \bG(\A_\Q).$$
\item If the representations $V_\ell$ are absolutely irreducible, then there exist 
 a connected reductive subgroup $\bG\subset\GL_{n,\Q}$ and an isomorphism of representations 
$(\bG\hookrightarrow \GL_{n,\Q})\times_\Q \Q_\ell \stackrel{\phi_\ell}{\rightarrow}(\bG_\ell\hookrightarrow\GL_{V_\ell}$) 
for each $\ell\in\mathcal{P}_{\Q,f}$
such that the direct product representation 
$$\prod_{\ell\in\mathcal{P}_{\Q,f}}\rho_\ell:
\Gal(\overline K/K)\to \prod_{\ell\in\mathcal{P}_{\Q,f}}\bG_\ell(\Q_\ell)
\subset\prod_{\ell\in\mathcal{P}_{\Q,f}}\GL_n(\Q_\ell)$$ factors through 
a $\bG$-valued adelic representation via $\phi_\ell$:
$$\rho_{\A}^{\bG}:\Gal(\overline K/K)\to \bG(\A_\Q)\subset \GL_{n,\Q}(\A_\Q).$$
\end{enumerate}
\end{cor}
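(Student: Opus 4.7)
The plan is to run the adelic-assembly argument from the proof of Corollary \ref{cor1} in the characteristic-zero setting, with Theorem \ref{thm1} replaced by Theorem \ref{thm2} and with Hypothesis H supplying the integrality input that in positive characteristic is automatic for compatible systems over a curve. For part (i), first apply Theorem \ref{thm2}(i) to obtain a connected reductive $\Q$-group $\bG$ together with isomorphisms $\phi_\ell:\bG\times_\Q\Q_\ell\cong\bG_\ell$ for every $\ell$, and spread $\bG$ to a smooth reductive group scheme $\mathcal G$ over $\Z[1/N]$ for some integer $N$. Then $\mathcal G(\Z_\ell)\subset\bG(\Q_\ell)$ is a hyperspecial maximal compact for every $\ell\nmid N$, and these are the local factors defining the restricted-product group $\bG(\A_\Q)$. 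The remaining task is to adjust each $\phi_\ell$ so that $\phi_\ell^{-1}(\rho_\ell(\Gal(\overline K/K)))\subset \mathcal G(\Z_\ell)$ for $\ell\gg 0$, after which the product map $g\mapsto(\phi_\ell^{-1}\rho_\ell(g))_\ell$ factors through $\bG(\A_\Q)$.

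The mechanism is Bruhat-Tits theory. Hypothesis H furnishes, for $\ell\gg 0$, a hyperspecial maximal compact $K_\ell^{\mathrm{hyp}}$ of $\bG_\ell(\Q_\ell)$ containing the image of $\rho_\ell$; its preimage $\phi_\ell^{-1}(K_\ell^{\mathrm{hyp}})$ is then hyperspecial in $\bG(\Q_\ell)$. Since any two hyperspecial maximal compacts of a connected reductive group over $\Q_\ell$ lie in a single $\bG^{\ad}(\Q_\ell)$-orbit, I would choose $\iota_\ell\in\bG^{\ad}(\Q_\ell)$ conjugating $\phi_\ell^{-1}(K_\ell^{\mathrm{hyp}})$ onto $\mathcal G(\Z_\ell)$ and post-compose $\phi_\ell$ with the corresponding inner automorphism of $\bG_{\Q_\ell}$. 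After this adjustment $\phi_\ell^{-1}(\rho_\ell(\Gal(\overline K/K)))$ is contained in $\mathcal G(\Z_\ell)$, and the desired $\rho_\A^{\bG}$ is obtained.

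Part (ii) uses Theorem \ref{thm2}(ii) to supply a common $\Q$-form $\bG\hookrightarrow\GL_{n,\Q}$ and isomorphisms of pairs $\phi_\ell$, each implemented by conjugation by some $\tau_\ell\in\GL_n(\Q_\ell)$; the available freedom is to modify $\tau_\ell$ on the right by an element of $N_{\GL_{n,\Q}}(\bG)(\Q_\ell)$, preserving the pair of subgroups while changing $\phi_\ell$ by the induced automorphism of $\bG_{\Q_\ell}$. The main obstacle, absent in part (i), is that the Bruhat-Tits adjustment lives a priori in $\bG^{\ad}(\Q_\ell)$ and need not come from $\bG(\Q_\ell)$, let alone be realized by a normalizer element. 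Under absolute irreducibility, however, Schur's lemma gives $Z_{\GL_n}(\bG)=\G_m$, so the preimage in $N_{\GL_{n,\Q}}(\bG)$ of $\bG^{\ad}\subset\Aut(\bG)$ is a central $\G_m$-extension of $\bG^{\ad}$, and Hilbert 90 ensures that it is split on $\Q_\ell$-points. Thus every $\iota_\ell\in\bG^{\ad}(\Q_\ell)$ lifts to $N_{\GL_{n,\Q}}(\bG)(\Q_\ell)$, and the argument of part (i) runs through while preserving the common embedding into $\GL_{n,\Q}$.
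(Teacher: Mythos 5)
Your proposal is correct and takes essentially the same route as the paper: the paper reduces Corollary~\ref{cor2.1} to the argument of Corollary~\ref{cor1} (with Theorem~\ref{thm2} in place of Theorem~\ref{thm1} and Hypothesis~H in place of Proposition~\ref{hyper}, as flagged in Remark~\ref{hyperremark}), and that argument is exactly the Bruhat--Tits conjugation of hyperspecial maximal compacts that you describe. Your Hilbert~90 lifting step for part~(ii) is what the paper leaves implicit in the assertion that $\bG_\lambda^{\ad}(E_\lambda)\subset\Inn_{\overline E_\lambda}(\GL_{n,E_\lambda},\bG_\lambda)(E_\lambda)$ yields conjugations realizable inside $\GL_n(E_\lambda)$.
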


\begin{cor}\label{cor2.2}
Let $\rho_\bullet$ be an $\ell$-adic compatible system of $\Gal(\overline K/K)$ as above.
Suppose $V_\ell$ is absolutely irreducible, $\bG_\ell$ is connected for all $\ell$,
and Hypothesis H holds. 
Then there exists a smooth reductive group scheme $\mathcal{G}\subset \GL_{n,\Z_{S}}$
defined over $\Z_{S}$ (for some finite $S\subset\mathcal{P}_{\Q,f}$) whose generic fiber 
is $\bG\subset\GL_{n,\Q}$ such that for all $\ell\in\mathcal{P}_{\Q,f}\backslash S$,
 the representations $(\mathcal{G}\hookrightarrow \GL_{n,\Z_{S}})\times\Z_\ell$
and $\mathcal{G}_\ell\hookrightarrow \GL_{n,\Z_\ell}$
are isomorphic, where $\mathcal G_\ell$
is the Zariski closure of $\rho_\ell(\Gal(\overline K/K))$ in $\GL_{n,\Z_\ell}$
after some choice of $\Z_\ell$-lattice in $V_\ell$. 
\end{cor}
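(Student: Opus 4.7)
The plan is to derive Corollary \ref{cor2.2} from Corollary \ref{cor2.1}(ii) by combining a spreading-out argument with a scheme-theoretic closure computation. First I would apply Corollary \ref{cor2.1}(ii) to produce the connected reductive subgroup $\bG\subset\GL_{n,\Q}$, the isomorphisms $\phi_\ell:(\bG\hookrightarrow\GL_{n,\Q})\times_\Q\Q_\ell\stackrel{\cong}{\to}(\bG_\ell\hookrightarrow\GL_{V_\ell})$, and the continuous adelic representation $\rho_\A^\bG:\Gal(\overline K/K)\to\bG(\A_\Q)\subset\GL_{n,\Q}(\A_\Q)$.

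Next I would spread $\bG\subset\GL_{n,\Q}$ out over $\Z$ by taking $\mathcal{G}_\Z$ to be the scheme-theoretic closure of $\bG$ in $\GL_{n,\Z}$, a flat closed $\Z$-subgroup scheme with generic fiber $\bG$. By generic smoothness together with standard spreading-out results for reductive group schemes, there is a finite set $S_0\subset\mathcal{P}_{\Q,f}$ such that $\mathcal{G}:=\mathcal{G}_\Z\times_\Z\Z_{S_0}\subset\GL_{n,\Z_{S_0}}$ is smooth affine with connected reductive fibers. Since $\bG(\A_\Q)=\mathcal{G}(\A_\Q)$ is the restricted direct product of the $\bG(\Q_\ell)$ with respect to the compact open subgroups $\mathcal{G}(\Z_\ell)$ for $\ell\notin S_0$, continuity of $\rho_\A^\bG$ places $\rho_\ell^\bG(\Gal(\overline K/K))$ inside $\mathcal{G}(\Z_\ell)$ at all but finitely many places; enlarging $S_0$ to some $S$, this inclusion holds for every $\ell\notin S$. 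For such $\ell$ I would then choose the $\Z_\ell$-lattice $L_\ell\subset V_\ell$ corresponding via $\phi_\ell$ to the standard lattice $\Z_\ell^n\subset\Q_\ell^n$, so that $\rho_\ell(\Gal(\overline K/K))$ lies in $\mathcal{G}(\Z_\ell)\subset\GL(L_\ell)\cong\GL_n(\Z_\ell)$.

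It remains to identify the scheme-theoretic closure $\mathcal{G}_\ell$ of $\rho_\ell(\Gal(\overline K/K))$ inside $\GL_{n,\Z_\ell}$. Since the image lies in the closed subscheme $\mathcal{G}\times_{\Z_S}\Z_\ell$, we get $\mathcal{G}_\ell\subset\mathcal{G}\times\Z_\ell$, both flat over $\Z_\ell$. Their generic fibers coincide: $\mathcal{G}_\ell\times\Q_\ell=\bG_\ell$ by definition of algebraic monodromy, while $(\mathcal{G}\times\Z_\ell)\times\Q_\ell=\bG\times\Q_\ell=\bG_\ell$ via $\phi_\ell$. Because $\mathcal{G}\times\Z_\ell$ is integral and $\Z_\ell$-flat, the ideal sheaf cutting $\mathcal{G}_\ell$ out of $\mathcal{G}\times\Z_\ell$ is $\Z_\ell$-torsion-free and generically zero, hence zero, yielding $\mathcal{G}_\ell=\mathcal{G}\times\Z_\ell$ as chain subgroup schemes of $\GL_{n,\Z_\ell}$. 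The only step that requires genuine care is the spreading-out: producing a smooth reductive $\Z_S$-model of $\bG$ respecting the embedding into $\GL_n$; given Corollary \ref{cor2.1}(ii), the remaining passage from the adelic representation to an equality of integral models is essentially formal.
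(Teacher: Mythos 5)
Your proposal is correct and follows essentially the same route as the paper: apply Corollary \ref{cor2.1}(ii), spread $\bG\subset\GL_{n,\Q}$ out to a smooth reductive $\Z_S$-model $\mathcal{G}$, place the $\ell$-component of $\rho_\A^\bG$ inside $\mathcal{G}(\Z_\ell)$ for $\ell\notin S$, choose the lattice $L_\ell=\phi_\ell(\Z_\ell^n)$, and identify $\mathcal{G}_\ell$ with $\mathcal{G}\times\Z_\ell$. The two minor presentational differences are that you obtain the inclusion $\rho^\bG_\ell(\Gal(\overline K/K))\subset\mathcal{G}(\Z_\ell)$ from compactness of the image in the restricted direct product rather than from the explicit arrangement of $\phi_\ell$ built into the proof of Corollary \ref{cor2.1}(ii) (equation \eqref{help}), and you spell out the final identification of integral models via the torsion-free-ideal argument, which the paper asserts without detail in the proof of Corollary \ref{cor1.5}.
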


\paragraph{}
Suppose $Y=A$ is an abelian variety defined over $K$ of dimension $g$ and $w=1$.
We say that $A$ has ordinary reduction at $v$
if the local representation $V_p$ of $\Gal(\overline K_v/K_v)$ is ordinary.
The following results are due to Pink.

\begin{customthm}{C}\cite[Thm. 5.13(a),(c),(d), Thm. 7.1]{Pi98}\label{thm3}
Let $A$ be an abelian variety defined over a number field $K$ with $\End(A_{\overline K})=\Z$
and suppose $\bG_\ell$ is connected for all $\ell$.
There exists a connected reductive subgroup $\bG$ of $\GL_{2g,\Q}$ such that the following assertions hold.
\begin{enumerate}[(i)]
\item $(\bG\hookrightarrow\GL_{2g,\Q})\times\Q_\ell$ is isomorphic to $\bG_\ell\hookrightarrow\GL_{V_\ell}$ for all $\ell$
in set $\mathcal{L}$ of primes of Dirichlet density one.
\item The derived group $\bG^{\der}$ is $\Q$-simple.
\item If the root system of $\bG$ is determined uniquely by its formal character, 
i.e., if $\bG$ does not have an ambiguous factor (in Theorem \ref{LP1}), then 
we can take $\mathcal{L}$ in (i) to contain all but finitely many primes.
\item If $\bG\times_\Q\overline\Q$ does not have any type $C_r$ simple factors with $r\geq 3$,
then the abelian variety $A$ has ordinary reduction at a Dirichlet density one set of places $v$ of $K$.
\end{enumerate}
\end{customthm}

\noindent By the Tate conjecture of abelian varieties proven by Faltings \cite{Fa83}
and $\End(A_{\overline K})=\Z$, the representations $V_\ell$ are absolutely irreducible.
The $\Q_\ell$-representation $V_\ell=H^1(A_{\overline K},\Q_\ell)$ has a natural 
$\Z_\ell$-model $H^1(A_{\overline K},\Z_\ell)$.
Consider the representation $\Gal(\overline K/K)\to \GL(H^1(A_{\overline K},\Z_\ell))$ and
let $\mathcal{G}_\ell$ be the Zariski closure of the image in $\GL_{H^1(A_{\overline K},\Z_\ell)}$.
Combining the previous results, we obtain Theorem \ref{thm4} below
which extends Theorem \ref{thm3}(iii) to all $\ell$ assuming ordinariness.

\begin{thm}\label{thm4}
Let $A$ be an abelian variety defined over a number field $K$ with $\End(A_{\overline K})=\Z$
and suppose $\bG_\ell$ is connected for all $\ell$ and the following conditions hold.
\begin{enumerate}[(a)]
\item The set of places $v$ in $\mathcal{P}_{K,f}$
for which the local representations $V_p$ of $\Gal(\overline{K}_v/K_v)$ are ordinary is of positive Dirichlet density.
\item The root system of $\bG_\ell$ is determined uniquely by its formal character.
\end{enumerate}
Then there exists a smooth group subscheme $\mathcal{G}\subset\GL_{2g,\Z_S}$ over $\Z_S$ (for some finite $S\subset\mathcal{P}_{\Q,f}$)
with generic fiber $\bG\subset\GL_{2g,\Q}$ and an isomorphism of representations
$(\bG\hookrightarrow \GL_{2g,\Q})\times_\Q \Q_\ell \stackrel{\phi_\ell}{\rightarrow}(\bG_\ell\hookrightarrow\GL_{V_\ell}$) 
for each $\ell\in\mathcal{P}_{\Q,f}$
such that the direct product representation 
$$\prod_{\ell\in\mathcal{P}_{\Q,f}}\rho_\ell:
\Gal(\overline K/K)\to \prod_{\ell\in\mathcal{P}_{\Q,f}}\bG_\ell(\Q_\ell)
\subset\prod_{\ell\in\mathcal{P}_{\Q,f}}\GL_{2g}(\Q_\ell)$$ factors through 
a $\bG$-valued adelic representation via $\phi_\ell$:
$$\rho_{\A}^{\bG}:\Gal(\overline K/K)\to \bG(\A_\Q)\subset \GL_{2g,\Q}(\A_\Q).$$
Moreover, for $\ell\gg0$, the representations $(\mathcal{G}\hookrightarrow\GL_{2g,\Z_S})\times\Z_\ell$
and $\mathcal{G}_\ell\hookrightarrow \GL_{H^1(A_{\overline K},\Z_\ell)}$ are isomorphic.
\end{thm}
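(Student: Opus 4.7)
The plan is to deduce Theorem \ref{thm4} by stacking three ingredients already in hand: Pink's Theorem \ref{thm3}, the characteristic-zero rationality Theorem \ref{thm2}(ii), and the adelic Corollaries \ref{cor2.1}(ii) and \ref{cor2.2}. The main work is to verify the three hypotheses of Theorem \ref{thm2}(ii) in this abelian-variety setting; once this is done the remaining conclusions of Theorem \ref{thm4} assemble almost mechanically.

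First I would verify the hypotheses of Theorem \ref{thm2}(ii). Absolute irreducibility of each $V_\ell$ follows from the Tate conjecture for abelian varieties \cite{Fa83} together with $\End(A_{\overline K})=\Z$, so the group $\bG_\C$, once produced, will act irreducibly on $\C^{2g}$. Hypothesis (a) of Theorem \ref{thm2} is identical to our hypothesis (a). For hypothesis (b): our hypothesis (b) is precisely the non-ambiguous-factor condition of Pink's Theorem \ref{thm3}(iii), and Pink therefore produces a common $\Q$-form $\bG\subset\GL_{2g,\Q}$ matching $\bG_\ell\hookrightarrow\GL_{V_\ell}$ for every $\ell$ in a cofinite set $\mathcal{L}$; base changing to $\C$ yields the candidate $\bG_\C$. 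To extend the match to the remaining finitely many $\ell$, I would combine Serre's formal-character rationality (Theorem \ref{thm0}(ii)) and the Larsen-Pink $\ell$-independence of root data with the uniqueness built into our hypothesis (b): any reductive subgroup of $\GL_{2g,\C}$ admitting an absolutely irreducible faithful representation with the same formal character and the forced root system is conjugate to $\bG_\C$. Hypothesis (c) (invariance of roots under $N_{\GL_{n,\C}}(\bT_\C^{\ss})$) is forced, via Pink's Theorem \ref{thm3}(ii) (stating that $\bG^{\der}$ is $\Q$-simple) and the non-ambiguity restriction on the root system, into the explicit list handled by \cite[Thm. A1, A2]{Hu16}. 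This verification of (c) is where I expect the main obstacle to lie.

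With the hypotheses of Theorem \ref{thm2}(ii) verified, we obtain a connected reductive subgroup $\bG\subset\GL_{2g,\Q}$ together with isomorphisms $\phi_\ell:(\bG\hookrightarrow\GL_{2g,\Q})\times_\Q\Q_\ell\stackrel{\sim}{\rightarrow}(\bG_\ell\hookrightarrow\GL_{V_\ell})$ for every $\ell$. Hypothesis H is a theorem for abelian varieties by \cite{HL17}, so Corollary \ref{cor2.1}(ii) produces the $\bG$-valued adelic representation $\rho_\A^{\bG}:\Gal(\overline K/K)\to\bG(\A_\Q)$, and Corollary \ref{cor2.2} supplies a smooth reductive $\Z_S$-model $\mathcal{G}\subset\GL_{2g,\Z_S}$ of $\bG$ whose base change to $\Z_\ell$ is isomorphic, for every $\ell\notin S$, to the Zariski closure of $\rho_\ell(\Gal(\overline K/K))$ with respect to \emph{some} $\Z_\ell$-lattice in $V_\ell$.

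It remains to replace this unspecified lattice by the canonical lattice $H^1(A_{\overline K},\Z_\ell)$ for $\ell\gg 0$. The Galois-maximality theorem of \cite{HL17}, applied to the canonical lattice, places $\rho_\ell(\Gal(\overline K/K))$ inside a hyperspecial maximal compact subgroup of $\bG(\Q_\ell)$ for almost all $\ell$. Since smooth reductive $\Z_\ell$-models of $\bG$ with a prescribed hyperspecial group of $\Z_\ell$-points are unique up to $\bG(\Q_\ell)$-conjugation \cite{Ti79}, the Zariski closure $\mathcal{G}_\ell\subset\GL_{H^1(A_{\overline K},\Z_\ell)}$ is then isomorphic to $\mathcal{G}\times_{\Z_S}\Z_\ell$ as chain representations, which gives the ``moreover'' clause of the theorem.
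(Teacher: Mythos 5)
Your overall strategy---verify the hypotheses of Theorem~\ref{thm2}(ii), then apply Corollaries~\ref{cor2.1}(ii) and~\ref{cor2.2} using Hypothesis~H for abelian varieties---matches the paper's proof, and your verifications of conditions (a) and (b) of Theorem~\ref{thm2} are essentially sound, though the paper reaches (b) more directly via the $\ell$-independence of formal bi-characters \cite[Thm.~3.19]{Hu13} together with Theorem~\ref{LP1}. However, two steps have real gaps. For condition~(c), appealing to \cite[Thm.~A1,~A2]{Hu16} (i.e.\ Theorem~\ref{c2-invex}) does not suffice: that result covers Hypothesis~A (type~$A_r$ with restricted $r$) and absolutely almost simple groups, whereas Pink's Theorem~\ref{thm3}(ii) only gives that $\bG^{\der}$ is $\Q$-simple, so over $\overline\Q$ it can be a nontrivial product of isomorphic simple factors, and by \cite[Cor.~5.11]{Pi98} the factor type can be $B$, $C$, or $D$ as well as $A$---cases not covered by Theorem~\ref{c2-invex}. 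Being $\Q$-simple and non-ambiguous does not force you into that list. The paper instead gets invariance of roots from Corollary~\ref{LP2}: hypothesis~(b) of Theorem~\ref{thm4} is precisely the negation of the ambiguity conditions of Theorem~\ref{LP1}, so Corollary~\ref{LP2} applies directly, with Theorem~\ref{thm3}(ii) ensuring all simple factors have the same type.

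The argument for the ``moreover'' clause is also incomplete. Knowing that $\rho_\ell(\Gal(\overline K/K))$ lies in a hyperspecial maximal compact of $\bG(\Q_\ell)$ and that $\mathcal{G}_\ell(\Z_\ell)$ is hyperspecial does not identify these two compacts: they are at best $\bG^{\ad}(\Q_\ell)$-conjugate, and even for a conjugating element that lifts to $\GL_{2g}(\Q_\ell)$, there is no reason it should carry the lattice underlying $\mathcal{G}$ to $H^1(A_{\overline K},\Z_\ell)$. The stabilizer in $\bG(\Q_\ell)$ of a lattice is strictly smaller than its stabilizer in $\GL_{2g}(\Q_\ell)$, so prescribing the hyperspecial $\Z_\ell$-points does not pin down the lattice (up to scaling), and hence does not determine the chain representation $\mathcal{G}_\ell\hookrightarrow\GL_{H^1(A_{\overline K},\Z_\ell)}$. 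The paper proves the needed lattice isomorphism directly: the rank-one free $\Z_\ell$-module $\Hom_{\Gal(\overline K/K)}(V_{\Z_\ell}, H^1(A_{\overline K},\Z_\ell))$ has a generator $\Phi_\ell$ whose reduction mod~$\ell$ is nonzero; since $\End(A_{\overline K})=\Z$ forces $H^1(A_{\overline K},\F_\ell)$ to be absolutely irreducible for $\ell\gg 0$ by \cite{FW84}, this reduction is surjective, and Nakayama's lemma upgrades $\Phi_\ell$ to an isomorphism of $\Z_\ell[\Gal(\overline K/K)]$-modules. That mod-$\ell$ irreducibility argument is the ingredient your sketch is missing.
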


\begin{remark}
By Theorem \ref{thm3}(iv), Theorem \ref{LP1}, and 
the fact that for every $\ell$, 
every simple factor  of $\bG_{\ell}\times_{\Q_{\ell}}\overline\Q_{\ell}$ 
is of type A, B, C, or D \cite[Cor. 5.11]{Pi98},
the conditions \ref{thm4}(a),(b) hold if for some prime $\ell'$, 
every simple factor  of $\bG_{\ell'}\times_{\Q_{\ell'}}\overline\Q_{\ell'}$
is of type $A_r$ with $r> 1$. 
\end{remark}

\subsection{The structure of the paper}
The paper is structured on the purely algebraic main theorems \ref{meta1} and \ref{meta2} in next section.
Roughly speaking, it states that if a 
family of connected reductive algebraic subgroups $\bG_\lambda\hookrightarrow\GL_{n,E_\lambda}$
indexed by $\lambda\in\mathcal{P}_{E,f}^{(p)}$ (resp. $\mathcal{P}_{E,f}$)
satisfies some conditions, then there exist a common $E$-form 
of the family of the subgroups (resp. the representations).
The results in $\mathsection1.2$ are established in two big steps.
Firstly, we state and prove the main theorems in $\mathsection2$
which require different techniques from representation theory and Galois cohomology.
The notation and diagrams we developed in $\mathsection2$ are very much influenced by the work \cite{Hu18}.
A crucial step to the existence of a common $E$-form in the main theorem is based on 
the local-global aspects of Galois cohomology $\mathsection2.5$.
Secondly, we prove Theorems \ref{thm1}, \ref{thmcrys}, and \ref{thm2} in $\mathsection3$ by checking that
the conditions of the main theorems are satisfied for the corresponding family of algebraic monodromy groups
of the $E$-compatible systems and applying the main theorems.
For the characteristic $p$ case, 
to prove Theorem \ref{thm1} (resp. Theorem \ref{thmcrys}) by main theorem \ref{meta1} (resp. \ref{meta2}), 
the required conditions are ensured by Theorem \ref{Chin} (resp. recent work \cite{D'Ad20}, see Theorem \ref{DAd}).
The characteristic zero case is more involved. 
It requires the results of formal bi-character ($\mathsection2.2$c'-bi)
and invariance of roots to compensate for the lack of the rigidity condition \ref{Chin}(iii).
The information at the real place (Proposition \ref{anisoreal}) and a finite place (ordinary representation $V_p$) are also needed.
The other results in $\mathsection1.2$ will also be established in $\mathsection3$.
The statements that we quote are named using alphabets (e.g., Theorem \ref{thm0})
and the statements that we prove are named using numbers (e.g., Theorem \ref{thm1}).

\section{Main theorems}
\subsection{Statements}
\begin{custommthm}{I}\label{meta1}
Suppose a connected reductive subgroup $\bG_\lambda\subset\GL_{n,E_\lambda}$ is given 
for each $\lambda\in\mathcal{P}_{E,f}^{(p)}$ such that
the following conditions hold.
\begin{enumerate}[(a)]
\item (Common $E$-form of formal characters): There exists a subtorus $\bT$ of $\GL_{n,E}$ such that for all $\lambda\in\mathcal{P}_{E,f}^{(p)}$,
 $\bT_\lambda:=\bT\times_E E_\lambda$ is a maximal torus of $\bG_\lambda$.
\item ($\lambda$-independence absolutely): There exists a chain of subgroups $\bT^{\sp}\subset\bG^{\sp}\subset\GL_{n,E}$
such that $\bG^{\sp}$ is connected split reductive, $\bT^{\sp}$ is a split maximal torus of $\bG^{\sp}$, 
and for all $\lambda\in\mathcal{P}_{E,f}^{(p)}$, 
 if $\overline E_{\lambda}$ is a completion of $\overline E$ extending $\lambda$ on $E$, 
then there exists an isomorphism of chain representations:
\begin{equation*}
f_{\overline E_{\lambda}} :(\bT^{\sp}\subset\bG^{\sp}\hookrightarrow\GL_{n,E})\times_E \overline E_\lambda\stackrel{\cong}{\rightarrow} 
(\bT_{\lambda}\subset\bG_\lambda\hookrightarrow\GL_{n, E_\lambda})\times_{E_\lambda} \overline E_\lambda.
\end{equation*}
\item 
(Rigidity): The isomorphisms $f_{\overline E_{\lambda}}$ in (b) can be chosen such that the restriction isomorphisms 
$f_{\overline E_{\lambda}}:\bT^{\sp}\times_E \overline E_\lambda\to\bT_{\lambda}\times_{E_\lambda} \overline E_\lambda$
 admit a common $\overline E$-form $f_{\overline E}:\bT^{\sp}\times_E \overline E\to \bT\times_E \overline E$ 
for all $\lambda\in\mathcal{P}_{E,f}^{(p)}$ and $\overline E_{\lambda}$.
\item (Quasi-split): The groups $\bG_\lambda$ are quasi-split for all but finitely many $\lambda\in\mathcal{P}_{E,f}^{(p)}$.
\end{enumerate}
\vspace{.1in}
Then the following assertions hold.
\begin{enumerate}[(i)]
\item There exists a connected reductive group $\bG$ defined over $E$ such that 
$\bG\times_E E_\lambda\cong \bG_\lambda$ for all $\lambda\in\mathcal{P}_{E,f}^{(p)}$.
In particular, $\bG_\lambda$ is unramified for all but finitely many $\lambda$.
\item If moreover $\bG^{\sp}\hookrightarrow \GL_{n,E}$ is irreducible, then there exists
a connected reductive subgroup $\bG$ of $\GL_{n,E}$ 
such that $\bG\hookrightarrow \GL_{n,E}$ is a common $E$-form of the representations $\bG_\lambda\hookrightarrow\GL_{n,E_\lambda}$
for all $\lambda\in\mathcal{P}_{E,f}^{(p)}$.
\end{enumerate}
\end{custommthm}

\vspace{.1in}
For any $E$-algebra $B$, define $\GL_{m,B}$ to be the affine algebraic group over $E$ 
such that for any $E$-algebra $C$ the group of $C$-points is $\GL_{m}(B\otimes_E C)$.

\begin{custommthm}{II}\label{meta2}
Suppose a connected reductive subgroup $\bG_\lambda\subset\GL_{n,E_\lambda}$ is given for each $\lambda\in\mathcal{P}_{E,f}$ 
such that the following conditions hold.
\begin{enumerate}[(a)]
\item (Common $E$-form of formal characters): There exists a subtorus $\bT$ of $\GL_{n,E}$ such that for all $\lambda\in\mathcal{P}_{E,f}$,
 $\bT_\lambda:=\bT\times_E E_\lambda$ is a maximal torus of $\bG_\lambda$.
\item ($\lambda$-independence absolutely): There exists a chain of subgroups $\bT^{\sp}\subset\bG^{\sp}\subset\GL_{n,E}$
such that $\bG^{\sp}$ is connected split reductive, $\bT^{\sp}$ is a split maximal torus of $\bG^{\sp}$, 
and for all $\lambda\in\mathcal{P}_{E,f}$, 
 if $\overline E_{\lambda}$ is a completion of $\overline E$ extending $\lambda$ on $E$, 
then there exists an isomorphism of chain representations:
\begin{equation*}
f_{\overline E_{\lambda}} :(\bT^{\sp}\subset\bG^{\sp}\hookrightarrow\GL_{n,E})\times_E \overline E_\lambda\stackrel{\cong}{\rightarrow} 
(\bT_{\lambda}\subset\bG_\lambda\hookrightarrow\GL_{n, E_\lambda})\times_{E_\lambda} \overline E_\lambda.
\end{equation*}
\item  
(Rigidity): The isomorphisms $f_{\overline E_{\lambda}}$ in (b) 
can be chosen such that the restriction isomorphisms 
$f_{\overline E_{\lambda}}:\bT^{\sp}\times_E \overline E_\lambda\to\bT_{\lambda}\times_{E_\lambda} \overline E_\lambda$
 admit a common $\overline E$-form $f_{\overline E}:\bT^{\sp}\times_E \overline E\to \bT\times_E \overline E$ 
for all $\lambda\in\mathcal{P}_{E,f}$ and $\overline E_{\lambda}$.
\item (Anisotropic torus): The twisted $E$-torus ${}_\mu\! (\bT^{\sp}/\bC)$ 
is anisotropic at some place of $E$ and all real places of $E$, where 
$\bC$ is the center of $\bG^{\sp}$ and $\mu\in Z^1(E,\Aut_{\overline E}\bT^{\sp})$ the cocycle 
defined by $f_{\overline E}$ in (c). 
\end{enumerate}
\vspace{.1in}
Then the following assertions hold.
\begin{enumerate}[(i)]
\item There exists a unique connected reductive group $\bG$ defined over $E$ containing $\bT$ such that 
$(\bT\subset\bG)\times_E E_\lambda\cong (\bT_\lambda\subset\bG_\lambda)$ for all $\lambda\in\mathcal{P}_{E,f}$.
In particular, $\bG_\lambda$ is unramified for all but finitely many $\lambda$.
\item If moreover $\bG^{\sp}\hookrightarrow \GL_{n,E}$ is irreducible, then there exist
an inner form $\GL_{m,D}$ (for some division algebra $D$ over $E$) of $\GL_{n,E}$ over $E$ containing a chain of subgroups 
$\bT\subset\bG$ such that $\bT\subset\bG\hookrightarrow \GL_{m,D}$ 
is a common $E$-form of the chain representations $\bT_\lambda\subset\bG_\lambda\hookrightarrow\GL_{n,E_\lambda}$
for all $\lambda\in\mathcal{P}_{E,f}$. Such a chain of $E$-groups is unique.
\end{enumerate}
\end{custommthm}

\begin{remark}\label{simdiff} There are similarities and differences between the two main theorems.
\begin{enumerate}[(1)]
\item The index set for main theorem \ref{meta1} is $\mathcal{P}_{E,f}^{(p)}$ and for main theorem \ref{meta2} is $\mathcal{P}_{E,f}$.
\item Conditions (a), (b), (c) of the two main theorems are identical except for the index sets.
\item If we embed $\overline E_\lambda$ into $\C$ for all $\lambda$, then 
 condition (b) is equivalent to asking that the $\C$-representation $(\bG_\lambda\to \GL_{n,E_\lambda})\times_{E_\lambda}\C$ 
is independent of $\lambda$.
\item The rigidity condition (c) rigidifies the isomorphisms $f_{\overline E_\lambda}$ in (b)
by requiring them to be extensions of an $\overline E$-isomorphism $\bT^{\sp}\times_E \overline E\to \bT\times_E\overline E$
where $\bT^{\sp}$ (resp. $\bT$) is the torus in (b) (resp. (a)).
\item An $F$-torus $\bT$ is said to be anisotropic if it does not have non-trivial $F$-character.
If $F$ is a number field, $\bT$ is said to be anisotropic at a place $\lambda$ of $F$ if it is anisotropic over $F_\lambda$.
The twisted $E$-torus ${}_\mu\! (\bT^{\sp}/\bC)$ in main theorem \ref{meta2}(d) will be defined in $\mathsection\ref{cocycle}$. 
\item The conclusion of main theorem \ref{meta2} is stronger than that of main theorem \ref{meta1} as
the $E$-torus $\bT$ in condition (a) can be found in the common $E$-form $\bG$ in main theorem \ref{meta2}.
Moreover, if $E$ has only one real place, then the inner form $\GL_{m,D}$ in main theorem \ref{meta2} is equal to $\GL_{n,E}$ by class field theory.
\end{enumerate}
\end{remark}

\subsection{The rigidity condition}
The rigidity condition (c) is important for the construction of 
the $E$-form $\bG$ in the main theorems. It does not come for free.
In this section, we would like to prove that the rigidity condition follows from conditions (a),(b)
and (c') below.

\begin{enumerate}
\item[(c')] Both the following hold.\\
(c'-bi)=(Common $E$-form of formal bi-characters): 
There exists a subtorus $\bT^{\ss}$ of $\bT$ such that $\bT^{\ss}\times_E E_\lambda$ is a maximal torus
of the derived group $\bG_\lambda^{\der}$ of $\bG_\lambda$ for all $\lambda\in\mathcal{P}_{E,f}$;\\
(c'-inv)=(Invariance of roots): 
The normalizer $N_{\GL_{n,E}}(\bT^{\ssp})$ is invariant on the roots of 
the derived group $(\bG^{\sp})^{\der}$ of $\bG^{\sp}$ with respect to
the maximal torus $\bT^{\ssp}:=\bT^{\sp}\cap (\bG^{\sp})^{\der}$.
 \end{enumerate}

\subsubsection{Formal character and bi-character} \label{Formal}
Let $F$ be a field and $\bG$ a connected reductive subgroup of $\GL_{n,F}$.
If $\bT$ is a maximal torus of $\bG$, then $\bT^{\ss}:=\bT\cap \bG^{\der}$ 
is a maximal torus of the derived group $\bG^{\der}$ of $\bG$.

\begin{definition}\label{fbc}\label{fbc}\cite[Def. 2.2, 2.3]{Hu18}
\begin{enumerate}[(i)]
\item The inclusion $\bT\subset\GL_{n,F}$ is said to be a formal character of $\bG\subset\GL_{n,F}$.
\item The chain $\bT^{\ss}\subset\bT\subset\GL_{n,F}$ is said to be a formal bi-character of $\bG\subset\GL_{n,F}$.
\end{enumerate}
\end{definition}

\begin{remark}
Given a chain of subtori $\bT^{\ss}\subset\bT\subset\GL_{n,F}$, 
it is a formal bi-character of $\bG\subset\GL_{n,F}$ if and only if
$\bT\subset\GL_{n,F}$ is a formal character of $\bG\subset\GL_{n,F}$
and $\bT^{\ss}\subset\GL_{n,F}$ is a formal character of $\bG^{\der}\subset\GL_{n,F}$.
It is clear that (c'-bi) together with (a) in the main theorems mean that
there exist a chain of subtori, denoted $\bT^{\ss}\subset\bT\subset\GL_{n,E}$, such that 
$$(\bT^{\ss}\subset\bT\subset\GL_{n,E})\times_E E_\lambda$$ 
is a formal bi-character of $\bG_\lambda\subset\GL_{n,E_\lambda}$ for all $\lambda$.
\end{remark}

\begin{prop}
If conditions (a) and (b) in the main theorems hold and $\bG^{\sp}$ is irreducible on $E^n$, then (c'-bi) holds.
\end{prop}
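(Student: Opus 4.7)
The plan is to define $\bT^{\ss} := (\bT \cap \SL_{n,E})^\circ$ and show that its base change to each $E_\lambda$ recovers the maximal torus $\bT_\lambda^{\ss} := \bT_\lambda \cap \bG_\lambda^{\der}$ of $\bG_\lambda^{\der}$, which is exactly what (c'-bi) demands.

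First I would upgrade the irreducibility of $\bG^{\sp}$ on $E^n$ to absolute irreducibility. Since $\bG^{\sp}$ is split reductive over $E$, a Borel $\bB^{\sp}$ containing $\bT^{\sp}$ is $E$-rational; the weight space decomposition of $E^n$ under the split torus $\bT^{\sp}$ is $E$-rational, so an $E$-rational highest weight vector $v \in E^n$ exists, and the classification of irreducible representations of a split reductive group by their highest weight then shows that $\bG^{\sp}$ remains irreducible on $\overline{E}^n$. Applying condition (b) transfers absolute irreducibility to each $\bG_\lambda$ acting on $E_\lambda^n$.

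Schur's lemma now forces $\bZ(\bG_\lambda)^\circ \subset \G_{m,E_\lambda}$, the scalar subgroup of $\GL_{n,E_\lambda}$, and one has the almost direct product decomposition $\bT_\lambda = \bT_\lambda^{\ss} \cdot \bZ(\bG_\lambda)^\circ$ with $\bT_\lambda^{\ss} \subset \SL_{n,E_\lambda}$. Since $\det(z) = z^n$ for a scalar $z$, any element of $\bT_\lambda \cap \SL_{n,E_\lambda}$ lies in $\bT_\lambda^{\ss} \cdot \mu_n$; the finite factor $\mu_n$ disappears upon taking identity components, yielding $(\bT_\lambda \cap \SL_{n,E_\lambda})^\circ = \bT_\lambda^{\ss}$.

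Finally, formation of kernels and of identity components of finite-type affine group schemes commutes with field base change, so
\begin{equation*}
\bT^{\ss} \times_E E_\lambda \;=\; (\bT \cap \SL_{n,E})^\circ \times_E E_\lambda \;=\; (\bT_\lambda \cap \SL_{n,E_\lambda})^\circ \;=\; \bT_\lambda^{\ss},
\end{equation*}
which is a maximal torus of $\bG_\lambda^{\der}$, establishing (c'-bi). No step looks genuinely difficult; the only mildly subtle point is the promotion to absolute irreducibility in the first step, which rests entirely on the split hypothesis in (b).
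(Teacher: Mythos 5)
Your proposal is correct and matches the paper's argument: both take $\bT^{\ss}$ to be the identity component of $\bT\cap\SL_{n,E}$ (i.e.\ of $\ker(\det|_{\bT})$) and use Schur's lemma on the absolutely irreducible $\bG_\lambda$-module to deduce that $\bT_\lambda$ is an almost direct product of $\bT_\lambda^{\ss}$ with a central scalar torus, so taking $\det=1$ recovers $\bT_\lambda^{\ss}$ up to a finite $\mu_n$ ambiguity killed by passing to identity components. The only difference is that you explicitly justify why $E$-irreducibility of the split group $\bG^{\sp}$ upgrades to absolute irreducibility, a standard step the paper leaves implicit.
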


\begin{proof}
Let $\bT\subset\GL_{n,E}$ be in (a) and let $\bT^{\ss}$ be 
the identity component of the kernel of the determinant map
$\bT\to\GL_{n,E}\stackrel{\det}{\rightarrow} \mathbb{G}_m$. 
Since $\bG_\lambda$ is connected and the representation $\bG_\lambda\subset\GL_{n,E_\lambda}$ is absolutely irreducible
for all $\lambda$ by the assumptions, $\bG_\lambda$ is either $\bG_\lambda^{\der}$ or $\bG_\lambda^{\der}\cdot\mathbb{G}_m$
by Schur's lemma.
Hence by counting dimension, $\bT^{\ss}\times_E E_\lambda$ is a maximal torus of $\bG_\lambda^{\der}$ for all $\lambda$.
\end{proof}

\subsubsection{Invariance of roots} \label{Invofroots}
Let $F$ be a field of characteristic zero and $\bG$ a connected split semisimple subgroup of $\GL_{n,F}$.
Fix a split maximal torus $\bT$ of $\bG$ and denote by $\mathbb{X}$ the character group 
of $\bT$. Let $R\subset \mathbb{X}$ be the set of \emph{roots} of $\bG$ with respect to 
$\bT$. Let $\bN:=N_{\GL_{n,F}}(\bT)$ be the normalizer of $\bT$ in $\GL_{n,F}$.
Since $\bN$ acts on $\bT$, it also acts on $\mathbb{X}$.
We would like to know when $R$ is invariant under $\bN$.
It is easy to see that this invariance of roots condition (i.e., $\bN\cdot R=R$) is independent of the choice of the
maximal torus $\bT$ and is invariant under field extension. 
So, we take $F=\C$ for simplicity. If $\bH$ is an almost simple factor of $\bG$,
then by the Cartan-Killing classification the root system of $\bH$ is one of the following:
$A_r$ $(r\geq1)$, $B_r$ $(r\geq 2)$, $C_r$ $(r\geq 3)$, $D_r$ $(r\geq 4)$, $E_6$, $E_7$, $E_8$, $F_4$, $G_2$.
We also use the convention that $C_2=B_2, D_2=A_1^2$, and $D_3=A_3$.

\paragraph{} Here are some examples for the invariance of roots condition.

\begin{customthm}{D}\label{c2-invex}\cite[Thm. 3.10]{Hu18},\cite[Thm. A2]{Hu20}
The following $\C$-connected semisimple groups $\bG$ satisfy 
the invariance of roots condition for all representations $\bG\subset\GL_{n,\C}$.
\begin{enumerate}[(a)]
\item (Hypothesis A): $\bG$ has at most one $A_4$ almost simple factor and if $\bH$ is
an almost simple factor of $\bG$, then $\bH$ is of type $A_r$ for some $r\in\N\backslash\{1,2,3,5,7,8\}$.
\item (Almost simple):  $\bG$ is almost simple of type different from $\{A_7,A_8,B_4,D_8\}$.
\end{enumerate}
\end{customthm}

Suppose $\bG$ is irreducible on the ambient space $\C^n$. If $\bG_1$ is a connected normal subgroup of $\bG$,
then there exists an unique complementary connected normal subgroup $\bG_2$ of $\bG$ such that 
the natural map $\bG_1\times\bG_2\to \bG$ is an isogeny of semisimple groups.
Moreover, there exist unique irreducible representations $V_1$ and $V_2$ of respectively $\bG_1$ and $\bG_2$ such that 
the composition representation $\bG_1\times\bG_2\to\bG\to\GL_{n,\C}$ is equal to the tensor product representation 
$(\bG_1\times\bG_2, V_1\otimes V_2)$ (see \cite{FH91}).
We say that the representation $(\bG_1,V_1)$ is a \emph{factor} of the representation $(\bG,\C^n)$.

\begin{customthm}{E}\label{LP1}(by \cite[Thm. 4]{LP90})
If $\bG,\bG'\subset\GL_{n,\C}$ are two connected semisimple subgroups with the same formal character $\bT\subset\GL_{n,\C}$ and are both irreducible on the ambient space $\C^n$. 
Then the roots $R$ and $R'$ of respectively $\bG$ and $\bG'$ (with respect to $\bT$)
are identical in $\X$ and the two representations are isomorphic unless one of the following conditions holds.
\begin{enumerate}[(a)]
\item For $r_1,...,r_m,r\in\N$ such that $r_1+\cdots+r_m=r$, the spin representation of $B_r$ is a factor of $(\bG,\C^n)$
and the tensor product of  the spin representations of $B_{r_j}$ for all $1\leq j\leq m$ is a factor of $(\bG',\C^n)$.
\item For $1\leq k\leq r-1$ and $r\geq 2$, the representation of $C_r$ (resp. $D_r$)
with highest weight $(k,k-1,..,2,1,...0)$ is a factor of $(\bG,\C^n)$ (resp. $(\bG',\C^n)$).
\item The unique dimension $27$ irreducible representation of $A_2$ (resp. $G_2$) is a factor of $(\bG,\C^n)$ (resp. $(\bG',\C^n)$).
\item Pick two out of the three unique dimension $4096=2^{12}$ irreducible representations of $C_4$, $D_4$, and $F_4$.
Then one is a factor of  $(\bG,\C^n)$ and the other one is a factor of $(\bG',\C^n)$.
\end{enumerate}
\end{customthm}

The following corollary follows directly by taking $\bG'=g\bG g^{-1}$, where $g\in\bN$.

\begin{cor}\label{LP2}
If $\bG\subset\GL_{n,\C}$ is a connected semisimple subgroup that is irreducible on the ambient space $\C^n$,
then the invariance of roots condition holds if the following conditions are satisfied.
\begin{enumerate}[(a)]
\item For $r_1,...,r_m,r\in\N$ such that $r_1+\cdots+r_m=r$,  the spin representation of $B_r$ 
and the tensor product of  the spin representations of $B_{r_j}$ for all $1\leq j\leq m$ are not both factors of $(\bG,\C^n)$.
\item For $1\leq k\leq r-1$ and $r\geq 2$, the representations of $C_r$ and $D_r$ 
with highest weight $(k,k-1,..,2,1,...0)$ are not both factors of $(\bG,\C^n)$.
\item The unique dimension $27$ irreducible representations of $A_2$ and $G_2$ are not both factors of $(\bG,\C^n)$.
\item Any two of the unique dimension $4096$ irreducible representations of $C_4$, $D_4$, and $F_4$ are not both factors of $(\bG,\C^n)$.
\end{enumerate}
\end{cor}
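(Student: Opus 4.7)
The strategy, suggested by the text, is to apply Theorem \ref{LP1} to the pair $(\bG,\bG')$ where $\bG':=g\bG g^{-1}$ for an arbitrary $g\in\bN:=N_{\GL_{n,\C}}(\bT)$, and to deduce that $g\cdot R=R$, where $g$ acts on the character lattice $\X$ of $\bT$ through its conjugation action on $\bT$. Since $g$ normalises $\bT$, the group $\bG'$ is a connected semisimple subgroup of $\GL_{n,\C}$, irreducible on $\C^n$, containing $\bT$ as a maximal torus, and with the same formal character $\bT\subset\GL_{n,\C}$ as $\bG$; thus the hypotheses of Theorem \ref{LP1} apply to $(\bG,\bG')$.

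The pivotal observation is that the $\C$-linear isomorphism $\phi:\C^n\to\C^n$, $v\mapsto gv$, intertwines the $\bG$-action on the source with the $\bG'$-action on the target via the group isomorphism $\bG\stackrel{\sim}{\to}\bG'$, $h\mapsto ghg^{-1}$. Consequently $(\bG,\C^n)$ and $(\bG',\C^n)$ are isomorphic as abstract group representations, so they share the same list of \emph{factors} in the sense defined in the paper. Each hypothesis (a)--(d) of the corollary forbids $(\bG,\C^n)$ from containing a specific pair of factors simultaneously; by the invariance just noted, the same hypothesis also rules out the corresponding exceptional case (a)--(d) of Theorem \ref{LP1}, which would demand one member of the pair as a factor of $(\bG,\C^n)$ and the companion member as a factor of $(\bG',\C^n)$.

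Finally, a direct Lie algebra computation identifies the root system $R'$ of $\bG'$ with respect to $\bT$ as $g\cdot R$: if $X\in\mathrm{Lie}(\bG)$ is a root vector for $\alpha\in R$, then $gXg^{-1}\in\mathrm{Lie}(\bG')$ and, for every $t\in\bT$, $t(gXg^{-1})t^{-1}=g\,(g^{-1}tg)\,X\,(g^{-1}tg)^{-1}\,g^{-1}=\alpha(g^{-1}tg)\cdot gXg^{-1}=(g\cdot\alpha)(t)\cdot gXg^{-1}$, so $gXg^{-1}$ is a root vector for $g\cdot\alpha$. Theorem \ref{LP1} then forces $R=R'$, hence $g\cdot R=R$; since $g\in\bN$ was arbitrary, $\bN\cdot R=R$, which is precisely the invariance of roots. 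The main conceptual step, and only subtle point, is the identification of the factors of $(\bG,\C^n)$ with those of $(\bG',\C^n)$ via $\phi$; once this is in place the argument is purely formal bookkeeping against the exceptional list of Theorem \ref{LP1}.
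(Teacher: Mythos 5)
Your proof is correct and takes exactly the approach the paper intends: the paper states that Corollary \ref{LP2} ``follows directly by taking $\bG'=g\bG g^{-1}$, where $g\in\bN$,'' and you have simply spelled out the routine verifications (that $\bG'$ satisfies the hypotheses of Theorem \ref{LP1}, that conjugation by $g$ identifies the factors of $(\bG,\C^n)$ with those of $(\bG',\C^n)$ so that hypotheses (a)--(d) exclude the exceptional cases, and that $R'=g\cdot R$). Nothing further is needed.
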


\paragraph{} Inspired by Theorem \ref{LP1}, we give more examples for the invariance of roots condition.

\begin{thm}\label{c2-invex2}
Suppose $\bG\subset\GL_{n,\C}$ is a connected adjoint semisimple subgroup that 
satisfies the following Lie type assumptions:
\begin{enumerate}[(a)]
\item $\bG$ does not have a factor of type $B_r$ ($r\geq 2$).
\item If $\bG$ has a factor of type $C_3$, then it cannot have a factor of type $A_3$.
\item If $\bG$ has a factor of type $C_r$, then it cannot have a factor of type $D_r$ ($r\geq 4$). 
\item If $\bG$ has a factor of type $F_4$, then it cannot have a factor of type $D_4$.
\item If $\bG$ has a factor of type $G_2$, then it cannot have a factor of type $A_2$.
\end{enumerate}
Then the invariance of roots condition holds.
\end{thm}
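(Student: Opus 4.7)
The plan is to apply Corollary \ref{LP2} on the invariance of roots condition, which reduces the problem to ruling out the four Larsen--Pink ambiguity patterns (a)--(d) in the tensor factor decomposition of $(\bG, \C^n)$. Given $g \in N_{\GL_{n,\C}}(\bT)$, set $\bG' := g\bG g^{-1}$; both $\bG$ and $\bG'$ contain $\bT$ as a maximal torus, share the same formal character, and are abstractly isomorphic, hence carry the same almost-simple Lie-type decomposition. Consequently, an ambiguity pattern between $\bG$ and $\bG'$ can be triggered only when both distinct Lie types implicated by the pattern already occur as factors of $\bG$. Since Corollary \ref{LP2} is stated under irreducibility on $\C^n$, I would first reduce to that case by noting that $N_{\GL_{n,\C}}(\bT)$ respects the isotypic decomposition of $(\bG,\C^n)$, so it suffices to verify invariance of roots on each irreducible summand.

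On an irreducible summand I would then verify, hypothesis by hypothesis, that the four ambiguity exclusions of Corollary \ref{LP2} hold. The $B_r$-spin ambiguity \ref{LP1}(a) is killed outright by assumption (a), noting that $B_1 = A_1$ carries no such ambiguity. The $C_r$--$D_r$ ambiguity \ref{LP1}(b) splits into three cases: $r=2$ collapses to $B_2 = C_2$ and is dispatched by (a); $r=3$ becomes a $C_3$--$A_3$ ambiguity via $D_3 = A_3$ and is dispatched by (b); $r \geq 4$ is dispatched directly by (c). The dimension-$27$ ambiguity \ref{LP1}(c) between $A_2$ and $G_2$ is excluded by (e). Finally, the dimension-$4096$ ambiguity \ref{LP1}(d) among $\{C_4, D_4, F_4\}$ breaks into three pairs: $(C_4, D_4)$ excluded by (c), $(D_4, F_4)$ excluded by (d), and the residual pair $(C_4, F_4)$.

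The main obstacle will be the residual $(C_4, F_4)$ subcase of \ref{LP1}(d), which is not directly forbidden by the Lie-type hypotheses (a)--(e). To dispatch it, I would exploit the fact that $\bG' \cong \bG$ forces the $C_4$ factor of $\bG'$ to act through its unique dimension-$4096$ representation on the same isotypic component of $\C^n$ as the $C_4$ factor of $\bG$; an analogous statement holds for the $F_4$ factor. Consequently, a genuine $C_4 \leftrightarrow F_4$ exchange of factors between $\bG$ and $\bG'$ on a common isotypic component is blocked, and the Larsen--Pink ambiguity pattern \ref{LP1}(d) cannot be realized. Once this last case is settled, Corollary \ref{LP2} applies on every irreducible summand and the invariance of roots condition for $\bG \subset \GL_{n,\C}$ follows.
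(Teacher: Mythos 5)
Your proposal has several genuine gaps, and it diverges fundamentally from the paper's argument in ways that matter.

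\textbf{The reduction to irreducible summands is not valid.} You claim that $N_{\GL_{n,\C}}(\bT)$ ``respects the isotypic decomposition of $(\bG,\C^n)$,'' but this is false: $\bN$ merely permutes the $\bT$-weight spaces of $\C^n$, and there is no reason for it to preserve the coarser $\bG$-isotypic decomposition. Indeed an element $g\in\bN$ need not send a $\bG$-submodule to a $\bG$-submodule (it sends it to a $g\bG g^{-1}$-submodule), so there is no way to localize the ``invariance of roots'' condition to individual summands. Since Corollary \ref{LP2} is stated only for representations irreducible on all of $\C^n$, your route through it is blocked at the very first step. Relatedly, you never use the hypothesis that $\bG$ is \emph{adjoint}, which is a strong signal that something is missing: that hypothesis is precisely what the paper's proof exploits, via $\Phi=\X$, to handle the general reducible case at the lattice level rather than component by component.

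\textbf{The $(C_4,F_4)$ subcase is not actually dispatched.} You correctly observe that hypotheses (a)--(e) do not forbid $\bG$ from simultaneously having a $C_4$ factor and an $F_4$ factor, so this is not covered by Corollary \ref{LP2} out of the box. But the resolution you offer --- that ``$\bG'\cong\bG$ forces the $C_4$ factor of $\bG'$ to act through its unique dimension-$4096$ representation on the same isotypic component'' and hence ``a genuine $C_4\leftrightarrow F_4$ exchange \ldots is blocked'' --- is a restatement of what needs to be proved, not an argument. Nothing you say explains why $g$ cannot carry the short roots of a $C_4$ factor onto the short roots of an $F_4$ factor (both are $D_4$ subsystems), which is exactly the configuration the Larsen--Pink pattern (d) allows at the level of formal characters. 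The paper's proof handles this by a lattice-theoretic argument: it shows any $g\in\bN$ permutes the root sublattices $\Phi_i$ isometrically (using $\Phi=\X$ for adjoint groups and Hilbert~90-free orthogonality of the decomposition of $\X_\R$), then analyzes the short-root systems $R_i^\circ$ and how they generate $R_i$ to force $g\cdot R_i=R_j$. That argument lives entirely at the level of $\X$ and never invokes Theorem \ref{LP1} or Corollary \ref{LP2}, which is what lets it sidestep both the irreducibility restriction and the unlisted $(C_4,F_4)$ configuration. Your proposal, as written, does neither.
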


\begin{proof}
Let $\bG_1,...,\bG_k$ be the almost simple factors of $\bG$.
Then $\bT_i=\bG_i\cap\bT$ is a maximal torus of $\bG_i$ for all $i$.
Let $\X_i$ be the character group of $\bT_i$ and $R_i$ the roots of $\bG_i$ with respect to $\bT_i$.
Let $\Phi\subset\X$ (resp. $\Phi_i\subset\X_i$) be the subgroup (root lattice) generated by $R$ (resp. $R_i$).
One can impose a metric on the real vector space $\X_\R:=\X\otimes_\Z\R$ such that $(R, \X_\R)$
is a root system, the normalizer $\bN$ is isometric on $\X_\R$, and
the decomposition 
\begin{equation}\label{decomp}
R=\coprod_{i=1}^k R_i\subset\bigoplus_{i=1}^k \Phi_i\otimes\R=\bigoplus_{i=1}^k \X_i\otimes\R =\X_\R
\end{equation}
is orthogonal (see e.g., \cite[Appendix A]{Hu20}). 
The root subsystem $(R_i,\X_{i,\R}:=\X_i\otimes\R)$ is
irreducible for all $i$. The lemma below is needed.

\begin{lemma}
Suppose $\bG\subset\GL_{n,\C}$ is a connected semisimple subgroup that 
that satisfies the assumptions of Theorem \ref{c2-invex2}.
The following assertions are equivalent.
\begin{enumerate}[(i)]
\item $R$ is invariant under $\bN$.
\item If $g\in\bN$, then $g\cdot R\subset \Phi$.
\item If $g\in\bN$, then $g$ induces an automorphism of $\Phi$.
\end{enumerate}
\end{lemma}

\begin{proof}
(i) $\Rightarrow$ (ii): trivial.\\
(ii) $\Rightarrow$ (iii): (ii) is equivalent to $\bN\cdot\Phi\subset\Phi$. 
Since $g$ induces an automorphism of $\X$ and $\X/\Phi$ is finite, $g\cdot\Phi\subset\Phi$ implies that $g\cdot\Phi=\Phi$.\\
(iii) $\Rightarrow$ (i): The set of non-zero elements of $\Phi_i$ with the shortest length is equal 
to the set of short roots $R_i^\circ$ of $R_i$ \cite[$\mathsection4$ Lemma]{LP90}, which 
also spans $\X_i\otimes\R$.
The decomposition in \eqref{decomp} is orthogonal and $\Phi=\oplus_{i=1}^k \Phi_i$ in $\X_\R$. 
Since $g$ is isometric on $X_\R$ and induces an automorphism of $\Phi$ by (iii), $g$ permutes 
the union $R_1^\circ\cup R_2^\circ\cup \cdots\cup R_m^\circ$.
Note that $R_i^\circ= R_i$ if $R_i$ is of type $A,D,E$ and the following \cite[p.395]{LP90}:
\begin{align*}\label{Liefact}
 B_r^\circ= A_1^r\hspace{.05in}(r\geq 2),\hspace{.1in}C_3^\circ=A_3,\hspace{.1in} C_r^\circ=D_r\hspace{.05in}(r\geq 4),\hspace{.1in} F_4^\circ=D_4,\hspace{.1in} G_2^\circ=A_2.
\end{align*}
These facts and assumption (a) imply that $R_i^\circ$ remains irreducible for all $i$.
Then the orthogonality of the decomposition \eqref{decomp} 
and the fact that $g$ is isometric on $X_\R$ imply that $g$ permutes the set $\{R_1^\circ,R_2^\circ,...,R_m^\circ\}$.
Since $g$ is isometric on $X_\R$, the Lie type assumptions (a)--(e)
and the above facts about short roots imply that $R_i$ and $R_j$ ($1\leq i, j\leq m$) are of
the same type if $g\cdot R_i^\circ=R_j^\circ$.
By observing how the $R_i^\circ$ generate $R_i$ \cite[Table 1]{GOV94},
we obtain $g\cdot R_i=R_j$. Hence, $g$ actually permutes the union of roots $R_1\cup R_2\cup \cdots\cup R_m$.
By the orthogonality of the decomposition \eqref{decomp}, the fact that $g$ is isometric on $X_\R$, and induction, we conclude
that $g$ permutes $R$. 
\end{proof}

Back to the theorem, we have $\Phi=\X$ because $\bG$ is adjoint. 
Since $\X$ is invariant under $\bN$ by definition, $\Phi$ is invariant under $\bN$. 
Therefore, $R$ is invariant under $\bN$ by the lemma.
\end{proof}

\subsubsection{Conditions for rigidity}

\begin{prop}\label{c2c1}
If conditions (a), (b) in the main theorem(s) and (c') hold, then condition (c) in the main theorem(s) also holds.
\end{prop}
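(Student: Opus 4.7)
The plan is to modify the isomorphisms $f_{\overline E_\lambda}$ from condition (b) in two stages so that their restrictions to $\bT^{\sp}$ all descend from a single $\overline E$-isomorphism. Stage one uses (c'-bi) to align formal bi-characters; stage two uses (c'-inv) to rigidify the resulting chain isomorphisms of tori across $\lambda$.

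\emph{Stage one.} Since $f_{\overline E_\lambda}$ maps $(\bG^{\sp})^{\der}$ to $\bG_\lambda^{\der}$, the image $f_{\overline E_\lambda}(\bT^{\ssp})$ is a maximal torus of $\bG_\lambda^{\der}$ over $\overline E_\lambda$; by (c'-bi), so is $\bT^{\ss}\otimes_E \overline E_\lambda$. All maximal tori of a reductive group being conjugate, I would compose $f_{\overline E_\lambda}$ with an inner automorphism of $\bG_\lambda$, an allowed chain-isomorphism modification, to arrange simultaneously $f_{\overline E_\lambda}(\bT^{\ssp}) = \bT^{\ss}\otimes_E \overline E_\lambda$ and $f_{\overline E_\lambda}(\bT^{\sp}) = \bT\otimes_E \overline E_\lambda$. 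Restriction then yields a chain isomorphism of tori
\[ g_\lambda := f_{\overline E_\lambda}\big|_{\bT^{\sp}}: (\bT^{\ssp}\subset\bT^{\sp})_{\overline E_\lambda} \xrightarrow{\sim} (\bT^{\ss}\subset\bT)_{\overline E_\lambda} \]
realized by conjugation by some $\gamma_\lambda\in\GL_n(\overline E_\lambda)$.

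\emph{Stage two.} Such conjugation-realized chain isomorphisms form a torsor for the finite group $W := (N_{\GL_n}(\bT^{\sp})\cap N_{\GL_n}(\bT^{\ssp}))/Z_{\GL_n}(\bT^{\sp})$, which arises as the $\overline E$-points of a finite étale $E$-group scheme; in particular base change from $\overline E$ to $\overline E_\lambda$ is a bijection on the torsor. Fix once and for all an $\overline E$-chain isomorphism $g: (\bT^{\ssp}\subset\bT^{\sp})_{\overline E}\to (\bT^{\ss}\subset\bT)_{\overline E}$ realized by conjugation by some $\gamma\in\GL_n(\overline E)$. Then for each $\lambda$ one can write $g_\lambda = w_\lambda\cdot (g\otimes_{\overline E}\overline E_\lambda)$ for a unique $w_\lambda\in W$. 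Using (c'-inv), transferred via $g$ to the target side, I will argue that every element of $W$ normalizes $\bG_\lambda^{\der}$, and hence normalizes $\bG_\lambda = \bG_\lambda^{\der}\cdot \bC_\lambda$ since its center $\bC_\lambda = \bigcap_\alpha \ker(\alpha)\subset\bT$ is determined by the set of roots. Granting this, $w_\lambda^{-1}$ defines an inner chain automorphism of $(\bT\subset\bG_\lambda)_{\overline E_\lambda}$, and composing $f_{\overline E_\lambda}$ on the target with $w_\lambda^{-1}$ produces a new $f_{\overline E_\lambda}$ whose restriction to $\bT^{\sp}$ equals $g\otimes_{\overline E}\overline E_\lambda$ for every $\lambda$. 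Taking $f_{\overline E} := g$ verifies condition (c).

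\emph{Main obstacle.} The delicate step will be verifying that an element $w\in N_{\GL_n}(\bT^{\ss})$ permuting the roots of $\bG_\lambda^{\der}$ actually satisfies $w\bG_\lambda^{\der} w^{-1} = \bG_\lambda^{\der}$. A priori the $1$-dimensional root subspaces of the Lie algebra $\mathfrak{g}_\lambda^{\der}$ sit inside possibly higher-dimensional $\bT^{\ss}$-weight spaces of $\mathfrak{gl}_n$, and need not be preserved by $N_{\GL_n}(\bT^{\ss})$ just because $w$ permutes the weights labelling them. The resolution I envisage is to invoke that a semisimple Lie subalgebra of $\mathfrak{gl}_n$ containing the Cartan $\mathfrak{h}^{\ss}$ and having the prescribed root set in $\X(\bT^{\ss})$ is essentially rigid via its Chevalley-type generators and bracket relations, so that (c'-inv) forces $w$ to act by an automorphism of $\mathfrak{g}_\lambda^{\der}$ and hence of $\bG_\lambda^{\der}$.
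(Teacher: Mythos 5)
Your overall architecture — Stage one aligning formal bi-characters via (c'-bi), Stage two rigidifying the torus maps across $\lambda$ via (c'-inv) — is the right one and matches the paper's strategy up through \eqref{chain3}, and you have correctly isolated the delicate point. But your proposed resolution of that point does not hold up, and without a correct replacement the proof has a genuine gap.

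You assert that a semisimple Lie subalgebra of $\mathfrak{gl}_n$ with a prescribed Cartan $\mathfrak{h}^{\ss}$ and prescribed root set in $\X(\bT^{\ss})$ is ``essentially rigid,'' and from this deduce that $w_\lambda$ normalizes $\mathfrak{g}_\lambda^{\der}$ (hence $\bG_\lambda^{\der}$, hence $\bG_\lambda$). Both halves are false. Take $\mathfrak{sl}_2\hookrightarrow\mathfrak{gl}_4$ diagonally via $V\oplus V$; the Cartan is $\{\mathrm{diag}(a,-a,a,-a)\}$, whose centralizer in $\GL_4$ is $\GL_2\times\GL_2$ acting on the eigenspaces spanned by $\{e_1,e_3\}$ and $\{e_2,e_4\}$. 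Conjugating $\mathfrak{sl}_2$ by a generic element of this centralizer yields a genuinely distinct copy of $\mathfrak{sl}_2$ with the same Cartan and identical root set. So such subalgebras are not unique, and an element of $N_{\GL_n}(\bT^{\ss})$ that permutes the roots only preserves the (possibly higher-dimensional) $\bT^{\ss}$-weight spaces of $\mathfrak{gl}_n$, not the $1$-dimensional root lines inside them --- precisely the obstacle you name, and (c'-inv) by itself does not overcome it.

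What the paper actually proves is the weaker statement that is sufficient: $A_\lambda$ (your $w_\lambda$) need not normalize $\bG_\lambda^{\der}$, but $A_\lambda\bG_\lambda^{\der}A_\lambda^{-1}$ and $\bG_\lambda^{\der}$ are conjugate by an element of $\GL_n(\overline E_\lambda)$ that \emph{centralizes} the torus, which suffices to absorb the discrepancy. The closing of the gap uses three inputs that do not appear in your sketch: (i) (c'-inv) gives equality of roots, and then \cite[Thm.~3.8]{Hu18} upgrades this to equality of the full absolute root data with respect to the common torus; (ii) the isomorphism theorem for reductive groups \cite[Thm.~16.3.2]{Sp08} then produces an abstract isomorphism $b_\lambda$ of the two groups inducing the identity on root data, in particular fixing the torus pointwise; (iii) since a representation of a connected reductive group is determined up to equivalence by its restriction to a maximal torus, $b_\lambda$ is realized inside $\GL_n(\overline E_\lambda)$ by conjugation by a matrix $B_\lambda$, and the torus-fixing property forces $B_\lambda$ to centralize the torus. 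If you replace the claim ``$w_\lambda$ normalizes $\bG_\lambda$'' by ``after further conjugation by a torus-centralizing element, $w_\lambda$ becomes a chain automorphism of $(\bT\subset\bG_\lambda)$'' and supply (i)--(iii), your Stage two becomes correct and essentially reproduces the paper's argument.
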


\begin{proof}
By (a) and (c'-bi), we have a chain of subtori $\bT^{\ss}\subset\bT\subset\GL_{n,E}$ such that for all $\lambda$,
$$\bT^{\ss}_\lambda\subset\bT_\lambda\subset\GL_{n,E_\lambda}:=(\bT^{\ss}\subset\bT\subset\GL_{n,E})\times_E E_\lambda$$ 
is a formal bi-character of $\bG_\lambda\subset\GL_{n,E_\lambda}$.
By (b), we have the field extensions diagram
\begin{displaymath}
  \xymatrix@C=2em{
    & \overline E_\lambda \ar@{-}[dr]& \\
 \overline E \ar@{-}[ur] & & {E_\lambda} \ar@{-}[dl]\\
     &  \ar@{-}[ul] {E}     }
\end{displaymath}
and a chain $\bT^{\sp}\subset\bG^{\sp}$ (over $E$) such that for all $\lambda$, there exists an 
$\overline E_\lambda$-isomorphism of representations $f_{\overline E_\lambda}$ taking 
$\bT^{\sp}\subset\bG^{\sp}$ to $\bT_\lambda\subset\bG_\lambda$ (omitting the extension field for simplicity).
This implies that $f_{\overline E_\lambda}$ maps 
$\bT^{\ssp}:=\bT^{\sp}\cap(\bG^{\sp})^{\der}$ to $\bT^{\ss}_\lambda=\bT_\lambda\cap\bG_\lambda^{\der}$
for all $\lambda$. Hence, we conclude that for all $\lambda$, the two chains
\begin{equation}\label{chain1}
\bT^{\ssp}\subset\bT^{\sp}\subset\bG^{\sp} \hspace{.2in}\text{and}\hspace{.2in} \bT^{\ss}_\lambda\subset\bT_\lambda\subset\bG_\lambda
\end{equation}
are conjugate in $\GL_n(\overline E_\lambda)$. In particular, the two $E$-chains
\begin{equation}\label{chain2}
\bT^{\ssp}\subset\bT^{\sp} \hspace{.2in}\text{and}\hspace{.2in} \bT^{\ss}\subset\bT
\end{equation}
are conjugate in $\GL_n(\overline E)$. So we choose $M\in \GL_n(\overline E)$ such that
\begin{equation}\label{chain3}
\bT^{\ssp}\subset\bT^{\sp}\hspace{.1in} = \hspace{.1in}M(\bT^{\ss}\subset\bT)M^{-1}.
\end{equation}
To finish the proof, it suffices to find for all $\lambda$, a matrix $B_\lambda\in\GL_n(\overline E_\lambda)$, such that 
the conjugation map by $B_\lambda$ takes $M\bG_\lambda M^{-1}$ to $\bG^{\sp}$
and is identity on $\bT^{\sp}=M \bT M^{-1}$. Such $B_\lambda$ exists. 
Indeed, there exists $A_\lambda\in\GL_n(\overline E_\lambda)$ such that 
\begin{equation}\label{chain4}
\bT^{\ssp}\subset\bT^{\sp}\subset\bG^{\sp}\hspace{.1in} = \hspace{.1in}A_\lambda M(\bT^{\ss}_\lambda\subset\bT_\lambda\subset\bG_\lambda)M^{-1}A_\lambda^{-1}
\end{equation}
because the chains in \eqref{chain1} are conjugate in $\GL_n(\overline E_\lambda)$.
Then \eqref{chain3} and \eqref{chain4} imply that $A_\lambda\in N_{\GL_n}(\bT^{\ssp})$ and 
conjugation by $A_\lambda$ takes the roots of  $M \bG_\lambda^{\der} M^{-1}$ to the roots of $(\bG^{\sp})^{\der}$.
By (c'-inv), the roots of the two semisimple (derived) groups are identical (in the character group of $\bT^{\ssp}$).
Hence, \cite[Thm. 3.8]{Hu18} implies that the absolute root data of $M \bG_\lambda M^{-1}$ and $\bG^{\sp}$ are identical 
with respect to the common maximal torus $M \bT_\lambda M^{-1}=\bT^{\sp}$. 
By \cite[Thm. 16.3.2]{Sp08}, there exists an $\overline E_\lambda$-isomorphism $b_\lambda$ 
taking the pair $(M \bG_\lambda M^{-1}, M \bT_\lambda M^{-1})$
to the pair $(\bG^{\sp},\bT^{\sp})$ inducing the identity map between their root data.
Let $i_1$ and $i_2$ be the tautological representation of $M \bG_\lambda M^{-1}$
and $\bG^{\sp}$ into $\GL_n$. Then the two representations $i_1$ and  $i_2\circ b_\lambda$ are isomorphic.
Therefore, $b_\lambda$ is just a conjugation by a matrix $B_\lambda\in\GL_n(\overline E_\lambda)$
that is identity on $M \bT_\lambda M^{-1}=\bT^{\sp}$.
\end{proof}

\subsection{Forms of reductive chains} \label{Chainform}
This section is foundational to the proofs of the main theorems
and is developed from \cite[$\mathsection4$]{Hu18}.

\subsubsection{Galois cohomology}\label{Galcoho}

Let $F$ be a field of characteristic zero, $\bG_1$ and $\bG'_1$ be linear algebraic groups defined over $F$.
The Galois group $\Gal(\overline F/F)$ acts (on the left) on the set of $\overline F$-homomorphisms
$\phi: \bG_1\times_F\overline F\to \bG'_1\times_F\overline F$ as follows: 
if $\sigma\in\Gal(\overline F/F)$, then ${}^\sigma\! \phi$ is the homomorphism such that
\begin{equation*}\label{gal}
{}^\sigma\! \phi(x)=\sigma(\phi(\sigma^{-1} x))\hspace{.1in}\forall x\in\bG_1(\overline F).
\end{equation*}

\noindent Let $\bG_k\subset\cdots\subset\bG_2\subset\bG_1$ be a chain of linear algebraic groups defined over $F$.
An \emph{$F$-form of the chain} $\bG_k\subset\cdots\subset\bG_2\subset\bG_1$ is
 a chain of reductive groups $\bG'_k\subset\cdots\subset\bG'_2\subset\bG'_1$ defined over $F$
that is isomorphic to $\bG_k\subset\cdots\subset\bG_2\subset\bG_1$ over $\overline F$, i.e.,
there exists a $\overline F$-homomorphism $\phi:\bG_1\times_F\overline F\to \bG'_1\times_F\overline F$
such that $\phi(\bG_i\times_F\overline F)\subset\bG'_i\times_F\overline F$ and the restriction 
$\phi|_{\bG_i\times_F\overline F}$ is an isomorphism for all $1\leq i\leq k$.
Since the groups are defined over $F$, the $\overline F$-homomorphism ${}^\sigma\! \phi$ is also a $\overline F$-isomorphism 
between the two chains.
In particular, the \emph{automorphism group $\Aut_{\overline F}(\bG_1,\bG_2,...,\bG_k)$ of the chain}  (i.e,
the subgroup of the automorphism group $\Aut_{\overline F}\bG_1$ of $\bG_1\times_F\overline F$ 
preserving the chain $\bG_k\subset\cdots\subset\bG_2\subset\bG_1$)
is a $\Gal(\overline F/F)$-group.
Let $\phi:\bG_1\times_F\overline F\to \bG'_1\times_F\overline F$ be a $\overline F$-isomorphism 
from $\bG_k\subset\cdots\subset\bG_2\subset\bG_1$ to $\bG'_k\subset\cdots\subset\bG'_2\subset\bG'_1$.
Then the association 
\begin{equation}\label{assoc}
\sigma\mapsto a_\sigma:=\phi^{-1}\circ{}^\sigma\! \phi\in \Aut_{\overline F}(\bG_1,\bG_2,...,\bG_k)
\end{equation}
for all $\sigma\in\Gal(\overline F/F)$ satisfies the $1$-cocycle condition:
$$a_{\sigma\sigma'}=a_\sigma {}^{\sigma}\! a_{\sigma'},$$
producing a bijective correspondence (see \cite[Ch. 3.1, Prop. 5 and its proof]{Se97}) between 
the set of isomorphism classes of $F$-forms of the chain $\bG_k\subset\cdots\subset\bG_2\subset\bG_1$ and 
the Galois cohomology pointed set $H^1(F,\Aut_{\overline F}(\bG_1,\bG_2,...,\bG_k))$ in which
the \emph{neutral element} is the trivial class $[a_\sigma=id]$ corresponding to
the $F$-isomorphism class of $\bG_k\subset\cdots\subset\bG_2\subset\bG_1$.

Let $\Inn_{\overline F}\bG_1$ be the inner automorphism group of $\bG_1\times_F\overline F$.
It is a ($\Gal(\overline F/F)$-) normal subgroup of $\Aut_{\overline F}\bG_1$. Denote the \emph{inner automorphism group of 
the chain} by 
$$\Inn_{\overline F}(\bG_1,\bG_2,...,\bG_k):=\Aut_{\overline F}(\bG_1,\bG_2,...,\bG_k)\cap \Inn_{\overline F}\bG_1.$$
and the \emph{outer automorphism group of the chain} by 
$$\Out_{\overline F}(\bG_1,\bG_2,...,\bG_k):=\Aut_{\overline F}(\bG_1,\bG_2,...,\bG_k)/\Inn_{\overline F}(\bG_1,\bG_2,...,\bG_k).$$
Then we obtain a short exact sequence of $\Gal(\overline F/F)$-groups
\begin{equation}\label{inout1}
1\to \Inn_{\overline F}(\bG_1,\bG_2,...,\bG_k)\to \Aut_{\overline F}(\bG_1,\bG_2,...,\bG_k)\to \Out_{\overline F}(\bG_1,\bG_2,...,\bG_k)\to 1.
\end{equation}
and an exact sequence of pointed set \cite[Ch. 1.5.5, Prop. 38]{Se97}
\begin{equation}\label{inout2}
H^1(F,\Inn_{\overline F}(\bG_1,\bG_2,...,\bG_k))\stackrel{i}{\rightarrow}
 H^1(F,\Aut_{\overline F}(\bG_1,\bG_2,...,\bG_k))\stackrel{\pi}{\rightarrow} H^1(F,\Out_{\overline F}(\bG_1,\bG_2,...,\bG_k)).
\end{equation}
The exactness means that the preimage 
$\pi^{-1}([id])$ is equal to the image $\mathrm{Im}(i)$.

An $F$-form $\bG'_k\subset\cdots\subset\bG'_2\subset\bG'_1$ of $\bG_k\subset\cdots\subset\bG_2\subset\bG_1$
is called an \emph{inner $F$-form (or inner form)} if there exists an $\overline F$-isomorphism $\phi$ such that in \eqref{assoc},
the element $a_\sigma$ belongs to $\Inn_{\overline F}(\bG_1,\bG_2,...,\bG_k)$ for all $\sigma$.
In general, the isomorphism classes of inner $F$-forms do not form a subset of 
the isomorphism classes of $F$-forms since the map $i$ in \eqref{inout2}
is not injective. However, the sequence \eqref{inout2} is a short exact sequence 
of pointed sets (and thus $i$ is injective) if \eqref{inout1} splits.
We will see in later sections that the splitting of \eqref{inout1} holds for some chains (e.g., $\bT^{\sp}\subset\bG^{\sp}$).
The following simple lemma is useful to study the conjugacy class of a subgroup in $\GL_{n,F}$.

\begin{lemma}\label{equivrepn}
Let $D$ be a central division algebra over $F$.
Let $\bU=\GL_{m,D}$ be an $F$-inner form of $\GL_{n,F}$, $\bT\subset\bG\subset\GL_{n,F}$ and 
$\bT'\subset\bG'\subset\bU$ be two chains.
If the two chains of $\overline F$-representations $(\bT\subset\bG\hookrightarrow\GL_{n,F})\times_F\overline F$ 
and $(\bT'\subset\bG'\hookrightarrow\bU)\times_F\overline F$ are isomorphic, then the following hold.
\begin{enumerate}[(i)]
\item The chain $\bT'\subset\bG'\subset\bU$ is an inner form of $\bT\subset\bG\subset\GL_{n,F}$.
\item If the cohomology class $[\bT'\subset\bG'\subset\bU]\in H^1(F,\Inn_{\overline F}(\GL_{n,F},\bG,\bT))$ is the neutral class,
then $D=F$ and the two $F$-representations $\bT\subset\bG\hookrightarrow \GL_{n,F}$ and $\bT'\subset\bG'\hookrightarrow\bU=\GL_{n,F}$ are isomorphic.
\end{enumerate}
\end{lemma}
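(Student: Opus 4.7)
The plan is to extract both statements from the Galois cohomology framework of Section \ref{Galcoho}, using crucially that an isomorphism of chain representations in the paper's sense is induced by a linear isomorphism of the underlying modules, hence (after suitable identifications) is inner in the ambient general linear group. Fix an $\overline F$-trivialization $D\otimes_F\overline F\cong M_d(\overline F)$ identifying $\bU\times_F\overline F$ with $\GL_{n,\overline F}$; under this identification, any $\overline F$-isomorphism of chain representations has the form $\phi=\Inn(M)$ for some $M\in\GL_n(\overline F)$.

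For (i), let $a_\sigma:=\phi^{-1}\circ{}^\sigma\phi\in \Aut_{\overline F}(\GL_{n,F},\bG,\bT)$ be the associated cocycle. Its image in $H^1(F,\Aut_{\overline F}\GL_{n,F})$ is the class cutting out $\bU$ as an $F$-form of $\GL_{n,F}$, which by hypothesis is inner and therefore projects to the trivial class in $H^1(F,\Out_{\overline F}\GL_{n,F})$. From the definition $\Inn_{\overline F}(\GL_{n,F},\bG,\bT)=\Aut_{\overline F}(\GL_{n,F},\bG,\bT)\cap\Inn_{\overline F}\GL_{n,F}$, the homomorphism $\Out_{\overline F}(\GL_{n,F},\bG,\bT)\hookrightarrow\Out_{\overline F}\GL_{n,F}$ is injective; since $\Out_{\overline F}\GL_{n,F}$ is at most $\Z/2\Z$ with trivial Galois action (generated by the $F$-defined transpose-inverse), the induced map on $H^1$ remains injective. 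Consequently the image of $[a_\sigma]$ in $H^1(F,\Out_{\overline F}(\GL_{n,F},\bG,\bT))$ is trivial, and exactness of \eqref{inout2} promotes $[a_\sigma]$ to the image of $H^1(F,\Inn_{\overline F}(\GL_{n,F},\bG,\bT))$, which is precisely the statement that $(\bT'\subset\bG'\subset\bU)$ is an inner $F$-form.

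For (ii), note that the cocycle $a_\sigma$ of the linear map $\phi=\Inn(M)$ automatically lies in $\Inn_{\overline F}(\GL_{n,F},\bG,\bT)$ (since $\phi$ is itself inner and preserves $\bG$, $\bT$). The hypothesis that this class is trivial yields $a_\sigma=\Inn(h)\cdot\Inn({}^\sigma h)^{-1}$ (equality in $\PGL_n$) for some $h\in N_{\GL_n}(\bG,\bT)(\overline F)$. The twisted map $\tilde\phi:=\phi\circ\Inn(h^{-1})=\Inn(Mh^{-1})$ then has trivial cocycle modulo $Z(\GL_n)(\overline F)=\overline F^\times$; Hilbert 90 applied to the residual $\overline F^\times$-valued cocycle $(Mh^{-1})^{-1}\cdot{}^\sigma(Mh^{-1})$ allows one to replace $Mh^{-1}$ by a scalar multiple so that it becomes $F$-rational, making $\tilde\phi$ both $F$-defined and induced by an $F$-linear isomorphism $V\to W$. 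The $F$-isomorphism $\tilde\phi:\GL_{n,F}\cong\bU=\GL_{m,D}$ forces $D=F$ (and hence $m=n$) by comparing $F$-split ranks, after which the linear map realizes the two $F$-chain representations as isomorphic.

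The main technical care lies in ensuring that $\tilde\phi$ remains of the form $\Inn(\cdot)$ for a genuine element of $\GL_n(\overline F)$ at each stage (not merely in $\PGL_n$), so that the Hilbert 90 adjustment converts the $F$-defined automorphism of $\GL_n$ into an actual $F$-linear isomorphism of the representation modules. Once this bookkeeping is fixed, the rank comparison forcing $D=F$ and the exactness calculation for (i) are straightforward.
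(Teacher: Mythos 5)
Your proof is correct, but part (i) takes a noticeably longer route than the paper's. The paper's entire argument for (i) is the observation that an isomorphism of \emph{chain representations} is by definition induced by a linear isomorphism of the ambient modules, hence (after identifying $\bU\times_F\overline F$ with $\GL_{n,\overline F}$) is an \emph{inner} automorphism $\psi=\Inn(M)$ of $\GL_{n,\overline F}$; consequently $a_\sigma=\psi^{-1}\circ{}^\sigma\psi$ is itself inner and preserves $\bG,\bT$, so $a_\sigma\in\Inn_{\overline F}(\GL_{n,F},\bG,\bT)$ on the nose and (i) is done. You in fact make exactly this observation at the start of your part (ii) ("the cocycle $a_\sigma$ of the linear map $\phi=\Inn(M)$ automatically lies in $\Inn_{\overline F}(\GL_{n,F},\bG,\bT)$"), but you do not notice that it renders the entire outer-automorphism argument in your part (i) unnecessary. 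Your (i) argument — pushing the class to $H^1(F,\Out_{\overline F}\GL_{n,F})$, using that the outer class of $\GL_{m,D}$ is trivial, and then using injectivity of $\Out_{\overline F}(\GL_{n,F},\bG,\bT)\hookrightarrow\Out_{\overline F}\GL_{n,F}\cong\Z/2\Z$ on $H^1$ — is valid, and it has the modest virtue of applying even if one had forgotten that $\phi$ is linear, but it is a detour; the paper's observation is the efficient route and is worth internalizing.

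Your part (ii) matches the paper's proof in substance: trivialize the cocycle by some $\gamma\in\Inn_{\overline F}(\GL_{n,F},\bG,\bT)$, observe that $\phi\circ\gamma^{-1}$ becomes Galois-invariant hence an $F$-defined isomorphism onto $\bU$, conclude $D=F$, and lift the resulting element of $\PGL_n(F)$ to $\GL_n(F)$ (Hilbert 90). One small slip to fix: you write the coboundary condition as $a_\sigma=\Inn(h)\cdot\Inn({}^\sigma h)^{-1}$ but then set $\tilde\phi=\phi\circ\Inn(h^{-1})$. With that sign convention the cocycle of $\tilde\phi$ comes out as $\Inn(h)\circ a_\sigma\circ\Inn({}^\sigma h)^{-1}=\Inn(h^2)\circ\Inn({}^\sigma h)^{-2}$, which is not trivial; you want either $a_\sigma=\Inn(h)^{-1}\cdot\Inn({}^\sigma h)$ (the paper's convention) with $\tilde\phi=\phi\circ\Inn(h^{-1})$, or your stated convention with $\tilde\phi=\phi\circ\Inn(h)$. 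With that correction your "trivial modulo $\overline F^\times$, then Hilbert 90" bookkeeping is exactly right, and in fact makes explicit a step the paper leaves implicit (that an element of $\PGL_n(F)$ lifts to $\GL_n(F)$).
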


\begin{proof}
Identify 
$\bU\times_F\overline F$ with $\GL_{n,\overline F}$.
The condition implies that there exists an $\overline F$-inner automorphism $\psi$ of $\GL_{n,\overline F}$ 
such that $\psi(\bG\times_F\overline F)=\bG'\times_F\overline F$ and $\psi(\bT\times_F\overline F)=\bT'\times_F\overline F$.
This defines a $1$-cocycle 
$$\sigma\mapsto a_\sigma:=\psi^{-1}\circ{}^\sigma\! \psi\in\Inn_{\overline F}(\GL_{n,F},\bG,\bT),$$
which proves (i). If the cocycle is neutral, then there exists $\gamma\in \Inn_{\overline F}(\GL_{n,F},\bG,\bT)\subset\PGL_n(\overline F)$
such that $a_\sigma=\gamma^{-1}\circ{}^\sigma\! \gamma$ for all $\sigma\in\Gal(\overline F/F)$.
This is equivalent to 
$$\psi\circ\gamma^{-1}={}^\sigma\! (\psi\circ\gamma^{-1})\hspace{.1in}\forall\sigma\in\Gal(\overline F/F).$$
Hence, $\psi\circ\gamma^{-1}\in\PGL_n(F)$ and $\GL_{n,F}$ and $\GL_{m,D}$ are $F$-isomorphic.
Therefore, $D=F$, $\bU=\GL_{n,F}$, and $\psi\circ\gamma^{-1}$ is an $F$-inner automorphism of $\GL_{n,F}$ taking $\bG$ to $\bG'$
as well as $\bT$ to $\bT'$,
which prove (ii).
\end{proof}

\subsubsection{Some diagrams}\label{Diag}
In this section, some diagrams of groups and Galois cohomology will be presented.
Let $F$ be a field. Denote by 

\begin{itemize}
\item $\bG^{\mathrm{sp}}$ a connected split reductive group defined over $F$,
\item $\bT^{\mathrm{sp}}$ a split maximal torus of $\bG^{\mathrm{sp}}$,
\item $\bN$ the normalizer of $\bT^{\mathrm{sp}}$ in $\bG^{\mathrm{sp}}$,
\item $W:=\bN/\bT^{\mathrm{sp}}$ the Weyl group, 
\item $\bB$ a Borel subgroup of $\bG^{\sp}$ containing $\bT^{\mathrm{sp}}$, 
\item $\bC$ the center of $\bG^{\mathrm{sp}}$,
\item $(\bG^{\sp})^{\ad}:=\bG^{\sp}/\bC$ the adjoint quotient of $\bG^{\sp}$,
\item $\Theta_{\overline F}:=\Out_{\overline F}\bG^{\sp}$ the outer automorphism group of $\bG^{\sp}$.
\item $Z^k(F,\bM):=Z^k(F,\bM(\overline F))$ the cocycles if $\bM$ is a linear algebraic group defined over $F$.
\item $H^k(F,\bM):=H^k(F,\bM(\overline F))$ the cohomology if $\bM$ is a linear algebraic group defined over $F$.
\end{itemize}

\paragraph{} Consider the following diagram of $\Gal(\overline F/F)$-groups:

\begin{equation}\label{ses1}
\begin{aligned}
\xymatrix{
1\ar[r] &\bN/\bC(\overline F) \ar@{^{(}->}[d] \ar[r]^{i\hspace{.2in}} &\Aut_{\overline F}(\bG^{\sp},\bT^{\sp}) \ar@{^{(}->}[d]_{\mathrm{Res}_{\bG^{\sp}}}\ \ar[r]^{\hspace{.3in}\pi}& \Theta_{\overline F}\ar[d]_{=}\ar[r]& 1\\
1\ar[r] & (\bG^{\sp})^{\ad}(\overline F)\ar[r]^i & \Aut_{\overline F}\bG^{\sp}\ar[r]^\pi & \Theta_{\overline F} \ar[r] &1}
\end{aligned}
\end{equation}

\vspace{.1in}
\noindent  where the top (resp. bottom) row is \eqref{inout1} for $\bT^{\sp}\subset\bG^{\sp}$ by \cite[Prop. 4.3]{Hu18} (resp. $\bG^{\sp}$)
and the vertical arrows are all natural inclusions induced by restricting automorphisms to $\bG^{\sp}$:
\begin{equation}\label{resG} 
\mathrm{Res}_{\bG^{\sp}}:\Aut_{\overline F}(\bG^{\sp},\bT^{\sp})\to \Aut_{\overline F}\bG^{\sp}.
\end{equation}
Since $\bG^{\mathrm{sp}}$ is split, the Galois group $\Gal(\overline F/F$) acts trivially on the outer automorphism group $\Theta_{\overline F}$. The proposition below is well-known.

\begin{customprop}{F}\label{split}(see e.g. \cite[Prop. 4.1]{Hu18})
The automorphism group $\Aut_{\overline F}\bG^{\sp}$ contains a $\Gal(\overline F/F)$-invariant subgroup
that preserves $\bT^{\sp}$ and $\bB$ and is mapped isomorphically onto $\Out_{\overline F}\bG^{\sp}$.
Hence, the top (resp. bottom) row in \eqref{ses1} is a split short exact sequence of $\Gal(\overline F/F)$-groups:
\vspace{-1em}
\begin{equation}\label{jsplit}
\xymatrix{1\ar[r] &\bN/\bC(\overline F) \ar[r]^{i\hspace{.2in}} &\Aut_{\overline F}(\bG^{\sp},\bT^{\sp}) \ar[r]^{\hspace{.3in}\pi}& 
\Theta_{\overline F}\ar@/_1.5pc/[l]_j \ar[r]& 1.}
\end{equation}
\end{customprop}

\vspace{.1in}
\noindent Denote by 
\begin{itemize}
\item $\Omega_{\overline F}:=\mathrm{Im}(\mathrm{Res}_{\bT^{\sp}})$, where $\mathrm{Res}_{\bT^{\sp}}$ 
restricts automorphisms to $\bT^{\sp}$:
\end{itemize}
\begin{equation}\label{resT}
\mathrm{Res}_{\bT^{\sp}}:\Aut_{\overline F}(\bG^{\sp},\bT^{\sp})\to \Aut_{\overline F}\bT^{\sp}.
\end{equation}

\noindent Then the first row in \eqref{ses1} also fits into the following diagram of $\Gal(\overline F/F)$-groups with exact rows and columns 
by \cite[Prop. 4.3]{Hu18} and $j$ denotes a splitting induced by \eqref{jsplit}.

\begin{equation}\label{ses2}
\begin{aligned}
\xymatrix{
&1\ar[d] &1\ar[d] &1\ar[d] &\\
1\ar[r] &\bT^{\mathrm{sp}}/\bC(\overline F) \ar[d] \ar[r] &\bT^{\mathrm{sp}}/\bC(\overline F) \ar[d]\ar[r] &1\ar[d]\ar[r] & 1\\
1\ar[r] &\bN/\bC(\overline F) \ar[d] \ar[r]^{i\hspace{.2in}} &\Aut_{\overline F}(\bG^{\mathrm{sp}},\bT^{\mathrm{sp}}) \ar[d]_{\mathrm{Res}_{\bT^{\sp}}} \ar[r]^{\hspace{.3in}\pi}& \Theta_{\overline F}\ar@/_1.5pc/[l]_j\ar[d]_{=}\ar[r]& 1\\
1\ar[r] & W\ar[r]^i\ar[d] & \Omega_{\overline F}\ar[r]^\pi\ar[d] & \Theta_{\overline F}\ar[d] \ar[r] &1\\
 &1&1&1&}
\end{aligned}
\end{equation}

\paragraph{}
Suppose given a faithful (absolutely) irreducible representation $\bG^{\sp}\hookrightarrow\GL_{n,F}$.
Then we have the chain $\bT^{\sp}\subset\bG^{\sp}\subset\GL_{n,F}$.
The irreducibility condition implies that $\bC$ is contained in the subgroup of scalars in $\GL_{n,F}$ and 
the following inclusions hold .
 
\begin{equation}\label{inclu1}
\begin{aligned}
\xymatrix{
\bN/\bC(\overline F)\ar@{^{(}->}[d] \ar@{^{(}->}[r ]& \Inn_{\overline F}(\GL_{n,F},\bG^{\sp},\bT^{\sp})\ar@{^{(}->}[d]_{\mathrm{Res}_{(\GL_{n,F},\bG^{\sp})}}\ar@{^{(}->}[r]&\Aut_{\overline F}(\bG^{\sp},\bT^{\sp})\ar@{^{(}->}[d]_{\mathrm{Res}_{\bG^{\sp}}}\\
(\bG^{\sp})^{\ad}(\overline F)\ar@{^{(}->}[r]&\Inn_{\overline F}(\GL_{n,F},\bG^{\sp})\ar@{^{(}->}[r] &\Aut_{\overline F}\bG^{\sp}}
\end{aligned}
\end{equation}

\noindent In diagram \eqref{ses2}, denote by
\begin{itemize}
\item $\theta_{\overline F}:=\pi(\Inn_{\overline F}(\GL_{n,F},\bG^{\sp},\bT^{\sp}))\in\Theta_{\overline F}$, 
\item $\omega_{\overline F}:=\mathrm{Res}_{\bT^{\sp}}(\Inn_{\overline F}(\GL_{n,F},\bG^{\sp},\bT^{\sp}))\in\Omega_{\overline F}$. 
\end{itemize}
\vspace{.1in}

\noindent By diagrams \eqref{ses1}, \eqref{ses2}, \eqref{inclu1} and the fact that the squares in \eqref{inclu1} 
are Cartesian, we obtain the following two diagrams with exact rows and columns.
Moreover,  \eqref{ses3} injects naturally into \eqref{ses1}, \eqref{ses4} injects naturally into \eqref{ses2},
and $j$ denotes the splitting induced by \eqref{jsplit}.

\begin{equation}\label{ses3}
\begin{aligned}
\xymatrix{
1\ar[r] &\bN/\bC(\overline F) \ar@{^{(}->}[d] \ar[r]^{i\hspace{.5in}} &\Inn_{\overline F}(\GL_{n,F},\bG^{\sp},\bT^{\sp}) \ar@{^{(}->}[d]_{\mathrm{Res}_{(\GL_{n,F},\bG^{\sp})}}\ \ar[r]^{\hspace{.5in}\pi}& \theta_{\overline F}\ar@/_1.5pc/[l]_j\ar[d]_{=}\ar[r]& 1\\
1\ar[r] & (\bG^{\sp})^{\ad}(\overline F)\ar[r]^{i\hspace{.3in}} & \Inn_{\overline F}(\GL_{n,F},\bG^{\sp})\ar[r]^{\hspace{.5in}\pi} & \theta_{\overline F}\ar@/_1.5pc/[l]_j \ar[r] &1}
\end{aligned}
\end{equation}

\vspace{.2in}
\begin{equation}\label{ses4}
\begin{aligned}
\xymatrix{
&1\ar[d] &1\ar[d] &1\ar[d] &\\
1\ar[r] &\bT^{\mathrm{sp}}/\bC(\overline F) \ar[d] \ar[r] &\bT^{\mathrm{sp}}/\bC(\overline F) \ar[d]\ar[r] &1\ar[d]\ar[r] & 1\\
1\ar[r] &\bN/\bC(\overline F) \ar[d] \ar[r]^{i\hspace{.5in}} &\Inn_{\overline F}(\GL_{n,F},\bG^{\sp},\bT^{\sp}) \ar[d]_{\mathrm{Res}_{\bT^{\sp}}} \ar[r]^{\hspace{.5in}\pi}& \theta_{\overline F}\ar@/_1.5pc/[l]_j\ar[d]_{=}\ar[r]& 1\\
1\ar[r] & W\ar[r]^i\ar[d] & \omega_{\overline F}\ar[r]^\pi\ar[d] & \theta_{\overline F}\ar[d] \ar[r] &1\\
 &1&1&1&}
\end{aligned}
\end{equation}

\vspace{.2in}
\paragraph{}
By taking Galois cohomology on diagrams \eqref{ses1},\eqref{ses2},\eqref{ses3},\eqref{ses4}, the splitting $j$, 
and Hilbert's Theorem 90: $H^1(F,\bT^{\mathrm{sp}}/\bC)=H^1(F,\mathbb{G}_m)^{\oplus k}=0$, we obtain the following diagrams of pointed sets
such that the rows and columns are all exact. Moreover, there are natural maps from \eqref{ses7} 
to \eqref{ses5}, \eqref{ses8} to \eqref{ses6}, and $j$ denotes again the splitting.

\begin{equation}\label{ses5}
\begin{aligned}
\xymatrix{
0\ar[r] & H^1(F,\bN/\bC) \ar[d] \ar[r]^{i\hspace{.4in}} & H^1(F,\Aut_{\overline F}(\bG^{\sp},\bT^{\sp})) 
\ar[d]_{\mathrm{Res}_{\bG^{\sp}}}\ \ar[r]^{\hspace{.5in}\pi}& H^1(F,\Theta_{\overline F})\ar@/_1.5pc/[l]_j\ar[d]_{=}\ar[r]& 0\\
0\ar[r] & H^1(F,(\bG^{\sp})^{\ad})\ar[r]^i & H^1(F,\Aut_{\overline F}\bG^{\sp})\ar[r]^\pi & H^1(F,\Theta_{\overline F})\ar@/_1.5pc/[l]_j \ar[r] &0}
\end{aligned}
\end{equation}

\vspace{.2in}

\begin{equation}\label{ses6}
\begin{aligned}
\xymatrix{
&0\ar[d] &0\ar[d] &0\ar[d] &\\
0\ar[r] &H^1(F,\bN/\bC) \ar[d] \ar[r]^{i\hspace{.4in}} &H^1(F,\Aut_{\overline F}(\bG^{\mathrm{sp}},\bT^{\mathrm{sp}})) \ar[d]_{\mathrm{Res}_{\bT^{\sp}}} \ar[r]^{\hspace{.5in}\pi}& H^1(F,\Theta_{\overline F})\ar@/_1.5pc/[l]_j\ar[d]_{=}\ar[r]& 0\\
0\ar[r] & H^1(F,W)\ar[r]^i & H^1(F,\Omega_{\overline F})\ar[r]^\pi & H^1(F,\Theta_{\overline F}) \ar[r] &0
}
\end{aligned}
\end{equation}

\begin{equation}\label{ses7}
\begin{aligned}
\xymatrix{
0\ar[r] &H^1(F,\bN/\bC) \ar[d] \ar[r]^{i\hspace{.7in}} &
H^1(F,\Inn_{\overline F}(\GL_{n,F},\bG^{\sp},\bT^{\sp})) \ar[d]_{\mathrm{Res}_{(\GL_{n,F},\bG^{\sp})}}\ \ar[r]^{\hspace{.7in}\pi}& 
H^1(F,\theta_{\overline F})\ar@/_1.5pc/[l]_j\ar[d]_{=}\ar[r]& 0\\
0\ar[r] & H^1(F,(\bG^{\sp})^{\ad})\ar[r]^{i\hspace{.5in}} & H^1(F,\Inn_{\overline F}(\GL_{n,F},\bG^{\sp}))\ar[r]^{\hspace{.5in}\pi} & 
H^1(F,\theta_{\overline F})\ar@/_1.5pc/[l]_j \ar[r] &0}
\end{aligned}
\end{equation}

\vspace{.2in}
\begin{equation}\label{ses8}
\begin{aligned}
\xymatrix{
&0\ar[d] &0\ar[d] &0\ar[d] &\\
0\ar[r] &H^1(F,\bN/\bC) \ar[d] \ar[r]^{i\hspace{.7in}} &
H^1(F,\Inn_{\overline F}(\GL_{n,F},\bG^{\sp},\bT^{\sp})) \ar[d]_{\mathrm{Res}_{\bT^{\sp}}} \ar[r]^{\hspace{.7in}\pi}
& H^1(F,\theta_{\overline F})\ar@/_1.5pc/[l]_j\ar[d]_{=}\ar[r]& 0\\
0\ar[r] & H^1(F,W)\ar[r]^i & H^1(F,\omega_{\overline F})\ar[r]^\pi& H^1(F,\theta_{\overline F}) \ar[r] &0
% &H^2(F,\bT^{\mathrm{sp}}/\bC)&H^2(F,\bT^{\mathrm{sp}}/\bC)&0&
}
\end{aligned}
\end{equation}

\subsection{Twisting}\label{Twist}

Let $G$ be a profinite group and $A$ be a $G$-group (a discrete group on which $G$ acts continuously).
The Galois cohomology $H^1(G,A)$ is a pointed set with neutral element given by the trivial class $[id_A]$.
Let $1\to A\stackrel{i}{\rightarrow} B\stackrel{\pi}{\rightarrow} C\to 1$ be a 
short exact sequence of $G$-groups. Then
one obtains an exact sequence of pointed sets 
$$H^1(G,A)\stackrel{i}{\to} H^1(G,B)\stackrel{\pi}{\to} H^1(G,C),$$
meaning that the image of $i$ is equal to $\pi^{-1}([id_C])=\pi^{-1}(\pi([id_B]))$,
the fiber of $\pi([id_B])$. 
Let $[\beta]\in H^1(G,B)$ be a cohomology class. To study the image of $\pi$ as well as the fiber of $\pi([\beta])$,
that is, the set $\pi^{-1}(\pi([\beta]))$, one uses the method of \emph{twisting} in \cite[Ch. 1.5.3--1.5.7]{Se97}.
This technique will be applied to some short exact sequences in $\mathsection\ref{fiberofpi}$.

\subsubsection{Definition}
Let $G$ be a group, $M$ a (left) $G$-group, and $A$ (resp. $B$) be a $M$-group on which $G$ acts compatibly on the left,
i.e., $g(m(a))=g(m)(g(a))$ for $g\in G$, $m\in M$, and $a\in A$.
Suppose $\mu:=(m_g)\in Z^1(G,M)$ is a $1$-cocycle.
Then one can define a $G$-group ${}_\mu\! A$ \emph{twisted by} $\mu$, which can be viewed as $A$ with a new $G$-action:
as a group ${}_\mu\! A=A$ and the $G$-action is defined by 
\begin{equation}
\begin{aligned}
G\times {}_\mu\! A &\longrightarrow {}_\mu\! A\\
(g,a)&\mapsto m_g(g(a)).
\end{aligned}
\end{equation}

\noindent As $M$ acts on itself by inner automorphism (conjugation): $(-)\mapsto m(-)m^{-1}$,
denote by ${}_\mu\! M$ the twisted $G$-group. Then ${}_\mu\! A$ is a  ${}_\mu\! M$-group 
under the identification 
\begin{equation}\label{ident1}
\begin{aligned}
\xymatrix{
{}_\mu\! M\times {}_\mu\! A\ar[r]\ar[d]_{=} &{}_\mu\! A\ar[d]_{=}\\
M\times A\ar[r] & A}
\end{aligned}
\end{equation}
\noindent on which $G$ acts compatibly on the left.
If $\mu,\mu'\in Z^1(G,M)$ are cohomologous, then  ${}_\mu\! A$ and ${}_{\mu'}\! A$ are isomorphic. 
The association $A\mapsto {}_\mu\! A$ is functorial: if $f:A\to B$ is a $G$-, $M$-group homomorphism, then 
${}_\mu\!f:{}_\mu\! A\to {}_\mu\! B$ is a $G$-, ${}_\mu\!M$-group homomorphism \cite[Ch. 1.5.3]{Se97}.
Since $A$ acts on itself by inner automorphism: $A\to \Inn(A)$, it acts on $B$ via the map
$A\to B\to \Inn(B)$ such that  $A\to B$ is an $A$-group homomorphism.
The following correspondences are crucial. 

\begin{customprop}{G}\label{twistid}\cite[Ch. 1.5.3 Prop. 35 bis]{Se97}
Let $f:A\to B$ be a $G$-group homomorphism, $\alpha=(a_g)\in Z^1(G,A)$ be a cocycle, and $\beta=(b_g)\in Z^1(G,B)$ the image of $\alpha$.
Write $A'={}_\alpha\! A$, $B'={}_\beta\! B$, and $f':A'\to B'$ the map. To each cocycle $(a'_g)\in Z^1(G,A')$ (resp. $(b'_g)\in Z^1(G,B')$),
associate the cocycle $(a'_ga_g)\in Z^1(G,A)$ (resp. $(b'_gb_g)\in Z^1(G,B)$).
This induces the following commutative diagrams such that the vertical arrows are bijective correspondence 
taking neutral cocycles (resp. classes) to $\alpha,\beta$ (resp. $[\alpha],[\beta]$).
\begin{equation}\label{ident2}
\begin{aligned}
\xymatrix{
Z^1(G,A') \ar[r]^{f'}\ar[d]_{t_\alpha} & Z^1(G,B') \ar[d]_{t_\beta} &&H^1(G,A') \ar[r]^{f'}\ar[d]_{\tau_\alpha} & H^1(G,B') \ar[d]_{\tau_\beta}\\
Z^1(G,A) \ar[r]^f & Z^1(G,B)&& H^1(G,A) \ar[r]^f & H^1(G,B)
}
\end{aligned}
\end{equation}
\end{customprop}

\noindent Therefore, $\tau_\alpha: (f')^{-1}( f'([id_{A'}]))\to f^{-1} (f([\alpha]))$  is a
bijective correspondence between  the fibers of classes. 

\subsubsection{Fibers of $\pi$}\label{fiberofpi}
Given a split short exact sequence of $G$-groups:
\begin{equation}\label{ses9}
\begin{aligned}
\xymatrix{
1\ar[r] &A \ar[r]^i &
B \ar[r]^\pi
& C\ar@/_1.5pc/[l]_j\ar[r]& 1.
}
\end{aligned}
\end{equation}
Then we obtain a split short exact sequence of pointed sets:
\vspace{-1em}
\begin{equation}\label{ses10}
\begin{aligned}
\xymatrix{
0\ar[r] &H^1(G,A) \ar[r]^i &
H^1(G,B) \ar[r]^\pi
& H^1(G,C)\ar@/_1.5pc/[l]_j\ar[r]& 0.
}
\end{aligned}
\end{equation}
\noindent 
Since $C$ acts on itself by inner automorphism, it also acts on $B$ and $A$ by the splitting $j$.
Let $\chi\in Z^1(G,C)$ be a cocycle. It can also be seen as a cocycle in $B$ via $j$. Hence, we let 
\vspace{-1em}
\begin{equation}\label{ses11}
\begin{aligned}
\xymatrix{
1\ar[r] &A' \ar[r]^{i'} &
B' \ar[r]^{\pi'}
& C'\ar@/_1.5pc/[l]_{j'}\ar[r]& 1
}
\end{aligned}
\end{equation} 
\noindent be the split short exact sequence of $G$-groups constructed by twisting \eqref{ses9} by $\chi$.
We obtain  the corollary below by Proposition \ref{twistid}.

\begin{cor}\label{twistid2}
In the diagram below, the rows are split short exact sequence of pointed sets and
the vertical arrows are bijective
with $\tau_{j(\chi)}([id_{B'}])=[j(\chi)]$, $\tau_\chi([id_{C'}])=[\chi]$, and $\tau_\chi\circ\pi'=\pi\circ\tau_{j(\chi)}$.
\vspace{-.5em}
\begin{equation}\label{ident2}
\begin{aligned}
\xymatrix{
0\ar[r] &H^1(G,A') \ar[r]^{i'} &H^1(G,B') \ar[r]^{\pi'}\ar[d]_{\tau_{j(\chi)}} & H^1(G,C')\ar@/_1.5pc/[l]_{j'}\ar[r]\ar[d]_{\tau_\chi}& 0 \\
0\ar[r] &H^1(G,A) \ar[r]^i &
H^1(G,B) \ar[r]^\pi
& H^1(G,C)\ar@/_1.5pc/[l]_j\ar[r]& 0
}
\end{aligned}
\end{equation}
\end{cor}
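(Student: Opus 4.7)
The plan is to deduce Corollary \ref{twistid2} as a direct application of Proposition \ref{twistid} to the surjection $\pi : B \to C$, taking the cocycle pair to be $(\alpha, \beta) = (j(\chi), \chi)$; the identity $\pi \circ j = \mathrm{id}$ forces $\pi(j(\chi)) = \chi$, so this choice is compatible.

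First I would reconcile the two descriptions of the twisted sequence \eqref{ses11}. The paper constructs it by viewing $A$, $B$, $C$ as $C$-groups via the splitting $j$ and twisting by $\chi$. Equivalently, all three are $B$-groups --- $B$ acts on itself and on the normal subgroup $A$ by inner automorphism, and on $C$ through $\pi$ followed by inner automorphism --- and twisting by the single cocycle $j(\chi) \in Z^1(G, B)$ produces the same twisted objects: $B' = {}_{j(\chi)} B$ by definition, ${}_{j(\chi)} C = {}_{\pi(j(\chi))} C = {}_{\chi} C = C'$ since the $B$-action on $C$ factors through $\pi$, and ${}_{j(\chi)} A = A'$ since the $C$-action on $A$ through $j$ is the restriction of $B$'s inner action on $A$. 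This single-cocycle reformulation is the minor bookkeeping step on which everything hinges.

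Next I would apply Proposition \ref{twistid} with $f = \pi$, $\alpha = j(\chi)$, and $\beta = \chi$, which immediately yields the commutative square
\begin{equation*}
\xymatrix{
H^1(G, B') \ar[r]^{\pi'} \ar[d]_{\tau_{j(\chi)}} & H^1(G, C') \ar[d]_{\tau_{\chi}} \\
H^1(G, B) \ar[r]^{\pi} & H^1(G, C)
}
\end{equation*}
with both vertical arrows bijective, and $\tau_{j(\chi)}([id_{B'}]) = [j(\chi)]$, $\tau_{\chi}([id_{C'}]) = [\chi]$ by construction. This is exactly the right half of the diagram in the statement.

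Finally, to establish the split short exact structure of both rows, I would invoke functoriality of twisting applied to the splitting $j$: the $G$-equivariant homomorphisms $i$, $\pi$, $j$ twist to $G$-equivariant homomorphisms $i'$, $\pi'$, $j'$ on the twisted groups, preserving the relations $\pi \circ i = 1$ and $\pi \circ j = \mathrm{id}$. Hence $1 \to A' \to B' \to C' \to 1$ is a split short exact sequence of $G$-groups, and passing to $H^1$ gives the split short exact sequence of pointed sets in the top row; the bottom row is the previously recorded \eqref{ses10}. The main obstacle, such as it is, is the single-cocycle bookkeeping in the second paragraph; once that identification is in hand, everything else is a mechanical invocation of Proposition \ref{twistid}.
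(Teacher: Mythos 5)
Your proof is correct and follows the same route the paper intends: the corollary is stated immediately after Proposition \ref{twistid} with the one-line comment that it is "obtained by" that proposition, and your application of it to $\pi:B\to C$ with the cocycle pair $(j(\chi),\chi)$ (using $\pi\circ j=\mathrm{id}$ to match the hypothesis $\beta=f(\alpha)$) is exactly the intended instantiation. The reconciliation you carry out in the second paragraph — identifying the paper's "twist all three groups by $\chi\in Z^1(G,C)$ via $j$" with "twist $B$ and $C$ by $j(\chi)\in Z^1(G,B)$" — and the appeal to functoriality of twisting to transport the split exact structure of \eqref{ses9} to the top row are the bookkeeping steps the paper leaves implicit, and you handle them correctly.
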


\paragraph{}
Let $\bG^{\sp}$ be a connected split reductive group defined over $F$.
By Proposition \ref{split}, there is a split short exact sequence of $\Gal(\overline F/F)$-groups
\vspace{-0.5em}
\begin{equation*}
\begin{aligned}
\xymatrix{
0\ar[r] &(\bG^{\sp})^{\ad}(\overline F) \ar[r]^{i} &
\Aut_{\overline F}\bG^{\sp} \ar[r]^{\hspace{.2in}\pi}
& \Theta_{\overline F}\ar@/_1.5pc/[l]_j\ar[r]& 0,
}
\end{aligned}
\end{equation*}

\noindent inducing a split short exact sequence of pointed sets
\vspace{-1em}
\begin{equation}\label{ses12}
\begin{aligned}
\xymatrix{
0\ar[r] &H^1(F,(\bG^{\sp})^{\ad}) \ar[r]^{i\hspace{.1in}} &
H^1(F,\Aut_{\overline F}\bG^{\sp}) \ar[r]^{\hspace{.2in}\pi}
& H^1(F,\Theta_{\overline F})\ar@/_1.5pc/[l]_j\ar[r]& 0.
}
\end{aligned}
\end{equation}

\noindent A reductive group $\bG/F$ is said to be quasi-split if $\bG$ has a Borel subgroup defined over $F$.
The group $\Theta_{\overline F}$ via $j$ is a group of $F$-automorphisms of $\bG^{\sp}/F$.
The image of $j$ in \eqref{ses12} can be characterized.

\begin{customthm}{I} (see e.g. \cite[Thm. 4.2]{Hu18} and its proof)\label{quasisplit} 
The set $j(H^1(F,\Theta_{\overline F}))$ in (\ref{ses12}) is equal to
the set of isomorphism classes of quasi-split $F$-forms of $\bG^{\sp}$. 
Moreover, if $\chi\in Z^1(F,\Theta_{\overline F})$, then the $\Gal(\overline F/F)$-group ${}_\chi\! \bG^{\sp}(\overline F)$
is the $\overline F$-points of a quasi-split connected reductive group $\bG'$ over $F$ corresponding to the
$\overline F$-isomorphism class $[\bG']=j([\chi])$.
\end{customthm}

\noindent Since the twisted automorphism group ${}_\chi\!\Aut_{\overline F}\bG^{\sp}$ acts on ${}_\chi\! \bG^{\sp}(\overline F)=\bG'(\overline F)$ by Theorem \ref{quasisplit}, the twisted group ${}_\chi\!\Aut_{\overline F}\bG^{\sp}$
is naturally isomorphic to $\Aut_{\overline F}\bG'$.
Denote by $\bG^{'\hspace{-0.2em}\ad}$ the adjoint quotient of $\bG'$.
By Corollary \ref{twistid2}, the following diagram has split short exact rows of pointed sets and
the vertical arrows are bijective with $\tau_{j(\chi)}([id])=[\bG']$ and  $\tau_\chi\circ\pi'=\pi\circ\tau_{j(\chi)}$.

\vspace{-1.5em}
\begin{equation}\label{ident3}
\begin{aligned}
\xymatrix{
0\ar[r] &H^1(F,\bG^{'\hspace{-0.2em}\ad}) \ar[r]^{i'\hspace{.1in}} &H^1(F,\Aut_{\overline F}\bG') \ar[r]^{\hspace{.2in}\pi'}\ar[d]_{\tau_{j(\chi)}} & H^1(F,\Theta_{\overline F}')\ar@/_1.5pc/[l]_{j'}\ar[r]\ar[d]_{\tau_\chi}& 0 \\
0\ar[r] &H^1(F,(\bG^{\sp})^{\ad}) \ar[r]^{i\hspace{.1in}} &
H^1(F,\Aut_{\overline F}\bG^{\sp}) \ar[r]^{\hspace{.2in}\pi}
& H^1(F,\Theta_{\overline F})\ar@/_1.5pc/[l]_j\ar[r]& 0
}
\end{aligned}
\end{equation}

\vspace{.1in}
\begin{remark}\label{inner}~
\begin{enumerate}[(1)]
\item The middle vertical correspondence $\tau_{j(\chi)}$ in \eqref{ident3} is the identity map if
we identify the set of isomorphism classes of $F$-forms of $\bG'$ with that of $\bG^{\sp}$ in a natural way.
\item The twisted group $\Theta'_{\overline F}$ is naturally isomorphic to $\Out_{\overline F}\bG'$
and corresponds via $j'$ to the set of isomorphism classes of quasi-split $F$-forms of $\bG'$.
\item Let $\bG_1$ and $\bG_2$ be two $F$-forms of $\bG^{\sp}$. The form $\bG_1$ is said to be 
an inner form of $\bG_2$ if $\pi([\bG_1])=\pi([\bG_2])$. By Theorem \ref{quasisplit}, any $F$-form 
$\bG_1$ is an inner form of a unique quasi-split $F$-form $\bG'$.
\end{enumerate}
\end{remark}

\paragraph{} Similarly, let $\chi\in Z^1(F,\theta_{\overline F})$ and twist 
the second row of \eqref{ses3} by $\chi$. 
Then we obtain an $F$-form $\bG'\subset\GL_{m',D'}$ of the chain $\bG^{\sp}\subset\GL_{n,F}$,
where $\bG'$ is a quasi-split $F$-form of $\bG^{\sp}$ and
$\GL_{m',D'}$ is an inner form of $\GL_{n,F}$ (for some central division algebra $D'$ over $F$).
Since $\bG'$ is quasi-split and the tautological representation is absolutely irreducible,
it follows that $\GL_{m',D'}=\GL_{n,F}$ \cite[Thm. 3.3]{Ti71} and the $F$-form is
\begin{equation}\label{chainform}
\bG'\subset\GL_{n,F}
\end{equation} 
such that the following diagram has split short exact rows of pointed sets and the vertical 
arrows are bijective with $\tau_{j(\chi)}([id])=[\bG'\subset \GL_{n,F}]$ and $\tau_\chi\circ\pi'=\pi\circ\tau_{j(\chi)}$.

\vspace{-1em}
\begin{equation}\label{ident4}
\begin{aligned}
\xymatrix{
0\ar[r] &H^1(F,\bG^{'\hspace{-0.2em}\ad}) \ar[r]^{i'\hspace{.4in}} &H^1(F,\Inn_{\overline F}(\GL_{n,F},\bG')) \ar[r]^{\hspace{.5in}\pi'}\ar[d]_{\tau_{j(\chi)}} & H^1(F,\theta_{\overline F}')\ar@/_1.5pc/[l]_{j'}\ar[r]\ar[d]_{\tau_\chi}& 0 \\
0\ar[r] &H^1(F,(\bG^{\sp})^{\ad}) \ar[r]^{i\hspace{.4in}} &
H^1(F,\Inn_{\overline F}(\GL_{n,F},\bG^{\sp})) \ar[r]^{\hspace{.5in}\pi}
& H^1(F,\theta_{\overline F})\ar@/_1.5pc/[l]_j\ar[r]& 0
}
\end{aligned}
\end{equation}

\begin{cor}\label{fiber}
The fiber $\pi^{-1}([\chi])$ in \eqref{ident3} (resp. \eqref{ident4}) can be identified with $H^1(F,\bG^{'\hspace{-0.2em}\ad})$.
\end{cor}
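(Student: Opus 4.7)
The plan is to combine the twisting bijection from Corollary \ref{twistid2} with the injectivity of $i'$ coming from the splitting. Start from the diagram \eqref{ident3}: the bottom row is the split short exact sequence whose middle term's fiber we want to understand, while the top row is its twist by $\chi$, so $(\pi')^{-1}([id_{\Theta'_{\overline F}}])$ is the fiber above the neutral element of $H^1(F,\Theta'_{\overline F})$.

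First I would use Corollary \ref{twistid2} to observe that the middle vertical arrow $\tau_{j(\chi)}$ is a bijection of pointed sets sending $[id]$ to $[\bG']=[j(\chi)]$, and that it is compatible with $\tau_\chi$ on the right, which likewise is a bijection sending $[id_{\Theta'_{\overline F}}]$ to $[\chi]$. Because these are bijections between the underlying sets that intertwine $\pi'$ and $\pi$, the restriction
\begin{equation*}
\tau_{j(\chi)}\colon (\pi')^{-1}([id_{\Theta'_{\overline F}}])\;\xrightarrow{\;\sim\;}\;\pi^{-1}([\chi])
\end{equation*}
is a bijection between the two fibers in question.

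Next I would analyze the top fiber $(\pi')^{-1}([id_{\Theta'_{\overline F}}])$. Since the top row of \eqref{ident3} is split exact (by Proposition \ref{split} applied to the quasi-split form $\bG'$, which provides a splitting $j'$ at the level of $\Gal(\overline F/F)$-groups), the map $i'$ is \emph{injective} and its image is exactly this fiber; this is the standard consequence of splitness for short exact sequences of pointed sets obtained from split short exact sequences of $\Gal(\overline F/F)$-groups. Therefore $i'$ realizes a bijection $H^1(F,\bG^{'\hspace{-0.2em}\ad})\xrightarrow{\sim}(\pi')^{-1}([id_{\Theta'_{\overline F}}])$.

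Composing the two bijections yields the identification
\begin{equation*}
H^1(F,\bG^{'\hspace{-0.2em}\ad})\;\xrightarrow[i']{\sim}\;(\pi')^{-1}([id_{\Theta'_{\overline F}}])\;\xrightarrow[\tau_{j(\chi)}]{\sim}\;\pi^{-1}([\chi]),
\end{equation*}
which proves the statement in the setting of \eqref{ident3}; the argument for \eqref{ident4} is identical, using diagram \eqref{ses3} and Theorem \ref{quasisplit} (together with the quasi-split plus absolutely irreducible argument giving $\GL_{m',D'}=\GL_{n,F}$) to justify that the twist of the chain is $\bG'\subset\GL_{n,F}$. There is no real obstacle here: the only subtle point to be careful about is that $i'$ is injective, which is \emph{not} automatic from exactness of pointed sets but follows because the short exact sequence is split at the level of $\Gal(\overline F/F)$-groups and hence at the level of Galois cohomology, and everything in the argument is a bookkeeping consequence of Corollary \ref{twistid2}.
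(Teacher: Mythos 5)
Your overall strategy is correct and is the one the paper intends: twist by $j(\chi)$ and $\chi$ so that, via the bijections $\tau_{j(\chi)}$ and $\tau_\chi$ of Corollary~\ref{twistid2}, the fiber $\pi^{-1}([\chi])$ in the bottom row is carried to the fiber $(\pi')^{-1}([id])$ of the twisted top row, then invoke exactness of the top row; and for \eqref{ident4} the extra point that the twisted chain is $\bG'\subset\GL_{n,F}$ is handled by the Tits argument you cite. Those parts are sound.

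The gap is the claimed injectivity of $i'$. Splitness of $1\to\bG^{'\hspace{-0.2em}\ad}(\overline F)\to\Aut_{\overline F}\bG'\to\Theta'_{\overline F}\to 1$ as a sequence of $\Gal(\overline F/F)$-groups shows that $(\Aut_{\overline F}\bG')^{\Gal(\overline F/F)}\to(\Theta'_{\overline F})^{\Gal(\overline F/F)}$ is surjective, hence that the connecting map into $H^1(F,\bG^{'\hspace{-0.2em}\ad})$ is trivial, hence that the \emph{kernel} of $i'$ (the fiber over the neutral element of $H^1(F,\Aut_{\overline F}\bG')$) is a singleton; but for maps of pointed sets, trivial kernel does not give injectivity. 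What is true is that the fibers of $i'$ are exactly the orbits of $(\Theta'_{\overline F})^{\Gal(\overline F/F)}$ acting on $H^1(F,\bG^{'\hspace{-0.2em}\ad})$ through $j'$ by conjugation \cite[Ch.\ 1.5.5]{Se97}. This action is not trivial in general: for $\bG'=\SL_n$ ($n\geq 3$) over a non-archimedean local field, the nontrivial outer automorphism acts on $H^1(F,\PGL_n)\cong\Z/n$ by $x\mapsto -x$, matching the fact that $\SL_1(D)$ and $\SL_1(D^{\mathrm{op}})$ are $F$-isomorphic even when $D\not\cong D^{\mathrm{op}}$. So what your argument actually establishes is a natural surjection $H^1(F,\bG^{'\hspace{-0.2em}\ad})\twoheadrightarrow\pi^{-1}([\chi])$ with singleton fiber over $[\bG']$ (resp.\ $[\bG'\subset\GL_{n,F}]$); the fiber $\pi^{-1}([\chi])$ is the quotient of $H^1(F,\bG^{'\hspace{-0.2em}\ad})$ by the $(\Theta'_{\overline F})^{\Gal(\overline F/F)}$-action, not $H^1(F,\bG^{'\hspace{-0.2em}\ad})$ itself. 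That weaker statement is all that is actually used in the proof of Metatheorem~\ref{meta1} (one needs to lift each $[\bG_\lambda]$ to \emph{some} class, and to know the quasi-split class lifts only to $0$), so the downstream argument is unaffected; but the sentence asserting that $i'$ is injective is false as stated and should be removed.
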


\subsubsection{Image of $\pi$}
Given a short exact sequence of $G$-groups with $A$ abelian:
\begin{equation}\label{ses13}
\begin{aligned}
\xymatrix{
1\ar[r] &A \ar[r]^i & B \ar[r]^\pi & C\ar[r]& 1.
}
\end{aligned}
\end{equation}

\noindent Then $C$ acts on $A$ naturally and there is the twisted group ${}_\chi\! A$ for every $\chi\in Z^1(G,C)$.
One associates to $\chi$ a cohomology class $\Delta(\chi)\in H^2(G,{}_\chi\! A)$ as follows.
Lift $\chi$ to a continuous map $g\mapsto b_g$ of $G$ into $B$ and define
\begin{equation}
a_{g,g'}=b_gg(b_{g'})b_{gg'}^{-1},
\end{equation}
which is a $2$-cocycle with values in ${}_\chi\! A$ \cite[Ch. 1.5.6]{Se97}.

\begin{customprop}{J}\cite[Ch. 1.5.6 Prop. 41]{Se97}
The cohomology class $[\chi]$ belongs to the image of $\pi: H^1(G,B)\to H^1(G,C)$ 
if and only if $\Delta(\chi)$ vanishes in $H^2(G,{}_\chi\! A)$.
\end{customprop}

Since the middle columns of \eqref{ses2} and \eqref{ses4}
 are short exact sequence of $\Gal(\overline F/F)$-groups with $\bT^{\sp}/\bC$ abelian,
we obtain the following.

\begin{cor}\label{image}
Let $\mu\in Z^1(F,\Omega_{\overline F})$ (resp. $Z^1(F,\omega_{\overline F})$). 
The cohomology class $[\mu]$ belongs to the image of 
$\mathrm{Res}_{\bT^{\sp}}$ in \eqref{ses6} (resp. \eqref{ses8})
if and only if $\Delta(\mu)$ vanishes in $H^2(F,{}_\mu\! (\bT^{\mathrm{sp}}/\bC))$.
\end{cor}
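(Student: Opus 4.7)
The plan is essentially to apply Proposition J verbatim to the middle columns of the two diagrams \eqref{ses2} and \eqref{ses4}.

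First, I identify the relevant short exact sequences. Reading off the middle column of \eqref{ses2} gives the short exact sequence of $\Gal(\overline F/F)$-groups
\[
1 \to \bT^{\sp}/\bC \longrightarrow \Aut_{\overline F}(\bG^{\sp},\bT^{\sp}) \xrightarrow{\mathrm{Res}_{\bT^{\sp}}} \Omega_{\overline F} \to 1,
\]
and similarly the middle column of \eqref{ses4} gives
\[
1 \to \bT^{\sp}/\bC \longrightarrow \Inn_{\overline F}(\GL_{n,F},\bG^{\sp},\bT^{\sp}) \xrightarrow{\mathrm{Res}_{\bT^{\sp}}} \omega_{\overline F} \to 1.
\]
In both cases the kernel is $\bT^{\sp}/\bC$, which is a quotient of a torus by a central subgroup and hence abelian. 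Thus the hypothesis of Proposition J (that the kernel be abelian, so that the obstruction class in $H^{2}$ is defined with values in the twisted kernel) is satisfied.

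Next, given $\mu \in Z^1(F,\Omega_{\overline F})$ (resp.\ $Z^1(F,\omega_{\overline F})$), the action of $\Omega_{\overline F}$ (resp.\ $\omega_{\overline F}$) on the abelian kernel $\bT^{\sp}/\bC$ coming from the sequence is exactly the one needed to form the twisted $\Gal(\overline F/F)$-group ${}_\mu\!(\bT^{\sp}/\bC)$ appearing in the construction of $\Delta(\mu)$. By Proposition J applied to the above short exact sequence, the class $[\mu]$ lies in the image of $\mathrm{Res}_{\bT^{\sp}}$ on $H^1$ in \eqref{ses6} (resp.\ \eqref{ses8}) if and only if the obstruction $\Delta(\mu)$ vanishes in $H^2(F, {}_\mu\!(\bT^{\sp}/\bC))$, which is the desired equivalence.

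There is no real obstacle here: the corollary is a direct specialization of Proposition J, and the only thing one needs to check is that the middle columns of \eqref{ses2} and \eqref{ses4} are genuine short exact sequences of $\Gal(\overline F/F)$-groups with abelian kernel, which is already recorded in the diagrams (and in \cite[Prop.~4.3]{Hu18}). So the proof is essentially one line invoking Proposition J.
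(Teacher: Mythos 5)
Your proposal is correct and is exactly the paper's (implicitly given) argument: the paper itself states immediately before the corollary that ``the middle columns of \eqref{ses2} and \eqref{ses4} are short exact sequences of $\Gal(\overline F/F)$-groups with $\bT^{\sp}/\bC$ abelian,'' and deduces the corollary by invoking Proposition J. You have identified those same two short exact sequences, observed the abelianness of the kernel $\bT^{\sp}/\bC$, and applied Proposition J verbatim, so there is nothing to add.
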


\subsection{Local-global aspects}\label{local-global}
\subsubsection{The localization map}
Let $E$ be a number field and $\mathcal{P}_E$ be the set of places of $E$.
Let $\bG$ be a linear algebraic group 
(or more generally an automorphism group of a reductive chain in $\mathsection\ref{Galcoho}$) defined over $E$ .
For any $\lambda\in \mathcal{P}_E$, denote by $E_\lambda$ the completion of $E$ 
with respect to $\lambda$ and by $i_\lambda:E\to E_\lambda$ the embedding.
Let $i_{\overline \lambda}:\overline E\to \overline E_\lambda$ be an embedding extending $i_\lambda$.
Then it induces homomorphisms $\Gal(\overline E_\lambda/E_\lambda)\to \Gal(\overline E/E)$
and $\bG(\overline E)\to \bG(\overline E_\lambda)$
for which  the $\Gal(\overline E_\lambda/E_\lambda)$-module  $\bG(\overline E_\lambda)$
and  $\Gal(\overline E/E)$-module  $\bG(\overline E)$ are compatible.
We obtain a map of cocycles ($k=0,1$ if $\bG$ non-abelian)
\begin{equation}\label{locbarv}
\mathrm{loc}_{\overline\lambda}: Z^k(E,\bG)\to Z^k(E_\lambda,\bG).
\end{equation}
The associated map of Galois cohomology 
\begin{equation}\label{locv}
\mathrm{loc}_\lambda: H^k(E,\bG)\to H^k(E_\lambda,\bG)
\end{equation}
is called the \emph{localization map at $\lambda$}. It is functorial and does not depend on $i_{\overline \lambda}$ \cite[Ch. 2.1.1]{Se97}.

\subsubsection{Some results} We would like to present some results for the map
\begin{equation}\label{sumloc}
\prod_{\lambda\in\mathcal{P}_E}\mathrm{loc}_\lambda: H^k(E,\bG)\to \prod_{\lambda\in\mathcal{P}_E} H^k(E_\lambda,\bG)
\end{equation}
when $\bG$ is connected reductive and $k=1$ and when $\bG$ is a torus and $k=2$. 
For simplicity, we use the notation and formulation of \cite{Bo98} although the results were 
obtained earlier by Harder \cite{Ha66}, Kneser \cite{Kn69}, Sansuc \cite{Sa81}, Kottwitz \cite{Ko86}.
Let $\Sh^k(E,\bG)$ be the kernel of the map \eqref{sumloc}. 
The reductive group $\bG$ is said to satisfy the \emph{Hasse principle} if
the \emph{Shafarevich-Tate group} $\Sh^1(E,\bG)$ of $\bG$ vanishes.

Denote  by $\overline{\bG}$ the group $\bG\times_E\overline E$, by $\overline{\bG}^{\der}$ the derived group of $\overline{\bG}$, 
 by $\overline{\bG}^{\sc}$ the simply-connected cover of $\overline{\bG}^{\der}$, by $\rho:\overline{\bG}^{\sc}\to \overline{\bG}$ the natural map, by $\overline{\bT}$ a maximal torus of $\overline{\bG}$,
 and by $\X_*$ the cocharacter functor for a torus.
The \emph{algebraic fundamental group} of $\overline{\bG}$ \cite[Def. 1.3]{Bo98}
is a $\Gal(\overline E/E)$-module defined as 
$$M:=\X_*(\overline{\bT})/\rho_*(\X_*(\rho^{-1}(\overline{\bT}))).$$
For each $\lambda\in\mathcal{P}_E$, one has a map \cite[5.15]{Bo98}
\begin{equation}\label{abmap}
\mu_\lambda: H^1(E_\lambda,\bG)\stackrel{\mathrm{ab}^1}{\longrightarrow} H^1_{\mathrm{ab}}(E_\lambda,\bG)
=\mathcal{T}^{-1}_\lambda(M)\stackrel{\mathrm{cor}^{-1}_\lambda}{\longrightarrow}\mathcal{T}^{-1}(M)=(M_{\Gal(\overline E/E)})_{\mathrm{tor}},
\end{equation}
where $H^1_{\mathrm{ab}}(E_\lambda,\bG)$ is the \emph{first abelian Galois cohomology group of} $\bG$ \cite[Definition 2.2]{Bo98} and
$(M_{\Gal(\overline E/E)})_{\mathrm{tor}}$ denotes the the torsion subgroup of the Galois coinvariants of $M$.
The  surjectivity of
\emph{abelianization map} $\mathrm{ab}^1$ is by \cite[Thm. 5.4]{Bo98}.
If $E_\lambda$ is non-Archimedean, then  $\mathcal{T}^{-1}_\lambda(M)=(M_{\Gal(\overline E_\lambda/E_\lambda)})_{\mathrm{tor}}$ 
\cite[Propositions 2.8 and 4.1(i)]{Bo98} and $\mathrm{cor}^{-1}_\lambda$ is the natural map \cite[4.7]{Bo98}.

\begin{customthm}{K}\label{local-globalred}\cite[Thm. 5.16]{Bo98}
When $k=1$, the map in \eqref{sumloc} factors through $\bigoplus_{\lambda\in\mathcal{P}_E} H^1(E_\lambda,\bG)$ and 
$$0\to \Sh^1(E,\bG)\to H^1(E,\bG)\to \bigoplus_\lambda H^1(E_\lambda,\bG)\stackrel{\oplus \mu_\lambda}{\longrightarrow}(M_{\Gal(\overline E/E)})_{\mathrm{tor}} $$
is exact.
\end{customthm}

As $M$ is finite for  semisimple $\bG$, we obtain the following.

\begin{prop}\label{local-globalss}
If $\bG$ is semisimple and $E_\lambda$ is non-Archimedean, then $\mu_\lambda$ in \eqref{abmap} is surjective.
\end{prop}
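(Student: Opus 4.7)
The plan is to factor the surjectivity of $\mu_\lambda$ through its two defining maps $\mathrm{ab}^1$ and $\mathrm{cor}^{-1}_\lambda$ and check each one is surjective under the semisimple, non-Archimedean hypotheses. The first map $\mathrm{ab}^1 : H^1(E_\lambda,\bG) \to H^1_{\mathrm{ab}}(E_\lambda,\bG)$ is surjective by the cited result of Borovoi \cite[Thm.~5.4]{Bo98}, so the only real content is the surjectivity of the corestriction-type map $\mathrm{cor}^{-1}_\lambda$.

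Next I would use the semisimplicity of $\bG$ in an essential way: for semisimple $\bG$, the algebraic fundamental group $M = \X_*(\overline{\bT})/\rho_*(\X_*(\rho^{-1}(\overline{\bT})))$ is finite (as noted in the sentence just before the proposition, since $\rho$ is then an isogeny and $M$ is the kernel of the isogeny on cocharacter lattices mod torsion, yielding a finite abelian group). Consequently the Galois coinvariants $M_{\Gal(\overline E_\lambda/E_\lambda)}$ and $M_{\Gal(\overline E/E)}$ are themselves finite, so their torsion subgroups coincide with the full coinvariants:
\[
(M_{\Gal(\overline E_\lambda/E_\lambda)})_{\mathrm{tor}} = M_{\Gal(\overline E_\lambda/E_\lambda)}, \qquad (M_{\Gal(\overline E/E)})_{\mathrm{tor}} = M_{\Gal(\overline E/E)}.
\]

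Under the embedding $i_{\overline\lambda}$ the local Galois group $\Gal(\overline E_\lambda/E_\lambda)$ is identified with a (closed) subgroup of the global Galois group $\Gal(\overline E/E)$. The proposition's preamble tells us that in the non-Archimedean case $\mathrm{cor}^{-1}_\lambda$ is the \emph{natural} map on coinvariants, i.e.\ the canonical projection from coinvariants under the smaller group to coinvariants under the larger one. Such a projection is always surjective, since quotienting further by the extra relations from $\Gal(\overline E/E)$ can only collapse more elements together.

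Putting the pieces together: $\mu_\lambda = \mathrm{cor}^{-1}_\lambda \circ \mathrm{ab}^1$ is a composition of two surjections, hence surjective. There is no serious obstacle — the argument hinges entirely on the finiteness of $M$ (which removes the otherwise delicate restriction to torsion) and on the standard fact that passage to coinvariants by a larger group is a quotient operation.
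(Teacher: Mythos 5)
Your proof is correct and takes essentially the same route the paper intends: the paper records that $\mathrm{ab}^1$ is surjective, that $\mathrm{cor}^{-1}_\lambda$ is the natural map on coinvariants when $E_\lambda$ is non-Archimedean, and then says ``As $M$ is finite for semisimple $\bG$, we obtain the following''—precisely the observation that finiteness of $M$ makes the torsion subgroup the whole coinvariant group and the natural map on coinvariants a surjection. The only minor slip is your parenthetical description of $M$ as the ``kernel of the isogeny on cocharacter lattices''; it is the \emph{cokernel} of the injection $\rho_*\colon \X_*(\rho^{-1}(\overline{\bT}))\hookrightarrow \X_*(\overline{\bT})$, but that does not affect the argument.
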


\noindent We have the following result for torus $\bG=\bT$ by class field theory and \cite[Lemma 5.6.2]{Bo98}.

\begin{customprop}{L}\label{local-globaltorus}
Suppose $\bT$ is a direct product of a split torus $\bT^{\sp}$ and a torus $\bT'$ such that $\bT'$ is anisotropic over $E_\lambda$
for some place $\lambda$ of $E$. Then $\Sh^2(E,\bT)=\Sh^2(E,\bT^{\sp})\oplus\Sh^2(E,\bT')=0$.
\end{customprop}

\subsection{Proofs of main theorems}\label{PfMeta}

\subsubsection{The $1$-cocycles $\mu$ and $\chi$}\label{cocycle}
According to conditions (a),(b),(c) of the main theorem(s),
we have a chain $\bT^{\sp}\subset\bG^{\sp}\subset\GL_{n,E}$,
a chain $\bT\subset\GL_{n,E}$, and an $\overline E$-isomorphism of representations
$$f_{\overline E}:(\bT^{\sp}\times_E\overline E\hookrightarrow\GL_{n,\overline E})\stackrel{\cong}{\rightarrow}
(\bT\times_E\overline E\hookrightarrow \GL_{n,\overline E}).$$
This produces a $1$-cocycle (as well as a Galois representation since $\Gal(\overline E/E)$ acts trivially on $\Aut_{\overline E}\bT^{\sp}$):
\begin{equation}\label{defmu}
\mu=(\mu_\sigma):=(f_{\overline E}^{-1}\circ 
{}^\sigma\!f_{\overline E})\in Z^1(E,\Aut_{\overline E}\bT^{\sp})=\Hom(\Gal(\overline E/E),\Aut_{\overline E}\bT^{\sp}).
\end{equation}
As $\Omega_{\overline E}$ (resp. $\omega_{\overline E}$) is a subgroup of $\Aut_{\overline E}\bT^{\sp}$ ($\mathsection\ref{Diag}$),
we first show the following.

\begin{prop}\label{Chebo}
The image of the Galois representation $\mu:\Gal(\overline E/E)\to\Aut_{\overline E}\bT^{\sp}$ is contained in $\Omega_{\overline E}$ 
(resp. $\omega_{\overline E}$ if $\bG^{\sp}$ is irreducible on $E^n$).
Thus, it defines a class $\mu\in Z^1(E,\Omega_{\overline E})$ (resp. $Z^1(E,\omega_{\overline E})$).
\end{prop}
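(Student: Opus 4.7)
Since $\bT^{\sp}$ is $E$-split, the group $\Aut_{\overline E}\bT^{\sp}$ coincides with $\GL(\X(\bT^{\sp}))$ and carries the trivial $\Gal(\overline E/E)$-action, so $\mu$ is simply a continuous homomorphism factoring through some finite Galois quotient $\Gal(L/E)$. The subgroups $\Omega_{\overline E}$ and $\omega_{\overline E}$ are \emph{finite} (from the bottom rows of \eqref{ses2} and \eqref{ses4}) and insensitive to algebraically closed base change, so under the canonical identification $\Aut_{\overline E}\bT^{\sp} = \Aut_{\overline E_\lambda}\bT^{\sp}$ we have $\Omega_{\overline E} = \Omega_{\overline E_\lambda}$ and $\omega_{\overline E} = \omega_{\overline E_\lambda}$ for every $\lambda$. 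Since $\Omega_{\overline E}$ is a subgroup of the target of $\mu$, the plan is to verify $\mu_\sigma \in \Omega_{\overline E}$ for a generating set of elements $\sigma$, via a place-by-place local check.

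For the local step, fix $\lambda$ together with an embedding $\overline E \hookrightarrow \overline E_\lambda$ extending $i_\lambda$; this identifies $\Gal(\overline E_\lambda/E_\lambda)$ with a decomposition subgroup $D_\lambda \subset \Gal(\overline E/E)$. Condition (b) supplies an isomorphism of chain representations
$$f_{\overline E_\lambda} : (\bT^{\sp} \subset \bG^{\sp} \hookrightarrow \GL_{n,E}) \times_E \overline E_\lambda \stackrel{\cong}{\to} (\bT_\lambda \subset \bG_\lambda \hookrightarrow \GL_{n,E_\lambda}) \times_{E_\lambda} \overline E_\lambda,$$
and the rigidity condition (c) allows one to choose $f_{\overline E_\lambda}$ so that $f_{\overline E_\lambda}|_{\bT^{\sp}} = f_{\overline E} \otimes_{\overline E} \overline E_\lambda$. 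For any $\sigma \in D_\lambda$, the element
$$\widetilde{\mu}_\sigma := f_{\overline E_\lambda}^{-1} \circ {}^\sigma\! f_{\overline E_\lambda} \in \Aut_{\overline E_\lambda}(\bG^{\sp}, \bT^{\sp})$$
satisfies $\mathrm{Res}_{\bT^{\sp}}(\widetilde{\mu}_\sigma) = \mu_\sigma$ by the chosen compatibility on the torus, so $\mu_\sigma \in \Omega_{\overline E_\lambda} = \Omega_{\overline E}$ for every $\sigma \in D_\lambda$. When $\bG^{\sp}$ is irreducible on $E^n$, note further that $f_{\overline E_\lambda}$ is by definition induced by a linear isomorphism of $n$-dimensional vector spaces, i.e., conjugation by some $M_\lambda \in \GL_n(\overline E_\lambda)$; hence $\widetilde{\mu}_\sigma$ lies in $\Inn_{\overline E_\lambda}(\GL_{n,E}, \bG^{\sp}, \bT^{\sp})$ and its restriction to $\bT^{\sp}$ lies in $\omega_{\overline E}$.

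To globalize, I will apply Chebotarev density to the finite Galois extension $L/E$: every conjugacy class in $\Gal(L/E)$ contains the Frobenius at some unramified prime, so the union of the images of the decomposition subgroups $D_\lambda$ (ranging over $\lambda$ and embeddings) is a union of conjugacy classes meeting every conjugacy class of $\Gal(L/E)$, and therefore generates a normal subgroup equal to all of $\Gal(L/E)$. Since $\mu$ factors through $\Gal(L/E)$, its image is generated by the sets $\mu(D_\lambda) \subset \Omega_{\overline E}$, and because $\Omega_{\overline E}$ is a subgroup the entire image lies in $\Omega_{\overline E}$ (and in $\omega_{\overline E}$ in the irreducible case). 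The main conceptual input is the rigidity condition (c): the mere existence of extensions $f_{\overline E_\lambda}$ from (b) yields \emph{some} automorphism of $(\bG^{\sp},\bT^{\sp})$ at every place, but only rigidity ensures that these local automorphisms restrict to the \emph{same} global cocycle $\mu$ on $\bT^{\sp}$, which is what makes the local-to-global patching via Chebotarev valid.
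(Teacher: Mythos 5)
Your proof is correct and follows essentially the same route as the paper's: localize the cocycle $\mu$ at each finite place, use condition (b) (together with the rigidity condition (c) to guarantee the local isomorphisms restrict to $f_{\overline E}$ on the torus) to show that the local restrictions land in $\Omega_{\overline E}$ (resp.\ $\omega_{\overline E}$, via the inclusions in \eqref{inclu1} when $\bG^{\sp}$ is irreducible), observe that $\mu$ has finite image because $\Aut_{\overline E}\bT^{\sp}$ is discrete and the Galois action is trivial, and conclude by Chebotarev. Your writeup unpacks the Chebotarev step and the role of rigidity more explicitly than the paper, but the argument is identical in substance.
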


\begin{proof}
For every $i_{\overline\lambda}:\overline E\to \overline E_\lambda$ with $\lambda\in\mathcal{P}_{E,f}$,
$$\mathrm{loc}_{\overline\lambda}(\mu)=\mathrm{Res}_{\bT^{\sp}}\circ\mathrm{loc}_{\overline\lambda}((f_{\overline E_\lambda}^{-1}\circ 
{}^\sigma\!f_{\overline E_\lambda})) \in \Hom(\Gal(\overline E_\lambda/E_\lambda),\Omega_{\overline E_\lambda})=\Hom(\Gal(\overline E_\lambda/E_\lambda),\Omega_{\overline E})$$ 
(resp. $\Hom(\Gal(\overline E_\lambda/E_\lambda),\omega_{\overline E})$) by 
\eqref{locbarv}, condition (b),
and diagram \eqref{ses2} (resp. diagrams \eqref{inclu1} and \eqref{ses4}) for $F=E_\lambda$.
Hence, all the local representations land on $\Omega_{\overline E}$ (resp. $\omega_{\overline E}$).
Since $\Aut_{\overline E}\bT^{\sp}$ is discrete, the image of $\mu$ is finite.
We are done by the Chebotarev density theorem.
\end{proof}

\noindent So it makes sense to define by  diagram \eqref{ses2} (resp. \eqref{ses4}) for $F=E$ the twisted torus 
\begin{equation}\label{twisttorus}
{}_\mu\! (\bT^{\sp}/\bC)
\end{equation}
for main theorem \ref{meta2}(d) and the $\Theta_{\overline E}$-valued (resp. $\theta_{\overline E}$-valued) $1$-cocycle
\begin{equation}\label{chi}
\chi:=\pi(\mu).
\end{equation}

\subsubsection{Proof of main theorem \ref{meta1}(i)}
By condition (b) and diagram \eqref{ses6} for $F=E_\lambda$, in $H^1(E_\lambda,\Theta_{\overline E_\lambda})$
the cohomology class $\pi([\bT_\lambda\subset\bG_\lambda])$ is equal to $\mathrm{loc}_\lambda[\chi]$.
Then by applying $\mathrm{Res}_{\bG^{\sp}}$ in diagram \eqref{ses5}, the class $\pi([\bG_\lambda])=\mathrm{loc}_\lambda[\chi]$
for all $\lambda\in\mathcal{P}_{E,f}^{(p)}$. 
By Theorem \ref{quasisplit} for $F=E$, we obtain a quasi-split connected reductive group $\bG'$ over $E$
such that $[\bG']=j[\chi]$ in \eqref{ses12}.
On the one hand, for all $\lambda\in\mathcal{P}_{E,f}^{(p)}$, 
$[\bG'\times_E E_\lambda]$ and $[\bG_\lambda]$ belong to same fiber of $\pi$ in \eqref{ses12} for $F=E_\lambda$.
On the other hand,  for almost all $\lambda\in\mathcal{P}_{E,f}^{(p)}$
\begin{equation}
[\bG'\times_E E_\lambda]= j(\mathrm{loc}_\lambda[\chi])=[\bG_\lambda]
\end{equation}
by Theorem \ref{quasisplit} for $F=E_\lambda$ and condition (d).
Hence, by Corollary \ref{fiber} for $F=E_\lambda$ for all $\lambda\in\mathcal{P}_{E,f}^{(p)}$
to identify $[\bG_\lambda]$ as an element in $H^1(E_\lambda,\bG^{'\hspace{-0.2em}\ad}\times_E E_\lambda)$,
we obtain that $[\bG_\lambda]=0$ for almost all $\lambda\in\mathcal{P}_{E,f}^{(p)}$.
Let $\lambda'$ be a place of $E$ extending $p$. Then $\lambda'\notin\mathcal{P}_{E,f}^{(p)}$.
Since $\bG^{'\hspace{-0.2em}\ad}$ is semisimple, there exists 
an element $[\bG]\in H^1(E,\bG^{'\hspace{-0.2em}\ad})$ such that $\mathrm{loc}_\lambda[\bG]=[\bG_\lambda]$
for all $\lambda\in\mathcal{P}_{E,f}^{(p)}$ by Theorem \ref{local-globalred} and Proposition \ref{local-globalss}.
Here $\bG$ is an inner form of $\bG^{'\hspace{-0.2em}\ad}$ (Remark \ref{inner}(3)).
Therefore, we conclude that $\bG\times_E E_\lambda\cong \bG_\lambda$ for all $\lambda\in\mathcal{P}_{E,f}^{(p)}$
and $\bG_\lambda$ is unramified for all but finitely many $\lambda$.\qed

\begin{remark}
Besides $\mathrm{loc}_\lambda[\bG]=[\bG_\lambda]$
for all $\lambda\in\mathcal{P}_{E,f}^{(p)}$, we can impose conditions at other places of $E$ except $\lambda'$.
For example, we can require
that $\mathrm{loc}_\lambda[\bG]=[\bG'\times_E E_\lambda]$
for all $\lambda\in\mathcal{P}_E\backslash(\mathcal{P}_{E,f}^{(p)}\cup\{\lambda'\})$.
\end{remark}

\subsubsection{Proof of main theorem \ref{meta1}(ii)}
By condition (b) and diagram \eqref{ses8} for $F=E_\lambda$, 
the cohomology class $\pi([\bT_\lambda\subset\bG_\lambda\subset\GL_{n,E_\lambda}])$ 
is equal to $\mathrm{loc}_\lambda[\chi]$ in $H^1(E,\theta_{\overline E_\lambda})$.
Then by $\mathrm{Res}_{(\GL_{n,E_\lambda},\bG^{\sp})}$ in diagram \eqref{ses7}, 
the class $\pi([\bG_\lambda\subset\GL_{n,E_\lambda}])=\mathrm{loc}_\lambda[\chi]$
for all $\lambda\in\mathcal{P}_{E,f}^{(p)}$. 
By \eqref{chainform} for $F=E$, we obtain an $E$-form $\bG'\subset\GL_{n,E}$ of $\bG^{\sp}\subset \GL_{n,E}$
where $\bG'$ is quasi-split 
%and $\GL_{m',D'}$ is an inner form of $\GL_{n,E}$ and 
such that $[\bG'\subset\GL_{n,E}]=j[\chi]$ in \eqref{ident4}.
On the one hand, for all $\lambda\in\mathcal{P}_{E,f}^{(p)}$, 
$[(\bG'\subset\GL_{n,E})\times_E E_\lambda]$ and $[\bG_\lambda\subset\GL_{n,E_\lambda}]$ 
belong to same fiber of $\pi$ in \eqref{ident4} for $F=E_\lambda$.
On the other hand,  for almost all $\lambda\in\mathcal{P}_{E,f}^{(p)}$
\begin{equation}
[(\bG'\subset\GL_{n,E})\times_E E_\lambda]= j(\mathrm{loc}_\lambda[\chi])=[\bG_\lambda\subset\GL_{n,E_\lambda}]
\end{equation}
by Theorem \ref{quasisplit} for $F=E_\lambda$, condition (d), and the proposition below.

\begin{prop}\cite[Lemma 3.2, Thm. 3.3]{Ti71}
Let $F$ be a field of characteristic zero and $D_i$ ($i=1,2$) be central simple 
algebras over $F$. Let $\bH$ be a connected reductive group over $F$ and $\rho_i:\bH\to \GL_{m_i,D_i}$ ($i=1,2$)
be two $F$-representations that are absolutely irreducible.  
If $\rho_1\times_F\overline F\cong\rho_2\times\overline F$, then $\rho_1\cong \rho_2$.
\end{prop}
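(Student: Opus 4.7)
The plan is Galois descent via Schur's lemma. Absolute irreducibility makes the space of $\overline{F}$-intertwiners between $\rho_1\times_F\overline{F}$ and $\rho_2\times_F\overline{F}$ one-dimensional over $\overline{F}$; the resulting scalar ambiguity is precisely what is controlled by Hilbert~$90$ in the form $H^1(F,\PGL_n)\hookrightarrow H^2(F,\mathbb{G}_m)=\mathrm{Br}(F)$.

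First, I would fix $\overline{F}$-identifications $\iota_i:\GL_{m_i,D_i}\times_F\overline{F}\xrightarrow{\sim}\GL_{n,\overline{F}}$ (with common $n=m_i\deg(D_i)$, forced by the hypothesis) and set $\overline{\rho}_i:=\iota_i\circ(\rho_i\times_F\overline{F})$. The $F$-structures of the two targets are recorded by the cocycles $c_i=(c_{i,\sigma})\in Z^1(F,\PGL_n(\overline{F}))$ defined by $\sigma\iota_i=\mathrm{Inn}(c_{i,\sigma})\circ\iota_i$, with Brauer classes $[c_i]=[D_i]\in\mathrm{Br}(F)$. The hypothesis produces $g\in\GL_n(\overline{F})$ with $\overline{\rho}_2=\mathrm{Inn}(g)\circ\overline{\rho}_1$, unique up to an $\overline{F}^*$-scalar by Schur.

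Next, I would apply $\sigma\in\Gal(\overline{F}/F)$ to the identity $\overline{\rho}_2=\mathrm{Inn}(g)\circ\overline{\rho}_1$. Because each $\rho_i$ is defined over $F$, one obtains
$$\mathrm{Inn}(c_{2,\sigma})\circ\overline{\rho}_2=\mathrm{Inn}(\sigma(g))\circ\mathrm{Inn}(c_{1,\sigma})\circ\overline{\rho}_1,$$
and after substituting for $\overline{\rho}_2$ and using absolute irreducibility (via Schur applied to the image of $\overline{\rho}_1$) to quotient out by $Z(\GL_n)$, the relation $\sigma(g)=c_{2,\sigma}\cdot g\cdot c_{1,\sigma}^{-1}$ holds in $\PGL_n(\overline{F})$. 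This has two consequences: the element $[g]\in\PGL_n(\overline{F})$ cobounds $c_1$ and $c_2$, so $[c_1]=[c_2]$ and hence $[D_1]=[D_2]$, producing an $F$-group isomorphism $\GL_{m_1,D_1}\cong\GL_{m_2,D_2}$; and the same identity says $\mathrm{Inn}(g)$, being well-defined on $\PGL_n$, is Galois-equivariant between the two twisted $\overline{F}$-actions encoding the $F$-structures, hence descends to an $F$-group isomorphism which by construction intertwines $\rho_1$ and $\rho_2$.

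The delicate point is the second step: one must track how the twist cocycles $c_{i,\sigma}$, which encode the $F$-forms $\GL_{m_i,D_i}$, interact with the $\overline{F}$-intertwiner $g$, and correctly pass from an identity in $\GL_n(\overline{F})$ (where Schur leaves an $\overline{F}^*$-ambiguity) to one in $\PGL_n(\overline{F})$ (where this ambiguity vanishes). Once the $\PGL_n$-cocycle identity is in hand the conclusion is essentially formal, and no further Hilbert~$90$ step is needed because conjugation by $g$ already factors through $\PGL_n$.
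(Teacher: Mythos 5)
The paper does not give its own proof of this proposition: it is stated purely as a quoted result with the citation \cite[Lemma 3.2, Thm.~3.3]{Ti71}, so there is no in-text argument to compare against. Your Galois-descent argument is correct and is, in essence, the standard proof that one would extract from Tits's framework. The key points are all in place: Schur's lemma yields the $\overline F$-intertwiner $g$ unique up to scalar, $F$-rationality of each $\rho_i$ gives $\sigma\cdot\overline\rho_i=\mathrm{Inn}(c_{i,\sigma})\circ\overline\rho_i$, applying $\sigma$ to $\overline\rho_2=\mathrm{Inn}(g)\circ\overline\rho_1$ and cancelling the scalar ambiguity in $\PGL_n(\overline F)$ gives the coboundary relation $\sigma(g)=c_{2,\sigma}\,g\,c_{1,\sigma}^{-1}$, and this simultaneously shows $[c_1]=[c_2]$ in $H^1(F,\PGL_n)$ (hence $\GL_{m_1,D_1}\cong\GL_{m_2,D_2}$, and $D_1\cong D_2$ after comparing degrees) and that $\phi:=\iota_2^{-1}\circ\mathrm{Inn}(g)\circ\iota_1$ is Galois-invariant, hence an $F$-isomorphism intertwining $\rho_1$ and $\rho_2$. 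One small clarification: the opening mention of "Hilbert~90 in the form $H^1(F,\PGL_n)\hookrightarrow H^2(F,\mathbb{G}_m)$" is a slight misstatement---that injectivity comes from the long exact sequence of $1\to\mathbb{G}_m\to\GL_n\to\PGL_n\to1$ together with Hilbert~90 for $\mathbb{G}_m$; but as you yourself note at the end, neither Hilbert~90 nor the Brauer group is actually needed for the argument, since you work directly with the $\PGL_n$-cocycle identity. You also do not use the hypotheses that $\bH$ is connected and reductive---which is fine, since the argument holds more generally; Tits imposes them to fit the statement into the framework of his paper.
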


Hence, by Corollary \ref{fiber} for $F=E_\lambda$ for all $\lambda\in\mathcal{P}_{E,f}^{(p)}$
to identify $[\bG_\lambda\subset\GL_{n,E_\lambda}]$ as an element in $H^1(E_\lambda,\bG^{'\hspace{-0.2em}\ad}\times_E E_\lambda)$,
we obtain that $[\bG_\lambda\subset\GL_{n,E_\lambda}]=0$ for almost all $\lambda\in\mathcal{P}_{E,f}^{(p)}$.
Let $\lambda'$ be a place of $E$ extending $p$. Then $\lambda'\notin\mathcal{P}_{E,f}^{(p)}$.
Since $\bG^{'\hspace{-0.2em}\ad}$ is semisimple, there exists 
an element $[\bG\subset\GL_{m,D}]\in H^1(E,\bG^{'\hspace{-0.2em}\ad})$ 
such that 
\begin{equation}\label{require}
\begin{split}
\mathrm{loc}_\lambda[\bG\subset\GL_{m,D}]&=[\bG_\lambda\subset\GL_{n,E_\lambda}],\hspace{.2in}\forall\lambda\in\mathcal{P}_{E,f}^{(p)}\\
\mathrm{loc}_\lambda[\bG\subset\GL_{m,D}]&=[(\bG'\subset\GL_{n,E})\times E_\lambda],\hspace{.2in}
\forall\lambda\in\mathcal{P}_E\backslash(\mathcal{P}_{E,f}^{(p)}\cup\{\lambda'\})
\end{split}
\end{equation} 
by Theorem \ref{local-globalred} and Proposition \ref{local-globalss}.
Here $\bG$ (resp. $\GL_{m,D}$) is an inner form of $\bG^{'\hspace{-0.2em}\ad}$ (resp. $\GL_{n,E}$)
and  $\GL_{m,D}=\GL_{n,E}$ by \eqref{require} and class field theory.
By Lemma \ref{equivrepn}, we conclude that $(\bG\hookrightarrow\GL_{n,E})\times_E E_\lambda\cong (\bG_\lambda\hookrightarrow\GL_{n,E_\lambda})$ as representations for all $\lambda\in\mathcal{P}_{E,f}^{(p)}$
and $\bG_\lambda$ is unramified for all but finitely many $\lambda$.\qed

\subsubsection{Proof of main theorem \ref{meta2}}
Consider the cocycle $\mu\in Z^1(E,\Omega_{\overline E})$ (resp. $Z^1(E,\omega_{\overline E})$). 
By condition (b), 
$\mathrm{loc}_\lambda[\mu]=\mathrm{Res}_{\bT^{\sp}}[\bT_\lambda\subset\bG_\lambda]$ 
(resp. $\mathrm{Res}_{\bT^{\sp}}[\bT_\lambda\subset\bG_\lambda\subset\GL_{n,E_\lambda}]$) for all $\lambda\in\mathcal{P}_{E,f}$.
It suffices to show that $[\mu]$ belongs to the image of the injection $\mathrm{Res}_{\bT^{\sp}}$ (ensuring uniqueness)
in  diagram \eqref{ses6} (resp. \eqref{ses8}) for $F=E$.
By Corollary \ref{image}, this is equivalent to $\Delta(\mu)=0$ in $H^2(E, {}_\mu\! (\bT^{\sp}/\bC))$.
By condition (d) and Proposition \ref{local-globaltorus}, 
it remains to prove that $\mathrm{loc}_\lambda(\Delta(\mu))=0$ for all places $\lambda$ of $E$.
For a finite place $\lambda$, this is true by the fact that the image of $\mathrm{Res}_{\bT^{\sp}}$ in
\eqref{ses6} (resp. \eqref{ses8}) contains
$\mathrm{loc}_\lambda[\mu]$ and Corollary \ref{image} for $F=E_\lambda$.
For a real place, this is true by (d) and $H^2(\R,\bS_\R)=0$ if $\bS_\R$ is an $\R$-anisotropic torus (see \cite[Lemma 10.4]{Ko86}).
Therefore, we obtain a common $E$-form $\bT\subset\bG$ (resp. $\bT\subset\bG\hookrightarrow\GL_{m,D}$ by Lemma \ref{equivrepn})
of the chain $\bT_\lambda\subset\bG_\lambda$ (resp. the chain representation $\bT_\lambda\subset\bG_\lambda\hookrightarrow \GL_{n,E_\lambda}$) 
for all finite places $\lambda$ of $E$.\qed

\section{Rationality of algebraic monodromy groups}
This section is devoted to the proofs of the statements in $\mathsection1.2$. 
Fix a number field $E$ and denote by $p_\lambda$ the residue characteristic of the finite place $\lambda\in\mathcal{P}_{E,f}$.

\subsection{Profinite group $\Pi$ and Frobenius elements $\Fr$.} \label{twocases}
Consider two cases.

\subsubsection{(Characteristic zero).} 
 In this case, $\Pi$ denotes the absolute Galois group $\Gal(\overline K/K)$ of a number field $K$ and
$\mathcal{P}:=\mathcal{P}_{E,f}$. 
Equip $\Pi$ with a subset $\Fr\subset\Pi$ of \emph{Frobenius elements} as follows.

For all $v\in \mathcal{P}_{K,f}$, let $q_v$ be the size of the residue field $\F_{q_v}$ of $K_v$ and consider the natural surjection
$$\pi_v:\Gal(\overline K_v/K_v)\to \Gal(\overline \F_{q_v}/\F_{q_v}).$$
For each $v$, fix a lift $\phi_v\in \pi_v^{-1}(\Fr_{q_{v}}^{-1})$, where 
$\Fr_{q_{v}}^{-1}\in \Gal(\overline \F_{q_v}/\F_{q_v})$ is the geometric Frobenius.
Each $\overline v\in \mathcal{P}_{\overline K,f}$ determines an embedding 
$\iota_{\overline v}:\Gal(\overline K_v/K_v)\to \Gal(\overline K/K)$.
For $\overline v\in \mathcal{P}_{\overline K,f}$, define $\Fr_{\overline v}$ to be 
$\iota_{\overline v}(\phi_v)$ where $v$ is the restriction of $\overline v$ to $K$. Define
$$\Fr_v:=\bigcup_{\overline v|v} [\Fr_{\overline v}]\hspace{.2in}\mathrm{and} \hspace{.2in}
\Fr:=\bigcup_{v\in\mathcal{P}_{K,f}} \Fr_{v}.$$
For any Galois extension $L/K$ that is unramified except finitely many $v\in\mathcal{P}_{K,f}$ and any finite subset $S\subset \mathcal{P}_{K,f}$, 
the image of $\bigcup_{v\in\mathcal{P}_{K,f}\backslash S} \Fr_{v}$ in $\Gal(L/K)$ is dense \cite[Chap. I, $\mathsection2.2$ Cor. 2]{Se98}. 
Assign the number $q_v$ to the elements in $\Fr_{v}$.

\subsubsection{(Characteristic $p$).} 
 In this case, $\Pi$ denotes the \'etale fundamental group $\pi^{\et}_1(X,\bar x)$ (with some base point $\bar x$) 
of a smooth geometrically connected variety $X/\F_q$ in characteristic $p$ and
$\mathcal{P}:=\mathcal{P}_{E,f}^{(p)}$. 
Equip $\Pi$ with a subset $\Fr\subset\Pi$ of \emph{Frobenius elements} as follows.

Let $X^{cl}$ be the set of closed points of $X$.
For any geometric point $\overline x'$ over $x'\in X^{cl}$, let $\Fr_{\overline x'}$ be the image of the geometric Frobenius 
$\Fr_{q_{x'}}^{-1}\in\Gal(\overline \F_{q_{x'}}/\F_{q_{x'}})=\pi_1(x',\overline x')$ under the natural map
$$\pi_1(x',\overline x')\to \pi_1(X,\overline x')\stackrel{\sigma_{xx'}}{\longrightarrow}\pi_1(X,\overline x),$$
where $q_{x'}$ is the size of the residue field of $x'$.
Note that the change of base point isomorphism $\sigma_{xx'}$ is unique up to an inner automorphism of $\pi_1(X,\overline x)$.
Since the conjugacy class $[\Fr_{\overline x'}]$ depends only on $x'$, write $\Fr_{x'}:=[\Fr_{\overline x'}]$ and define
$$\Fr:=\bigcup_{x'\in X^{cl}} \Fr_{x'}.$$
The subset $\Fr$ is dense in $\Pi$ \cite{Se65}. Assign the number $q_{x'}$ to the elements in $\Fr_{x'}$.

\subsection{$E$-compatible systems} 
Let $(\Pi,\Fr,\mathcal P)$ be one of the two cases in $\mathsection\ref{twocases}$. 
In the characteristic zero case, denote by $S$ a finite subset of $\mathcal{P}_{K,f}$.
Otherwise, $S$ is the empty set.

\subsubsection{$\GL_n$-valued compatible systems}
A system of $n$-dimensional $\lambda$-adic (continuous) representations 
$$\rho_\bullet:=\{\rho_\lambda:\Pi\to \GL_n(E_\lambda)\}_{\lambda\in \mathcal{P}}$$ 
of $\Pi$ is said to be \emph{semisimple} (resp. \emph{irreducible, absolutely irreducible}) 
if for all $\lambda\in\mathcal{P}$, $\rho_\lambda$ is semisimple (resp. irreducible, absolutely irreducible).
The system $\rho_\bullet$ is said to be $E$-\emph{compatible} (with exceptional set $S$) 
if 
\begin{itemize}
\item in the characteristic zero case, $\rho_\lambda$ is unramified outside $S\cup\{t\in\mathcal{P}_{K,f}: p_\lambda|q_t\}$ for each $\lambda\in\mathcal{P}$;
\item for each Frobenius element $\Fr_{\overline t}\in\Fr$ satisfying $t\notin S$
and for each $\lambda$ satisfying $p_\lambda\nmid q_t$,
the characteristic polynomial 
\begin{equation}\label{poly}
P_t(T):=\det(\rho_\lambda(\Fr_{\overline t})-T\cdot I_n)\in E_\lambda[T]
\end{equation} 
has coefficients in $E$ and depends only on $t$ (independent of $\lambda\in\mathcal{P}$).
\end{itemize}
The compatible system $\rho_\bullet$ is said to be \emph{pure of weight} $w\in\R$ (resp. \emph{mixed of weights})
if for each $\Fr_{\overline t}\in\Fr$ with $t\notin S$ and each root $\alpha\in\overline E$ of $P_t(T)$,
the absolute value $|i(\alpha)|$ is equal to $q_t^{w/2}$ for all
complex embedding $i:\overline E\to\C$ (resp. is independent of the complex embedding $i:\overline E\to\C$).

\subsubsection{Coefficient extension and the Weil restriction} Let $\rho_\bullet$ be an $n$-dimensional (semisimple) $E$-compatible system of $\Pi$ that is pure of weight $w$ (resp. mixed of weights).
For a number field $E'$, denote by $\mathcal{P}'=\mathcal{P}_{E',f}$ in characteristic zero case
and by $\mathcal{P}'=\mathcal{P}_{E',f}^{(p)}$ in characteristic $p$ case.

If $E'$ is an extension of $E$, then we obtain by \emph{coefficient extension} 
a (semisimple) system $\rho_\bullet\otimes_E E'$ of $n$-dimensional $\lambda'$-adic representations:
\begin{equation}
(\rho_\bullet\otimes_E E')_{\lambda'}:=(\Pi\stackrel{\rho_\lambda}{\rightarrow}\GL_n(E_\lambda)\subset\GL_{n}(E'_{\lambda'})),
\end{equation}
where $\lambda$ is the restriction of $\lambda'$ to $E$. The system is $E'$-compatible (with exceptional set $S$), 
pure of weight $w$ (resp. mixed of weights),
and called the \emph{coefficient extension of $\rho_\bullet$ to $E'$} (see \cite[Definition 3.2]{BGP19}).

If $E'$ is a subfield of $E$,
then we obtain by the \emph{Weil restriction of scalars} a (semisimple) system $\mathrm{Res}_{E/E'}\rho_\bullet$ 
of $n[E:E']$-dimensional $\lambda'$-adic representations:
\begin{equation}
(\mathrm{Res}_{E/E'}\rho_\bullet)_{\lambda'}:=\bigoplus_{\lambda|\lambda'}\rho_\lambda:\Pi\to 
\prod_{\lambda|\lambda'}\GL_n(E_\lambda)=(\mathrm{Res}_{E/E'}\GL_{n,E})(E'_{\lambda'})\subset\GL_{n[E:E']}(E'_{\lambda'}).
\end{equation}
\noindent 
The system is $E'$-compatible (with exceptional set $S$), pure of weight $w$ (resp. mixed of weights),
and called the \emph{Weil restriction of $\rho_\bullet$} (see \cite[Definition 3.4]{BGP19}).

\subsubsection{$\bG$-valued compatible systems} Let $\bG$ be a linear algebraic group defined over $E$ with affine coordinate ring $R$.
Since $\bG$ acts on itself by conjugation, $\bG$ acts on $R$. The subring of invariant functions is denoted by $R^\bG$. 
For all $g\in \bG$, let $g_s$ be the semisimple part of $g$. If $g$ is defined over a field extension $F/E$, 
then $g_s$ is also defined over $F$.
A system of  $\lambda$-adic $\bG$-representations 
$\{\rho_\lambda:\Pi\to \bG(E_\lambda)\}_{\lambda\in \mathcal{P}}$ of $\Pi$
is said to be \emph{$E$-compatible} (with exceptional set $S$) 
if 
\begin{itemize}
\item in the characteristic zero case, $\rho_\lambda$ is unramified outside $S\cup\{t\in\mathcal{P}_{K,f}: p_\lambda|q_t\}$ for each $\lambda\in\mathcal{P}$;
\item for each Frobenius element $\Fr_{\overline t}\in\Fr$ satisfying $t\notin S$, each $\lambda$ satisfying $p_\lambda\nmid q_t$,
and each $f\in R^\bG$
the number  
\begin{equation}
f(\rho_\lambda(\Fr_{\overline t})_s)\in E_\lambda
\end{equation} 
belongs to $E$ and depends only on $t$ and $f$ \cite[Chap. I, $\mathsection2.4$]{Se98} 
(independent of $\lambda\in\mathcal P$)\footnote{This is equivalent to the conjugacy class of $\rho_\lambda(\Fr_{\overline t})_s$ in $\bG$ being defined over $E$ and depends only on $t\notin S$ (independent of $\lambda$).}.
\end{itemize}
It follows that an $n$-dimensional $E$-compatible system is the same as an $E$-compatible system of $\GL_{n,E}$-representations.

\subsubsection{Algebraic monodromy groups and connectedness}
For all $\lambda\in\mathcal{P}$, the \emph{algebraic monodromy group} of $\rho_\lambda$,
i.e., the Zariski closure of the image of $\rho_\lambda$ in $\GL_{n,E_\lambda}$, is denoted by $\bG_\lambda$.
It is an closed subgroup of $\GL_{n,E_\lambda}$.
The image $\rho_\lambda(\Pi)$ is a compact subgroup of the $\lambda$-adic Lie group $\bG_\lambda(E_\lambda)$.
The following result is well-known by using the compatibility condition, see \cite[Prop. 6.14]{LP92}.

\begin{customprop}{M}\label{connect}
The component groups $\bG_\lambda/\bG_\lambda^\circ$ are isomorphic for all $\lambda\in\mathcal{P}$. 
In particular, the connectedness of $\bG_\lambda$ is independent of $\lambda$.
\end{customprop}

\subsubsection{Group schemes}
Suppose the algebraic monodromy group $\bG_\lambda$ is connected reductive for all $\lambda$. 
Let $\mathcal{O}_\lambda$ be the ring of 
integers of $E_\lambda$ with residue field $k_\lambda$ of characteristic $p_\lambda$.
Let $\Lambda_\lambda$ be an $\mathcal{O}_\lambda$-lattice of $E_\lambda^n$ that is invariant under 
the image $\rho_\lambda(\Pi)$. Let $\mathcal{G}_\lambda$ be the Zariski closure 
of $\rho_\lambda(\Pi)$ in $\GL_{\Lambda_\lambda}\cong\GL_{n,\mathcal{O}_\lambda}$, endowed with 
the unique structure of reduce closed subscheme. The generic fiber of $\mathcal{G}_\lambda$
is $\bG_\lambda$. The special fiber, denoted by $\mathcal{G}_{k_\lambda}$, is identified as a 
subgroup of $\GL_{n,k_\lambda}$.
When $p_\lambda\geq n$, the subgroup $\mathcal{G}_{k_\lambda}\subset\GL_{n,k_\lambda}$ is said to be \emph{saturated}
if for any unipotent element $u\in \mathcal{G}_{k_\lambda}(\overline k_\lambda)$,
the one parameter subgroup $\{u^a: a\in\overline k_\lambda\}\subset\GL_n(\overline k_\lambda)$
belongs to  $ \mathcal{G}_{k_\lambda}(\overline k_\lambda)$ \cite[$\mathsection4.2$]{Se94}.

\begin{customprop}{N}\label{gpscheme}\cite[Prop. 1.3]{LP95},\cite[Prop. 5.51, Thm. 5.52]{BGP19}
For all but finitely many $\lambda\in\mathcal{P}$, the following assertions hold.
\begin{enumerate}[(i)]
\item The group scheme $\mathcal{G}_\lambda$ is smooth with constant absolute rank over $\mathcal{O}_\lambda$.
\item The identity component of the special fiber $\mathcal{G}_{k_\lambda}\subset\GL_{n,k_\lambda}$ is saturated.
\end{enumerate}
\end{customprop} 

\subsection{Frobenius torus}

\subsubsection{Frobenius torus and maximal torus}
For all $\lambda\in\mathcal{P}$, let $\bG_\lambda$ be the algebraic monodromy group of $\rho_\lambda$.
The identity component of $\bG_\lambda$ is reductive since $\rho_\lambda$ is semisimple.
Let $\rho_\lambda$ be a member of the system and $\Fr_{\overline t}\in\Fr$ be a 
Frobenius element with $t\notin S$.
If $p_\lambda\nmid q_t$, then 
the \emph{Frobenius torus} $\bT_{\overline t,\lambda}$ of  $\Fr_{\overline t}$ 
is defined to be the identity component of the smallest (diagonalizable) 
algebraic subgroup $\bS_{\overline t,\lambda}$ in $\GL_{n,E_\lambda}$ 
containing the semisimple part of $\rho_\lambda(\Fr_{\overline t})$.
It follows that $\bT_{\overline t,\lambda}\subset \bS_{\overline t,\lambda}\subset \bG_\lambda$.
The following theorem is due to Serre.

\begin{customthm}{O}\label{FMT}(see \cite[Thm. 1.2 and its proof]{LP97}, \cite[Thm. 5.7]{Ch04}, \cite[Thm. 2.6]{Hu18})
Suppose the algebraic monodromy group $\bG_{\lambda'}$ is connected for some $\lambda'\in\mathcal{P}$. 
Suppose there exists a finite subset $Q\subset\Q$ such that for all $\Fr_{\overline t}\in \Fr$ with $t\notin S$,
the following conditions are satisfied for every root $\alpha$ of the characteristic polynomial $P_t(T)$ in \eqref{poly}:
\begin{enumerate}[(a)]
\item the absolute values of $\alpha$ in all complex embeddings are equal;
\item $\alpha$ is a unit at any finite place not extending $p_t$;
\item for any finite place $w$ of $\overline \Q$ such that $w(p_t)>0$, the ratio $w(\alpha)/w(q_t)$
belongs to $Q$.
\end{enumerate}
Then there exists a proper closed subvariety $\bY_{\lambda'}$ of $\bG_{\lambda'}$ such that $\bT_{\overline t,\lambda'}$
is a maximal torus of $\bG_{\lambda'}$ whenever $\rho_{\lambda'}(\Fr_{\overline t})\in\bG_{\lambda'}\backslash \bY_{\lambda'}$.
\end{customthm}

\begin{remark}\label{Dirichlet}
~
\begin{enumerate}[(1)]
\item If $\bG_\lambda$ is connected and the Frobenius torus $\bT_{\overline t,\lambda}$ is maximal, 
then $\bT_{\overline t,\lambda}=\bS_{\overline t,\lambda}$.
\item Conditions \ref{FMT}(a),(b) hold for our mixed compatible system $\rho_\bullet$.
\item Condition \ref{FMT}(c) holds in the characteristic $p$ case by replacing $X$ 
with a non-empty open subset $U$ \cite[Thm. 1.3.3(i), Remark 1.3.5]{DK17}.
\item Condition \ref{FMT}(c) holds in the characteristic zero case if we assume the system is 
$\{H^w(Y_{\overline K},\Q_\ell)\}_{\ell\in\mathcal{P}_{\Q,f}}$ for some smooth projective variety $Y/K$ \cite[Thm. 1.1]{LP97}.
\item In the characteristic $p$ case, the subset of elements $\Fr_{\overline t}$ 
of $\Fr$ whose Frobenius tori $\bT_{\overline t,\lambda'}$ are maximal in $\bG_{\lambda'}$
is dense in $\Pi$. 
\item In the characteristic zero case, the subset of places $v\in\mathcal{P}_{K,f}$ such that $\bT_{\overline v,\lambda'}$
is a maximal torus of $\bG_{\lambda'}$ is of Dirichlet density one (see \cite[Cor. 2.7]{Hu18}).
\end{enumerate}
\end{remark}

Let $\Fr_{\overline t}$ be a Frobenius element. 
There is a semisimple matrix $M_t$ of $\GL_n(E)$ with $P_t(T)$ \eqref{poly} as characteristic polynomial.
For all $\lambda\in\mathcal{P}$ with $p_\lambda\nmid q_t$, $M_t$ is conjugate to the semisimple part $\rho_\lambda(\Fr_{\overline t})_s$
in $\GL_n(E_\lambda)$ by $E$-compatibility. Hence, if we let $\bS_t$ be the smallest algebraic subgroup of $\GL_{n,E}$ containing $M_t$ 
and $\bT_t$ be the identity component of $\bS_t$, then the chain representations 
$(\bT_t\subset\bS_t\hookrightarrow\GL_{n,E})\times_E E_\lambda$
and $\bT_{\overline t,\lambda}\subset\bS_{\overline t,\lambda}\hookrightarrow\GL_{n,E_\lambda}$
are isomorphic for all $\lambda\in\mathcal{P}$ with $p_\lambda\nmid q_t$.

\begin{cor}\label{FMTcor}
Assume the conditions of Theorem \ref{FMT}. Then the following assertions hold.
\begin{enumerate}[(i)]
\item (Common $E$-form of formal characters) If the Frobenius torus $\bT_{\overline t,\lambda'}$ is a maximal torus of $\bG_{\lambda'}$,
then the Frobenius torus $\bT_{\overline t,\lambda}$ is also a maximal torus of $\bG_{\lambda}$
for all $\lambda\in\mathcal{P}$ with $p_{\lambda}\nmid q_t$.
Moreover, the representation 
$(\bT_{t}\hookrightarrow \GL_{n,E})\times_E E_{\lambda}$ is isomorphic to 
$\bT_{\overline t,\lambda}\hookrightarrow\GL_{n,E_{\lambda}}$ for all $\lambda\in\mathcal{P}$ with $p_{\lambda}\nmid q_t$.
\item (Absolute rank) The absolute rank of $\bG_\lambda$ is independent of $\lambda$.
\end{enumerate}
\end{cor}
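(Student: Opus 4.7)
The plan is to deduce both parts simultaneously from the preceding discussion identifying $\bT_t \subset \GL_{n,E}$ with the Frobenius tori $\bT_{\overline t,\lambda} \subset \GL_{n,E_\lambda}$ after base change, combined with Theorem \ref{FMT} applied at every place.

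\textbf{Step 1 (rationality of the Frobenius torus).} The key input is already available in the paragraph preceding the corollary: by $E$-compatibility, the semisimple matrix $M_t \in \GL_n(E)$ with characteristic polynomial $P_t(T)$ is conjugate in $\GL_n(E_\lambda)$ to the semisimple part $\rho_\lambda(\Fr_{\overline t})_s$ for every $\lambda$ with $p_\lambda \neq p_t$, since two semisimple matrices are conjugate if and only if they share a characteristic polynomial. Hence $\bT_t \times_E E_\lambda$ and $\bT_{\overline t,\lambda}$ are conjugate as subgroups of $\GL_{n,E_\lambda}$; in particular the chain representations $(\bT_t \hookrightarrow \GL_{n,E}) \times_E E_\lambda$ and $\bT_{\overline t,\lambda} \hookrightarrow \GL_{n,E_\lambda}$ are isomorphic, which is the second assertion of (i). Consequently the dimension $d := \dim_E \bT_t = \dim_{E_\lambda} \bT_{\overline t,\lambda}$ is independent of $\lambda \in \mathcal{P}$ with $p_\lambda \neq p_t$.

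\textbf{Step 2 (a two-sided rank inequality).} Since $\bT_{\overline t,\lambda}$ is a subtorus of $\bG_\lambda$, we have $d \leq \rank \bG_\lambda$ for every such $\lambda$; by the hypothesis at $\lambda'$, we in fact have $d = \rank \bG_{\lambda'}$, giving
\begin{equation*}
\rank \bG_{\lambda'} \leq \rank \bG_\lambda \quad \text{for every } \lambda \in \mathcal{P} \text{ with } p_\lambda \neq p_t.
\end{equation*}
For the reverse inequality, fix any such $\lambda$. By Proposition \ref{connect}, connectedness of $\bG_{\lambda'}$ propagates to $\bG_\lambda$, so the hypotheses of Theorem \ref{FMT} hold at $\lambda$ as well. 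By Remark \ref{Dirichlet}(5)--(6), the set of Frobenius classes whose Frobenius torus at $\lambda$ is maximal in $\bG_\lambda$ is dense in $\Pi$ (resp. of positive Dirichlet density), so we can pick some $\Fr_{\overline s} \in \Fr$ with $p_s \notin \{p_\lambda, p_{\lambda'}\}$ and $\bT_{\overline s,\lambda}$ maximal in $\bG_\lambda$. Applying Step 1 to $\overline s$ gives $\dim \bT_s = \dim \bT_{\overline s,\lambda} = \rank \bG_\lambda$, while the same dimension counting at $\lambda'$ yields $\rank \bG_\lambda = \dim \bT_s \leq \rank \bG_{\lambda'}$. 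Combining the two inequalities proves part (ii).

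\textbf{Step 3 (maximality at every $\lambda$).} With the common rank established, we return to the given $\overline t$: the torus $\bT_{\overline t,\lambda}$ has dimension $d = \rank \bG_{\lambda'} = \rank \bG_\lambda$ and is contained in $\bG_\lambda$, hence is a maximal torus, completing (i).

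\textbf{Main obstacle.} The only subtle point is guaranteeing that Theorem \ref{FMT} is available at each $\lambda$ simultaneously. This splits into two small checks: connectedness of $\bG_\lambda$ (handled uniformly by Proposition \ref{connect}) and the hypotheses (a)--(c) of Theorem \ref{FMT} (handled by Remark \ref{Dirichlet}(2)--(4), which covers both the curve-over-$\F_q$ setting and the \'etale-cohomology setting in characteristic zero). Once these are in place, the two-sided dimension bound obtained by producing a maximal Frobenius torus at both $\lambda$ and $\lambda'$ does all the work, and no further representation-theoretic input is needed.
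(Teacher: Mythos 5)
Your proof is correct, and the ingredients are the same as the paper's, but you reverse the logical dependency between (i) and (ii). The paper asserts that (i) is ``straightforward by Theorem~\ref{FMT} and the construction of $\bT_t$'' and then derives (ii) from (i) plus the density statements of Remark~\ref{Dirichlet}; by contrast, you establish (ii) first by a two-sided rank comparison (using the given Frobenius element at $\lambda'$ for one inequality and a Frobenius element maximal at $\lambda$, produced by applying Theorem~\ref{FMT} and the density argument at $\lambda$, for the other) and then read off (i) by dimension counting. This reversal is a genuine improvement in transparency: proving (i) requires knowing $\rank\bG_\lambda=\rank\bG_{\lambda'}$, which \emph{is} (ii), so the paper's one-line claim that (i) is straightforward secretly contains the same two-sided comparison, and your ordering exposes it. You also correctly note the two extra checks this reordering demands — connectedness of $\bG_\lambda$ for all $\lambda$ via Proposition~\ref{connect}, and that the conditions of Theorem~\ref{FMT} (which are statements about the $\lambda$-independent polynomials $P_t(T)$) hold at every place, not just $\lambda'$ — so no gap remains.
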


\begin{proof}
Assertion (i) is straight forward by Theorem \ref{FMT} and the above construction of $\bT_t$.
Assertion (ii) is obvious by (i) in the characteristic $p$ case and follows from (i) and Remark \ref{Dirichlet}(6) in the characteristic zero case.
\end{proof}

\subsubsection{Anisotropic subtorus}
In this subsection, $\bG_\lambda$ is connected for all $\lambda\in\mathcal{P}$.
The subtorus $\bT_t\subset\GL_{n,E}$ in Corollary \ref{FMTcor}(i) is studied
under the following hypothesis. 
Let $k$ be the order of $\bS_t/\bT_t$. Then the Zariski closure of $M_t^{k\Z}$ in $\GL_{n,E}$ is $\bT_t$.\\

\noindent\textbf{Hypothesis R}: Assume for each real embedding $E\to \R$, the set of powers $\det(M_t)^\Z\subset\R$
contains some non-zero integral power of the absolute value $|i(\alpha)|$ for every root $\alpha$ of $P_t(T)$
and every complex embedding $i:\overline E\to \C$ extending $E\to \R$.

\begin{prop}\label{anisoreal}
If Hypothesis R holds, then the subtorus $(\bT_t\cap \SL_{n,E})^\circ$ of $\bT_t$ is anisotropic at all real places of $E$.
\end{prop}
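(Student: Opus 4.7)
The plan is to translate the anisotropy statement into one about Galois invariants in a rational character space and then use Hypothesis R to pin those invariants down. First, fix a real embedding $v\colon E\hookrightarrow\R$ with an extension $\overline v\colon \overline E\hookrightarrow\C$, and diagonalize $M_t$ over $\overline E$ with eigenvalues $\alpha_1,\dots,\alpha_n$. In the resulting eigenbasis $\bT_t\times_E\overline E$ is a subtorus of the diagonal $\mathbb{G}_m^n$, and since $\bT_t$ is the Zariski closure of $M_t^{k\Z}$ a short computation identifies the rational character space as
\[
X^*(\bT_t)\otimes\Q = \Q^n/L_\Q, \qquad L_\Q=\{(a_i)\in\Q^n:\prod_i\alpha_i^{a_i}\in\mu_\infty\}.
\]
Because $M_t$ is $E$-rational, $\Gal(\overline E/E)$ acts on this space through the permutation action on the $\alpha_i$; in particular the complex conjugation $c$ attached to $\overline v$ pairs complex-conjugate eigenvalues. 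The character $\mathbf 1:=(1,\dots,1)$ corresponds to $\det$, so
\[
X^*(T)\otimes\Q = (\Q^n/L_\Q)/\Q\mathbf 1 \qquad\text{for}\qquad T:=(\bT_t\cap\SL_{n,E})^\circ.
\]

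A real torus is anisotropic exactly when the $c$-invariant part of its rational character space vanishes, and because taking $c$-invariants is exact on $\Q[\langle c\rangle]$-modules, it suffices to establish the identity $(\Q^n/L_\Q)^c=\Q\mathbf 1$. The inclusion $\Q\mathbf 1\subseteq (\Q^n/L_\Q)^c$ is automatic. For the reverse inclusion, given $\bar a=(a_i)\in\Q^n$ with $c(\bar a)=\bar a$ in $\Q^n/L_\Q$, unpacking the permutation action shows that $\beta:=\prod_i\alpha_i^{a_i}\in\overline E^\times$ satisfies $c(\beta)/\beta=\overline\beta/\beta\in\mu_\infty$. Writing $\beta=|\beta|e^{i\theta}$ under the embedding, this forces $\theta\in\pi\Q$, so $\beta/|\beta|$ is a root of unity.

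The decisive input is Hypothesis R: for each $i$ it gives $N_i\neq 0$ and $M_i$ in $\Z$ with $|\alpha_i|^{N_i}=\det(M_t)^{M_i}$, so taking absolute values expresses $|\alpha_i|$ as a rational power of $|\det(M_t)|$. Consequently $|\beta|=|\det(M_t)|^{M/N}$ for some $M,N\in\Z$ with $N>0$, and $\beta^N/\det(M_t)^M$ has absolute value $1$; since its argument lies in $\pi\Q$ (using that $\beta/|\beta|$ is a root of unity and that $\det(M_t)^M\in\R^\times$), this ratio is itself a root of unity. This gives $\beta^N\equiv\det(M_t)^M\pmod{\mu_\infty}$, i.e.\ $N\bar a=M\mathbf 1$ in $\Q^n/L_\Q$, completing the argument. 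The degenerate case $|\det(M_t)|=1$ is handled separately: there Hypothesis R forces every $|\alpha_i|=1$, hence $|\beta|=1$, and $\overline\beta/\beta\in\mu_\infty$ then forces $\beta\in\mu_\infty$ directly so $\bar a=0$. The main obstacle I foresee is the bookkeeping for the Galois action on the character space and the clean passage from the cocycle condition $c(\bar a)=\bar a$ to the single analytic condition $\overline\beta/\beta\in\mu_\infty$; once that is established, Hypothesis R does all the remaining work.
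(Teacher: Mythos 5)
Your proof is correct and rests on the same mechanism as the paper's: a conjugation-fixed character direction, once evaluated (you via $\beta=\prod\alpha_i^{a_i}$, the paper via $\chi(M_t^k)$ and Zariski density of $M_t^{k\Z}$), is constrained by Hypothesis R to be a rational power of $\det(M_t)$, hence trivial on $(\bT_t\cap\SL_{n,E})^\circ$. The only difference is packaging — you make the character lattice $\Q^n/L_\Q$ and the $\Gal(\C/\R)$-action on it fully explicit and reduce to $(\Q^n/L_\Q)^c=\Q\mathbf 1$, whereas the paper works directly with $\R$-characters of $\bT_{t,\R}$ and avoids unwinding the lattice, but the heart of the argument is the same.
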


\begin{proof}
Embed $E$ into $\R$ and let $\bT_{t,\R}\subset\bS_{t,\R}\subset\GL_{n,\R}$ be the base change to $\R$. 
If $\chi:\bT_{t,\R}\to\mathbb{G}_{m,\R}$ is a $\R$-character, then $\chi(M_t^k)\in \mathbb{G}_{m}(\R)=\R^*$
Let $i:\overline E\to \C$ be an embedding extending $E\to \R$. Then $\chi(M_t^k)$
is the product of some integral powers of the roots $i(\alpha)$ of the polynomial $i(P_t(T))\in\R[T]$.
Hence, there exist integers $h\neq 0$ and $m$ such that
$$\chi^{2h}(M_t^k)= \det(M_t)^{2m}\in \R^*_{>0}$$
 by Hypothesis R. This implies $\chi^{2hk}=\det^{2m}$ on $\bT_{t,\R}$ since $(M_t^k)^\Z$ is Zariski dense in $\bT_{t,\R}$.
Hence, $\chi^{2hk}$ is trivial on the subtorus $(\bT_{t,\R}\cap\SL_{n,\R})^\circ$ for some $2hk\neq 0$.
We conclude that the torus $(\bT_{t,\R}\cap\SL_{n,\R})^\circ$ is anisotropic.
\end{proof}

\begin{cor}\label{anisopadic}
If Hypothesis R holds and $E$ has a real place, then the subtorus $(\bT_t\cap \SL_{n,E})^\circ$ of $\bT_t$ 
is anisotropic at a positive Dirichlet density subset $\mathcal{P}'$ of $\mathcal{P}_{E,f}$.
\end{cor}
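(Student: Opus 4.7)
The plan is to combine Proposition \ref{anisoreal} with the Chebotarev density theorem, translating anisotropy of a torus into a condition on the absence of Frobenius-fixed characters. Write $\bS := (\bT_t \cap \SL_{n,E})^\circ$. If $\bS$ is trivial the statement is vacuous, so assume $\bS \neq 1$. Recall that for an $E$-torus $\bS$ and an unramified finite place $\lambda$, $\bS$ is anisotropic over $E_\lambda$ if and only if the Frobenius $\Frob_\lambda$, acting on the geometric character lattice $X^*(\bS) := \Hom(\bS_{\overline E}, \mathbb{G}_{m,\overline E})$, has no nonzero fixed vector.

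First I would choose the splitting field $L$ of $\bS$, a finite Galois extension of $E$ through which the $\Gal(\overline E/E)$-action on $X^*(\bS)$ factors. Fix a real place $v_\infty$ of $E$ (which exists by hypothesis), and let $c \in \Gal(\overline E/E)$ be a corresponding complex conjugation. Its image $\sigma \in \Gal(L/E)$ has order at most $2$. Proposition \ref{anisoreal} asserts that $\bS$ is anisotropic over $E_{v_\infty} = \R$, which, via the same character-lattice dictionary, translates into $X^*(\bS)^\sigma = 0$. In particular $\sigma \neq \mathrm{id}$ in $\Gal(L/E)$, so its conjugacy class $C_\sigma$ is nontrivial.

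Next I would apply the Chebotarev density theorem to the conjugacy class $C_\sigma$. This yields a subset $\mathcal{P}' \subset \mathcal{P}_{E,f}$ of Dirichlet density $|C_\sigma|/[L:E] > 0$ consisting of finite places unramified in $L$ whose Frobenius $\Frob_\lambda$ lies in $C_\sigma$. Since conjugation within $\Gal(L/E)$ is an automorphism of its linear action on $X^*(\bS)$, elements of a single conjugacy class have fixed subspaces of equal dimension; in particular $X^*(\bS)^{\Frob_\lambda} = 0$ for every $\lambda \in \mathcal{P}'$. Therefore $\bS$ is anisotropic over $E_\lambda$ for all $\lambda \in \mathcal{P}'$, as required.

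The only real obstacle is bookkeeping. One must confirm the equivalence between $E_\lambda$-anisotropy of an unramified torus and the vanishing of Frobenius-fixed characters, and verify that complex conjugation attached to the real place $v_\infty$ gives rise to a well-defined, nontrivial element of $\Gal(L/E)$ whose Chebotarev class therefore has positive density. Both points follow immediately from standard torus-cohomology formalism once Proposition \ref{anisoreal} is invoked, so no substantive additional input is needed.
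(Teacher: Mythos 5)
Your proposal is correct and takes essentially the same approach as the paper: invoke Proposition \ref{anisoreal} to get anisotropy at a real place, realize the $\Gal(\overline E/E)$-action on $X^*((\bT_t\cap\SL_{n,E})^\circ)$ through a finite quotient, and apply Chebotarev to produce a positive-density set of finite places whose Frobenius is conjugate to the complex conjugation and hence also fixes no nonzero character. The only cosmetic difference is that the paper computes $\phi(c)=-I_r$ explicitly (using that $c$ has order two and no fixed vector) and looks for $\phi(\Fr_\lambda)=-I_r$, while you argue via invariance of fixed-subspace dimension along a conjugacy class; since $-I_r$ is central these amount to the same thing here.
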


\begin{proof}
Let $r$ be the absolute rank of the $E$-torus $(\bT_{t}\cap\SL_{n,E})^\circ$.
Then it is an $E$-form of the split torus $\mathbb{G}_{m,E}^r$ with automorphism group $\GL_r(\Z)$. 
The isomorphism class of $(\bT_{t}\cap\SL_{n,E})^\circ$ is represented by an element of $H^1(E, \GL_r(\Z))$,
which is a continuous group homomorphism $\phi:\Gal(\overline E/E)\to \GL_r(\Z)$ up to conjugation.
Let $c\in\Gal(\overline E/E)$ be a complex conjugation corresponding to a real place of $E$. 
Since $(\bT_{t}\cap\SL_{n,E})^\circ$ is anisotropic over $\R$ by Proposition \ref{anisoreal} and $c$ is of order two, 
it follows that $\phi(c)=-I_r$.
Since the image of $\phi$ is finite, there is a positive Dirichlet density set $\mathcal{P}'$ of 
finite places $\lambda$ of $E$ such that $\phi(\Fr_\lambda)=-I_r$ 
by the Chebotarev density theorem.
Therefore, $(\bT_{t}\cap\SL_{n,E})^\circ$ is anisotropic over $E_\lambda$ for all $\lambda\in \mathcal{P}'$.
\end{proof}

\begin{remark}\label{pureR}
~
\begin{enumerate}[(1)]
\item Hypothesis R holds for every $P_t(T)$ if the $E$-compatible system is pure.
\item If $\lambda\in\mathcal{P}'$ in Corollary \ref{anisopadic}, then the $E_\lambda$-subtorus 
$(\bT_{\overline t,\lambda}\cap\bG_\lambda^{\der})^\circ$ of 
$(\bT_{\overline t,\lambda}\cap\SL_{n,E_\lambda})^\circ\cong(\bT_{t}\cap\SL_{n,E})^\circ\times_E E_\lambda$ 
is also anisotropic. If $\bT_{\overline t,\lambda}\subset \bG_\lambda$ is a maximal torus, then 
$\bT_{\overline t,\lambda}\cap\bG_\lambda^{\der}\subset \bG_\lambda^{\der}$ is also a maximal torus.
\item Corollary \ref{anisopadic} is not true for general $E$ since $(\bT_t\cap \SL_{n,E})^\circ$
can be a non-trivial split torus over $E$. This is done by taking a finite extension $E'/E$ such that 
$P_t[T]$ splits and replacing the $E$-compatible system $\rho_\bullet$ 
with its coefficient extension $\rho_\bullet\otimes_E E'$ ($\mathsection3.2.2$). 
\end{enumerate}
\end{remark}

Let $\bG$ be a connected reductive group defined over a field $F$. A torus
$\bT\subset\bG$ is said to be \emph{fundamental} if it is a maximal torus with minimal $F$-rank.  
In the characteristic zero case, let $\mathcal{S}$ be the subset of elements $v\in\mathcal{P}_{K,f}$ 
such that for some $\lambda\in\mathcal{P}_{E,f}$, the Frobenius torus $\bT_{\overline v,\lambda}\cong\bT_v\times_E E_\lambda$
is a fundamental torus of $\bG_\lambda$. 
A Frobenius torus $\bT_{\overline v,\lambda}\subset\bG_\lambda$ 
being fundamental is equivalent to $\bT_{\overline v,\lambda}$ is a maximal torus and 
$\bT_{\overline v,\lambda}\cap \bG_\lambda^{\der}$ is anisotropic \cite[Prop. 5.3.2]{Bo98}.
When Hypothesis R holds and $E$ has a real place, Remark \ref{Dirichlet}(6), Corollary \ref{anisopadic}, 
and Remark \ref{pureR}(2) imply that $\mathcal{S}$ is of Dirichlet density one. \\

\noindent \textbf{Question Q}: Suppose Hypothesis R holds, what is the Dirichlet density of $\mathcal{S}$ in $\mathcal{P}_{K,f}$
when $E$ is totally imaginary?\\

\noindent We do not know the answer; we even do not know if $\mathcal{S}$ is non-empty.
If we want to apply main theorem \ref{meta2} 
to the algebraic monodromy representations $\{\bG_\lambda\hookrightarrow\GL_{n,E_\lambda}\}_{\lambda\in\mathcal{P}_{E,f}}$ 
when $E$ is totally imaginary, then a positive Dirichlet density of $\mathcal{S}$ is necessary.

\subsection{Proofs of characteristic $p$ results}
Let $\mathcal{P}$ be $\mathcal{P}_{E,f}^{(p)}$. 

\subsubsection{Proof of Theorem \ref{thm1}}
By Proposition \ref{connect} and taking a finite Galois covering of $X$, 
we assume that $\bG_\lambda$ is connected for all $\lambda\in\mathcal{P}$.
It suffices to check conditions (a),(b),(c),(d) of main theorem \ref{meta1}
for the system of algebraic monodromy representations
$$\{\bG_\lambda\hookrightarrow\GL_{n,E_\lambda}\}_{\lambda\in\mathcal{P}}.$$
Conditions (a),(b),(c) follow directly from assertions (i),(ii),(iii) of Theorem \ref{Chin}.
Condition (d) holds by \cite[Cor. 7.9]{BGP19}, 
or by Proposition \ref{hyper} below, for almost all $\lambda$, the existence of 
a hyperspecial maximal compact subgroup of $\bG_\lambda(E_\lambda)$ implies that $\bG_\lambda$ is unramified \cite[$\mathsection1$]{Mi92}.
We are done by main theorem \ref{meta1}.\qed

\begin{prop}\label{hyper}
If $\bG_\lambda$ is connected for all $\lambda\in\mathcal{P}$,
 then the image of $\rho_\lambda$ is contained in a hyperspecial maximal compact subgroup $H_\lambda$
of $\bG_\lambda(E_\lambda)$ for almost all $\lambda$.
\end{prop}

\begin{proof}
Since $\pi_1(X)$ is compact, we may assume $\rho_\lambda(\pi_1(X))\subset\GL_n(\mathcal O_\lambda)$ 
after some change of coordinates $V_\lambda\cong E_\lambda^n$ for all $\lambda$.
The geometric \'etale fundamental group $\pi_1^{\geo}(X)$ of $X$ satisfies the short exact sequence
\begin{equation*}\label{fundgp}
1\to \pi_1^{\geo}(X)\to \pi_1(X)\to \Gal(\overline\F_q/\F_q)\to 1.
\end{equation*}
Denote the Zariski closure of $\rho_\lambda(\pi_1^{\geo}(X))$ in $\GL_{n,E_\lambda}$ by $\bG_\lambda^{\geo}$.
Let $\mathcal{G}_\lambda$ (resp. $\mathcal{G}_\lambda^{\geo}$) be the Zariski closure
of $\bG_\lambda$ (resp. the identity component of $\bG_\lambda^{\geo}$) in $\GL_{n,\mathcal{O}_\lambda}$
with special fiber $\mathcal{G}_{k_\lambda}$ (resp. $\mathcal{G}_{k_\lambda}^{\geo}$).
It suffices to prove that for almost all $\lambda$, 
$H_\lambda:=\mathcal{G}_\lambda(\mathcal{O}_\lambda)$ 
is a hyperspecial maximal compact subgroup
of $\bG_\lambda(E_\lambda)$. By Bruhat-Tits theory, this condition follows if 
we show that the $\mathcal{O}_\lambda$-group scheme
$\mathcal{G}_\lambda$ is reductive \cite[$\mathsection3.8.1$]{Ti79}.

By \cite[Thm. 7.3]{BGP19}, %or \cite[Cor. 8.4.1]{Ca17}, 
the $\mathcal{O}_\lambda$-group scheme $\mathcal{G}_\lambda^{\geo}$
is semisimple for almost all $\lambda$. Let $k_\lambda$ be the residue field of $E_\lambda$.
Since the $\mathcal{O}_\lambda$-group scheme $\mathcal{G}_\lambda$
is smooth with constant absolute rank for almost all $\lambda$ by Proposition \ref{gpscheme}(i)
and contains $\mathcal{G}_\lambda^{\geo}$ as a closed normal subgroup scheme,
the inequalities
\begin{align*}
\begin{split}
\dim \bG_\lambda&=\dim(\mathcal{G}_{k_\lambda})=\dim (\mathcal{G}_{k_\lambda}^{\geo}) 
+ \dim (\mathcal{G}_{k_\lambda}/\mathcal{G}_{k_\lambda}^{\geo})\\
&\geq \dim (\mathcal{G}_{k_\lambda}^{\geo}) 
+ \rk (\mathcal{G}_{k_\lambda}/\mathcal{G}_{k_\lambda}^{\geo})
=\dim \bG_\lambda^{\geo}
+ \rk (\bG_\lambda/\bG_\lambda^{\geo})=\dim \bG_\lambda.
\end{split}
\end{align*}
implies that the special fiber $\mathcal{G}_{k_\lambda}$
has trivial unipotent radical for almost all $\lambda$.
Therefore, the smooth group scheme $\mathcal{G}_\lambda$ is reductive over $\mathcal{O}_\lambda$ for almost all $\lambda$.
\end{proof}

\subsubsection{Proof of Corollary \ref{cor1}}
By Theorem \ref{thm1}(i), there is a connected reductive group $\bG$ defined over $E$ 
and an isomorphism $\phi_\lambda:\bG\times_E E_\lambda\to \bG_\lambda$ for each $\lambda\in\mathcal{P}$.
For almost all $\lambda$, the $\mathcal{O}_\lambda$-points $\bG(\mathcal{O}_\lambda)$ is 
well-defined (by finding some integral model $\mathcal{G}$ of $\bG$) 
and is a hyperspecial maximal compact subgroup of the $E_\lambda$-points $\bG(E_\lambda)$ \cite[$\mathsection 3.8.1$]{Ti79}.
Let $\bG_\lambda^{\ad}$ be the adjoint group of $\bG_\lambda$.
The subgroup $\bG_\lambda^{\ad}(E_\lambda)$ of $\Aut_{\overline E_\lambda}\bG_\lambda(E_\lambda)$ is transitive on the 
set of hyperspecial maximal compact subgroups of $\bG_\lambda(E_\lambda)$ \cite[$\mathsection 2.5$]{Ti79}.
Hence, by Proposition \ref{hyper} and adjusting $\phi_\lambda$ for almost all $\lambda$,
we assume $\phi_\lambda(\bG(\mathcal{O}_\lambda))=H_\lambda\subset\bG_\lambda(E_\lambda)$ for almost all $\lambda$.
Then the image of the map $\prod_{\lambda\in\mathcal{P}}\phi_\lambda^{-1}\circ\rho_\lambda$ is contained in the 
adelic points $\bG(\A_E^{(p)})$, which defines the desired $\bG$-valued adelic representation $\rho_\A^{\bG}$. This proves  assertion (i).

The proof of (ii) is exactly the same except we want to adjust the isomorphism of representation
$$\phi_\lambda: (\bG\hookrightarrow\GL_{n,E})\times_E E_\lambda \to (\bG_\lambda\hookrightarrow \GL_{n,E_\lambda})$$
in order to have
\begin{equation}\label{help}
\phi_\lambda^{-1}(H_\lambda)=\bG(\mathcal{O}_\lambda)\subset\GL_{n,E}(\mathcal{O}_\lambda)
\end{equation}
for almost $\lambda$ (the inclusion is defined by finding some integral 
model $\mathcal{G}\subset\GL_{n,\mathcal{O}_{E,S}}$ of $\bG\subset \GL_{n,E}$).
This can be achieved since $\bG_\lambda^{\ad}(E_\lambda)$ is a subgroup of 
$\Inn_{\overline E_\lambda}(\GL_{n,E_\lambda},\bG_\lambda)(E_\lambda)$ (see $\mathsection2.3.1$) as the representation $\rho_\lambda$ is
absolutely irreducible. \qed

\subsubsection{Proof of Corollary \ref{cor1.5}}
Find a smooth $\mathcal O_{E,S}$-model $\mathcal{G}\subset\GL_{n,\mathcal{O}_{E,S}}$ 
of $\bG\subset \GL_{n,E}$ for some finite $S\subset\mathcal{P}_{E,f}$.
Then by enlarging $S$ we obtain that 
the group scheme $\GL_{n,\mathcal{O}_{E,S}}\times\mathcal{O}_\lambda$ (resp. $\mathcal{G}\times\mathcal{O}_\lambda$)
is the group scheme associated to the hyperspecial maximal compact subgroup 
$\GL_{n,\mathcal{O}_{E,S}}(\mathcal{O}_\lambda)$ of $\GL_{n,\mathcal{O}_{E,S}}(E_\lambda)=\GL_n(E_\lambda)$
(resp. $\mathcal{G}(\mathcal{O}_\lambda)$ of $\bG(E_\lambda)$)
for all $\lambda\in\mathcal{P}\backslash S$ \cite[$\mathsection3.9.1$]{Ti79}.
We may assume that for all $\lambda\in\mathcal{P}\backslash S$, the inclusion
$$\mathcal{G}(\mathcal{O}_\lambda)\subset\GL_{n,\mathcal{O}_{E,S}}(\mathcal{O}_\lambda)=\GL_n(\mathcal{O}_\lambda)$$
gives the construction $\bG(\mathcal{O}_\lambda)\subset\GL_{n,E}(\mathcal{O}_\lambda)$ in 
\eqref{help}. Since the $\lambda$-component 
$$(\rho_\A^{\bG})_\lambda:\pi_1(X)\to \mathcal{G}(\mathcal{O}_\lambda)
\subset\GL_{n,\mathcal{O}_{E,S}}(\mathcal{O}_\lambda)=\GL_n(\mathcal{O}_\lambda)\subset\GL_n(E_\lambda)$$
of the adelic representation $\rho_\A^{\bG}$ is isomorphic to $\rho_\lambda$ by Corollary \ref{cor1}(ii), 
the representation $(\mathcal{G}\hookrightarrow \GL_{n,\mathcal{O}_{E,S}})\times\mathcal{O}_\lambda$
is isomorphic to $\mathcal{G}_\lambda\hookrightarrow \GL_{n,\mathcal{O}_\lambda}$, where $\mathcal G_\lambda$
is the Zariski closure of $\rho_\lambda(\pi_1(X))$ in $\GL_{n,\mathcal O_\lambda}$
after some choice of $\mathcal{O}_\lambda$-lattice in $V_\lambda$.  \qed

\begin{remark}\label{hyperremark}
The proofs of Corollaries \ref{cor1} and \ref{cor1.5} are standard in the sense that 
they only require the common $E$-forms $\bG$ and $\bG\subset\GL_{n,E}$ in Theorem \ref{thm1},
Proposition \ref{hyper}, and Bruhat-Tits theory \cite{Ti79}.
\end{remark}

\subsubsection{Proof of Corollary \ref{cor2}}
By Corollary \ref{cor1}(ii), there is a common $E$-form $\iota:\bG\hookrightarrow\GL_{n,E}$.
For each $\lambda\in\mathcal{P}$, choose an embedding $\overline E\to \overline E_\lambda$. 
We claim that the conjugacy class of the semisimple part $\rho_\lambda^{\bG}(\Fr_{\overline t})_s\in\bG(E_\lambda)$
is defined over $\overline E$ for all Frobenius element $\Fr_{\overline t}$ and all $\lambda\in\mathcal{P}$.
Indeed, by field extension, we obtain 
$$\rho_\lambda^{\bG}(\Fr_{\overline t})_s\in(\bG\times\overline E)(\overline E_\lambda)\stackrel{\iota_{\overline E}}{\hookrightarrow}\GL_{n,\overline E}(\overline E_\lambda).$$
It suffices to show that for any irreducible representation $\psi$ of $\bG\times\overline E$,
the trace of $\psi(\rho_\lambda^{\bG}(\Fr_{\overline t})_s)\in\overline E$. %(following the argument in \cite[Thm. 6.5]{BHKT19}).
This is true because the roots $\alpha$ of the characteristic polynomial $P_t(T)$
of $\rho_\lambda^{\bG}(\Fr_{\overline t})\in \GL_{n,\overline E}(\overline E_\lambda)$ 
belong to $\overline E$ by $E$-compatibility and 
$\psi$ is a subrepresentation of $\otimes^r \iota_{\overline E} \otimes^s \iota_{\overline E}^*$ for some $r,s\in\Z_{\geq0}$.

The next step is to show that for a fixed Frobenius element $\Fr_{\overline t}$, 
the conjugacy class of $\rho_\lambda^{\bG}(\Fr_{\overline t})_s$ in $\bG$ is independent of $\lambda$.
By \cite[Thm. 4.3.2]{D'Ad20} (\cite[Thm. 6.8, Cor. 6.9]{Ch04} when $X$ is a curve), 
there is a finite extension $F$ of $E$ and a connected reductive subgroup 
$\bG^{\sp}\subset\GL_{n,F}$ such that for all $\lambda\in\mathcal{P}$,
if $F_\lambda$ is a completion of $F$ extending $\lambda$ on $E$, then there exists an 
isomorphism of representations:
\begin{equation}\label{Chin1}
f_{F_\lambda}: (\bG^{\sp}\hookrightarrow\GL_{n,F})\times_F F_\lambda\stackrel{\cong}{\rightarrow}
(\bG_\lambda\hookrightarrow\GL_{n,E_\lambda})\times_{E_\lambda}F_\lambda.
\end{equation}
Moreover by \cite[Proof of Thm. 4.3.2]{D'Ad20} (\cite[Thm. 6.12]{Ch04} when $X$ is a curve), the representations 
\begin{equation}\label{Chin2}
\rho_\lambda^{\bG^{\sp}}:\pi_1(X)\stackrel{\rho_\lambda}{\rightarrow} (\bG_\lambda\times F_\lambda)(F_\lambda)\stackrel{f_{F_\lambda}^{-1}}{\rightarrow}\bG^{\sp}(F_\lambda)
\end{equation}
for all $\lambda$ form an $F$-compatible system of $\bG^{\sp}$-representations.
Hence, the conjugacy class $[\rho_\lambda^{\bG^{\sp}}(\Fr_{\overline t})_s]$ in $\bG^{\sp}$ is independent of $\lambda$.
If $\beta_\lambda\in N_{\GL_{n,E}}(\bG)(E_\lambda)$, we obtain the isomorphisms below
\begin{align}\label{Chin3}
\begin{split}
&(\bG\times_E E_\lambda\hookrightarrow \GL_{n,E_\lambda})\times_{E_\lambda} F_\lambda \stackrel{\beta_\lambda^{-1}\times F_\lambda}{\longrightarrow} (\bG\times_E E_\lambda\hookrightarrow \GL_{n,E_\lambda})\times_{E_\lambda} F_\lambda \\
=&(\bG\hookrightarrow \GL_{n,E})\times_E F_\lambda \stackrel{\phi_\lambda\times F_\lambda}{\longrightarrow}
(\bG_\lambda\hookrightarrow\GL_{n,E_\lambda})\times_{E_\lambda} F_\lambda  \stackrel{f_{F_\lambda}^{-1}}{\longrightarrow}
(\bG^{\sp}\hookrightarrow\GL_{n,F})\times_F F_\lambda
\end{split}
\end{align}
by Corollary \ref{cor1}(ii) and \eqref{Chin1}.
Fix $\lambda'\in\mathcal{P}$, define $\beta_{\lambda'}=id$, and embed $F_\lambda$ into $\C$ for all $\lambda\in\mathcal{P}$.
 It suffices to find $\beta_\lambda$ for all $\lambda\in\mathcal{P}\backslash\{\lambda'\}$ such that
\begin{equation}\label{innerhope}
\Phi_\lambda:=[(\beta_\lambda\times F_\lambda)\circ(\phi_\lambda\times F_\lambda)^{-1}\circ f_{F_\lambda} \circ f_{F_{\lambda'}}^{-1}\circ(\phi_{\lambda'}\times F_{\lambda'})]\times\C\in \Inn_{\C}(\bG\times\C).
\end{equation}
Then $\Phi_\lambda([\rho_{\lambda'}^{\bG}(\Fr_{\overline t})_s])=[\rho_\lambda^{\bG}(\Fr_{\overline t})_s]$ 
is an equality of conjugacy class in $\bG$ for all $\Fr_{\overline t}\in\Fr$.

For (i), since $\bG_\lambda$ is split and is irreducible on the ambient space, 
$N_{\GL_{n,E}}(\bG)(E_\lambda)$ surjects onto $\theta_{\overline E_\lambda}$ in \eqref{ses3}.
Thus, there is 
$\beta_\lambda\in N_{\GL_{n,E}}(\bG)(E_\lambda)$
such that $\Phi_\lambda \in \Inn_\C(\bG\times\C)=\bG^{\ad}(\C)$.
For (ii), take $\beta_\lambda=id$ for all $\lambda$. 
Since the outer automorphism group $\Out(\bG^{\der}\times\C)$ is trivial and $\bG\times\C\hookrightarrow\GL_{n,\C}$ is irreducible,
the image of $\Phi_\lambda$ in $\Out(\bG\times\C)$ is also trivial.
Hence, we conclude that in both cases (i) and (ii), $\Phi_\lambda$ is inner and 
$[\rho_{\lambda'}(\Fr_{\overline t})_s]=[\rho_\lambda(\Fr_{\overline t})_s]$ for
all $\Fr_{\overline t}$.
For $\Fr_{\overline t}\in\Fr$, it follows that the $\overline E$-conjugacy class $[\rho_\lambda(\Fr_{\overline t})_s]$ is independent of $\lambda\in\mathcal{P}$. 

Let $R$ be the affine coordinate ring of $\bG$.
For any $f\in R^{\bG}$, $f_t:=f([\rho_\lambda^{\bG}(\Fr_{\overline t})_s])\in \overline E$ is independent of $\lambda$
and also belongs to $E_\lambda$ for all $\lambda\in\mathcal{P}$. Therefore, $f_t\in E$ and 
we conclude that $\{\rho_\lambda^{\bG}\}_{\lambda\in\mathcal{P}}$ is an $E$-compatible system of $\bG$-representations.
The last claim of the Corollary is immediate.\qed

\begin{remark}\label{outerweak}
In general, if we can find for each $\lambda$ an element 
$\beta_\lambda\in \Inn_{\overline E_\lambda}(\GL_{n,E_\lambda},\bG_\lambda)(E_\lambda)$ 
such that $\Phi_\lambda$ (defined in \eqref{innerhope}) belongs to $\Inn_\C(\bG\times\C)$,
then the conclusion of the corollary also follows.\end{remark}

\subsubsection{Proof of Theorem \ref{thmcrys}}
For each $\lambda\in\mathcal{P}_{E,f}^{(p)}$, we have the chain $\bT_{\bar t,\lambda}\subset\bG_\lambda^\circ\subset\GL_{V_\lambda}\cong\GL_{n,E_\lambda}$. For each $v\in\mathcal{P}_{E,p}$, we have the chain $\bT_{t,v}\subset\bG_{t,v}^\circ\subset\GL_{V_{t,v}}\cong\GL_{n,E_v}$
by condition (b) of Theorem \ref{thmcrys}.
By identifying the algebraic monodromy groups as subgroups of $\GL_n$,
we obtain a chain $\bT_\lambda\subset\bG_\lambda^\circ\subset\GL_{n,E_\lambda}$ for each finite place $\lambda$ of $E$.
Here we simplify our notation by representing places of $E$ extending $p$ also as $\lambda$.
To prove the theorem, it suffices to find a torus $\bT\subset\GL_{n,E}$ and a chain $\bT^{\sp}\subset\bG^{\sp}\subset\GL_{n,E}$
such that conditions (a),(b),(c),(d) of main theorem \ref{meta2} for the system
$$\{\bT_\lambda\subset\bG_\lambda^\circ\subset\GL_{n,E_\lambda}\}_{\lambda\in\mathcal{P}_{E,f}}$$
hold. Note that the last sentence of Theorem \ref{thmcrys}(ii) follows from Remark \ref{simdiff}(6).
The verifications rely on the following result of D'Addezio (enhancing Theorem \ref{Chin})
and the fact that $\bT_\lambda$ is a maximal torus of $\bG_\lambda$ for all $\lambda\in\mathcal{P}_{E,f}$ 
by condition (a) of Theorem \ref{thmcrys}.

\begin{customthm}{B'}\label{DAd}\cite[Construction 4.2.1 (Frobenius tori), Theorem 4.3.2 and its proof]{D'Ad20}
Let $\rho_\bullet$ be the $E$-compatible system in Theorem \ref{thmcrys} and 
$\bT_\lambda\subset\bG_\lambda^\circ\subset\GL_{n,E_\lambda}$ be the chain defined above for each $\lambda\in\mathcal{P}_{E,f}$.
Then the following assertions hold.
\begin{enumerate}[(i)]
\item (Common $E$-form of formal characters): There exists a subtorus $\bT:=\bT_t$ of $\GL_{n,E}$ such that for all 
$\lambda\in\mathcal{P}_{E,f}$,
 $\bT_\lambda:=\bT\times_E E_\lambda$ is a maximal torus  of $\bG_\lambda^\circ$.
\item ($\lambda$-independence over an extension): 
There exist a finite extension $F$ of $E$ and a chain of subgroups $\bT^{\sp}\subset\bG^{\sp}\subset\GL_{n,F}$
such that $\bG^{\sp}$ is connected split reductive, $\bT^{\sp}$ is a split maximal torus of $\bG^{\sp}$, and for all 
$\lambda\in\mathcal{P}_{E,f}$, 
if $F_\lambda$ is a completion of $F$ extending $\lambda$ on $E$, then there exists an isomorphism of chain representations:
\begin{equation*}
f_{F_\lambda} :(\bT^{\sp}\subset\bG^{\sp}\hookrightarrow\GL_{n,F})\times_F F_\lambda\stackrel{\cong}{\rightarrow} 
(\bT_{\lambda}\subset\bG_\lambda^\circ\hookrightarrow\GL_{n, E_\lambda})\times_{E_\lambda} F_\lambda.
\end{equation*}
\item (Rigidity) The isomorphisms $f_{F_\lambda}$ in (ii) can be chosen such that the restriction isomorphisms $f_{F_\lambda}:\bT^{\sp}\times_F F_\lambda\to\bT_{\lambda}\times_{E_\lambda} F_\lambda$ admit a common $F$-form $f_F:\bT^{\sp}\to \bT\times_E F$ 
for all $\lambda\in\mathcal{P}_{E,f}$ and $F_\lambda$.
\end{enumerate}
\end{customthm}

Then conditions \ref{meta2}(a),(b),(c) are just Theorem \ref{DAd}(i),(ii),(iii). 
For condition \ref{meta2}(d), let $\bT_t\subset\GL_{n,E}$ be the $E$-form in Theorem \ref{DAd}(i). 
By $\mathsection2.6.1$ and conditions \ref{meta2}(a),(b),(c), there exists an isomorphism of representations
$$f_{\overline E}:(\bT^{\sp}\hookrightarrow\GL_{n,E})\times_E\overline E 
\stackrel{\cong}{\rightarrow} (\bT_t\hookrightarrow\GL_{n,E})\times_E\overline E $$
which produces the cocycle $\mu$ as in \eqref{defmu}.
Consider the short exact sequence of $E$-groups
\begin{equation}\label{muses1}
1\to \bC\to \bT^{\sp}\to \bT^{\sp}/\bC\to 1.
\end{equation}
By Proposition \ref{Chebo}, $\mu$ as Galois representation
acts on $\bC$ and hence \eqref{muses1} in an equivariant way, inducing the short exact sequence of $E$-groups 
by twisting ($\mathsection2.4.1$)
\begin{equation}\label{muses2}
\xymatrix{
&&\bT_t\ar[d]^= &&\\
1\ar[r] &{}_\mu\!\bC\ar[r]&{}_\mu\!\bT^{\sp}\ar[r]&{}_\mu\!(\bT^{\sp}/\bC)\ar[r]& 1
}
\end{equation} 
Since $\mu$ is constructed from $f_{\overline E}$, it has values in $\Inn_{\overline E}(\GL_{n,E},\bT^{\sp})$.
It follows that $\mu$ as Galois representations acts on the surjection of $E$-groups 
\begin{equation}\label{muses3}
(\bT^{\sp}\cap\SL_{n,E})^\circ \twoheadrightarrow \bT^{\sp}/\bC
\end{equation}
in an equivariant way. Hence, we obtain the surjection of $E$-groups
\begin{equation}\label{muses3}
(\bT_t\cap\SL_{n,E})^\circ={}_\mu\!(\bT^{\sp}\cap\SL_{n,E})^\circ \twoheadrightarrow {}_\mu\!(\bT^{\sp}/\bC).
\end{equation}
By condition (c) of Theorem \ref{thmcrys}, $E$ has a real place.
Since $(\bT_t\cap\SL_{n,E})^\circ$ is anisotropic over each real place of $E$ 
by Proposition \ref{anisoreal}, Remark \ref{pureR}(1), and
the fact that $\rho_\bullet$ is pure of weight $w$, it follows that the twisted torus ${}_\mu\!(\bT^{\sp}/\bC)$
is also anisotropic over each real place of $E$ by the surjection \eqref{muses3}.\qed

\subsection{Proofs of characteristic zero results}
Let $\mathcal{P}$ be $\mathcal{P}_{\Q,f}$. 

\subsubsection{Proof of Theorem \ref{thm2}}
It suffices to check conditions (a),(b),(c),(d) of main theorem \ref{meta2}
for the system of algebraic monodromy representations
$$\{\bG_\ell\hookrightarrow\GL_{n,\Q_\ell}\}_{\ell\in\mathcal{P}}$$
and note Remark \ref{simdiff}(6).
Since the conditions remain the same after taking any finite extension $F$ of $K$, it is free to do so.\\

\textit{Condition \ref{meta2}(a)}: By condition \ref{thm2}(a) and Remark \ref{Dirichlet}(6), 
there is a place $v\in\mathcal{P}_{K,f}\backslash S$ such that the Frobenius torus $\bT_{\overline v,\ell}$ 
is a maximal torus of $\bG_\ell$ for all $\ell$ not equal to $p:=p_v$ and the local representation
$V_p$ of $\Gal(\overline K_v/K_v)$ is ordinary. It remains to check the condition for the places over $p$.

Let $Y_v$ be the special fiber of a smooth model of $Y$ over $\mathcal{O}_v$ and
$M_v:=H^w(Y_v/\mathcal{O}_v)\otimes_{\mathcal{O}_v}K_v$ be the crystalline cohomology group,
which belongs to the category $\textbf{MF}_{K_v}^f$ of \emph{weakly admissible filtered modules} over $K_v$.
There are algebraic subgroups $(\bH_{V_p}\subset\GL_{V_p})\times_{\Q_p} K_v$ and $\bH_{M_v}\subset\GL_{M_v}$
such that their tautological representations (via the \emph{mysterious functor} of Fontaine) 
are inner forms of each other, in particular isomorphic over $\overline \Q_p$,
$$(\bH_{V_p}\hookrightarrow\GL_{V_p})\times_{\Q_p} \overline\Q_p \stackrel{\iota \cong}{\longrightarrow}(\bH_{M_v}\hookrightarrow\GL_{M_v})\times_{K_v}\overline\Q_p,$$
where $\bH_{V_p}$ is the algebraic monodromy group of the local crystalline 
representation $\rho_p:\Gal(\overline K_v/K_v)\to \GL(V_p)$
and $\bH_{M_v}$ is the automorphism group of the fiber functor 
on the full Tannakian subcategory of $\textbf{MF}_{K_v}^f$ generated by $M_v$ 
that assigns a filtered $K$-module the underlying $K$-vector space (see \cite[$\mathsection2$]{Pi98}).
Let $m_v$ be the degree $[K_v:\Q_p]$ and $f_{M_v}$ the \emph{crystalline Frobenius}. 
By Katz-Messing \cite{KM74} 
(see \cite[Thm. 3.10]{Pi98}), $f_{M_v}^{m_v}$ is an element of $H_{M_v}(K_v)\subset\GL(M_v)$ with characteristic polynomial 
equal to $P_v(T)$, the characteristic polynomial of $\rho_\ell(\Fr_v)$ ($\ell\neq p$).

Let $\Phi_{V_p}$ be the element in $\bH_{V_p}(\overline \Q_p)$ corresponding to $f_{M_v}^{m_v}\in H_{M_v}(\overline\Q_p)$
via $\iota$. The group $\bH_{V_p}$ is generated by cocharacters (connected) and 
the smallest algebraic subgroup containing $\Phi_{V_p}$ \cite[Prop. 2.6]{Pi98}.
It is connected because the characteristic polynomial of $\Phi_{V_p}\in \GL_{V_p}$ is $P_v(T)$
and the (maximal) Frobenius torus $\bT_{\overline v,\ell}$ is equal to $\bS_{\overline v,\ell}$ by Remark \ref{Dirichlet}(1).
Let $V_p^{\ss}$ be the semisimplification of the representation $\bH_{V_p}\hookrightarrow\GL_{V_p}$. 
Since the local representation $V_p$ is ordinary,  
$\bH_{V_p}$ is solvable \cite[Prop. 2.9]{Pi98} and its image $\bH_{V_p}^{\red}$ in $\GL_{V_p^{\ss}}$ is a torus.
Since the conjugacy class of $\Phi_{V_p}$ in $\bH_{V_p}$ is defined over $\Q_p$ \cite[Prop. 2.2]{Pi98} and $\bH_{V_p}^{\red}$ is abelian,
the image of $\Phi_{V_p}$ in $\bH_{V_p}^{\red}$, denoted by $\Phi_{V_p}^{\red}$, 
belongs to $\bH_{V_p}^{\red}(\Q_p)$. 
By the splitting of the surjection $\bH_{V_p}\twoheadrightarrow\bH_{V_p}^{\red}$,
there is a semisimple element $\Phi_{\overline v}\in\bH_{V_p}(\Q_p)\subset\GL(V_p)$
with characteristic polynomial $P_v(T)$.
The smallest algebraic subgroup of $\bH_{V_p}\subset\bG_p$ containing 
$\Phi_{\overline v}$ is a $\Q_p$-maximal torus $\bT_{\overline v,p}$ of $\bG_p$ because
the absolute rank of $\bG_\ell$ is independent of $\ell$ by Corollary \ref{FMTcor}(ii) and
$\bT_{\overline v,\ell}$ is a maximal Frobenius torus.
Since the characteristic polynomials of 
$\Phi_{\overline v}$ and $\rho_\ell(\Fr_{\overline v})_s$ for all $\ell\neq p$ are equal to $P_v(T)$, the tori representations
$\bT_{\overline v,\ell}\hookrightarrow\GL_{V_\ell}$ for all $\ell$ admit a common $\Q$-form
$\bT_v\hookrightarrow\GL_{n,\Q}$.\qed \\

\textit{Condition \ref{meta2}(b)}: This is just condition \ref{thm2}(b).\qed \\

\textit{Condition \ref{meta2}(c)}: By Proposition \ref{c2c1} and  condition \ref{thm2}(c), it 
suffices to check condition (c'-bi) in $\mathsection2.2$. 
Identify $\GL_{V_\ell}$ as $\GL_{n,\Q_\ell}$ for all $\ell$.
We employ the technique in \cite[Prop. 3.18, Thm. 3.19]{Hu13}. 
Let $\{\psi_\ell\}_{\ell\in\mathcal{P}}$ be an $r$-dimensional
semisimple $\Q$-compatible system of abelian $\ell$-adic representations
of $\Gal(\overline K/K)$. Let $\bS_\ell\subset\GL_{r,\Q_\ell}$ be the algebraic monodromy group
of $\psi_\ell$ and assume $\bS_\ell$ is torus and with the largest possible dimension 
$d_K$ (\cite[Thm. 3.8]{Hu13}) for all $\ell$. Consider the semisimple $\Q$-compatible system
$\{\rho_\ell\oplus\psi_\ell\}_{\ell\in\mathcal{P}}$ 
and let $\bG_\ell'\subset \GL_{n,\Q_\ell}\times\GL_{r,\Q_\ell}$ be the algebraic monodromy group at $\ell$. Let 
\begin{equation}\label{proj}
p_{i,\ell}':\bG_\ell'\to \GL_{n,\Q_\ell}\times\GL_{r,\Q_\ell}
\end{equation}
be the projection to the $i$th factor, $i=1,2$.
By considering $p_{1,\ell}'$, there is a diagonalizable subgroup $\bD_\ell$ of $\bS_\ell$
with a short exact sequence
\begin{equation}\label{sesgg}
1\to \bD_\ell\to \bG_\ell'\to\bG_\ell\to 1.
\end{equation}

\noindent Let $k$ be the number of components of $\bD_{\ell'}$ for some prime $\ell'$. 
By replacing $\{\rho_\ell\oplus\psi_\ell\}_{\ell\in\mathcal{P}}$ with $\{\rho_\ell\oplus\psi_\ell^k\}_{\ell\in\mathcal{P}}$,
we assume that $\bD_{\ell'}$ is connected.
Since $\bG_{\ell'}$ is connected, $\bG_{\ell'}'$ is connected by \eqref{sesgg}.
Hence, $\bG_{\ell}'$ is connected for all $\ell$ by Proposition \ref{connect}. 
Since the dimension of the center of $\bG_\ell'$
is $d_K=\dim\bS_\ell$ for all $\ell$ \cite[Prop. 3.8, Thm. 3.19]{Hu13}, it follows that for all $\ell$
\begin{equation}\label{key}
\ker(p_{2,\ell}')^\circ=(\bG_\ell')^{\der}=\bG_\ell^{\der}.
\end{equation}

\begin{customprop}{P}\label{CM}\cite{Fa83},\cite[Chap. II]{Se98},\cite[Chap. 1, Thm. 4.1]{Sc88}
Fix a prime $\ell''$, there exist a finite extension $F$ of $K$ and an abelian variety $A$ over $F$ that is
a direct product of CM abelian varieties with the following properties. Let
$$\{\epsilon_{\ell}:\Gal(\overline F/F)\to \GL(W_{\ell})\}_{\ell\in\mathcal{P}}$$ 
be the semisimple compatible system of Galois representations with $W_\ell:=H^1(A_{\overline F},\Q_{\ell})$. 
Let $\bM_\ell$ and $\bG_\ell''$ be respectively the algebraic monodromy group of 
the Galois representation $\epsilon_\ell$ and $\rho_\ell\oplus \epsilon_\ell$ of $\Gal(\overline F/F)$.
Then the following assertions hold. 
\begin{enumerate}[(i)]
\item For all $\ell$, $\bG_\ell''$ is connected and $\bM_\ell$ is a torus with dimension independent of $\ell$.
\item The restriction map $\psi_{\ell''}:\Gal(\overline F/F)\to \GL_r(\overline\Q_{\ell''})$ factors through 
a morphism $\bM_{\ell''}\times\overline\Q_{\ell''}\to \GL_{r,\overline\Q_{\ell''}}$.
\end{enumerate}
\end{customprop}

Since $\bG_\ell'$ is connected for all $\ell$, it is again the algebraic monodromy group
of the restriction of $\rho_\ell\oplus \psi_\ell$ to $\Gal(\overline F/F)$.
Again, let
$p_{i,\ell}'':\bG_\ell''\to \GL_{n,\Q_\ell}\times\bM_\ell$
be the projection to the $i$th factor, $i=1,2$. Since there exists 
a surjective map $\bG_{\ell''}''\to \bG_{\ell''}'$ by Proposition \ref{CM}(ii), it follows from \eqref{key}
and the connectedness of $\bG_{\ell''}''$ (Proposition \ref{CM}(i)) that 
\begin{equation}\label{key2}
\ker(p_{2,\ell''}'')^\circ=\bG_{\ell''}^{\der}=(\bG_{\ell''}'')^{\der}
\end{equation}
is the semisimple part of $\bG_{\ell''}''$. Since $\{\rho_\ell\oplus\epsilon_\ell\}_{\ell\in\mathcal{P}}$
is a compatible system of representations of $\Gal(\overline F/F)$,
 the semisimple rank and the dimension of the center of $\bG_\ell''$ is independent of $\ell$ \cite[Thm. 3.19]{Hu13}.
This, together with \eqref{key2} and the $\ell$-independence of $\dim \bM_\ell$ (Proposition \ref{CM}(i)), imply that  
\begin{equation}\label{key3}
\ker(p_{2,\ell}'')^\circ=(\bG_{\ell}'')^{\der}
\end{equation}
for all $\ell$. Hence, if $\bT_{\ell}''$ is a maximal torus of $\bG_\ell''$, then
$$\ker (p_{2,\ell}'':\bT_\ell''\to \bM_\ell)^\circ\subset p_{1,\ell}''(\bT_\ell'')\hookrightarrow\GL_{n,\Q_\ell}$$
is a formal bi-character of $\bG_\ell$.

Finally, we follow the strategy in condition \ref{meta2}(a).
If $v$ is a finite place of $F$ such that $Y\times_K F$ and $A$ have good reduction,
then write $p:=p_v$ and the Frobenius element $\Fr_{\overline v}$ have
characteristic polynomials $P_v(T)\in\Q[T]$ on $V_\ell$ and $Q_v(T)\in\Q[T]$ on $W_\ell$ for all $\ell\neq p$. 
By condition \ref{thm2}(a), there exists $v\in\mathcal{P}_{F,f}$  such that
the Frobenius torus $\bT_{\overline v,\ell}''\subset\bG_\ell''$  is  maximal for all $\ell\neq p$ 
and the local representation $V_p$ of $\Gal(\overline F_v/F_v)$ is ordinary.
Then we let $\bH_{V_p\oplus W_p}\subset\GL_{V_p}\times\GL_{W_p}$ be 
the algebraic monodromy group of the local crystalline representation 
$$\rho_p\oplus\epsilon_p:\Gal(\overline F_v/F_v)\to \GL(V_p)\times\GL(W_p)$$ and 
$\bH_{V_p\oplus W_p}^{\red}$ its image (semisimplification)
in the (abelian) diagonalizable subgroup $\bH_{V_p}^{\red}\times \bM_p\subset \GL_{V_p}\times\GL_{W_p}$,
where $\bH_{V_p}^{\red}$ is defined in condition \ref{meta2}(a).
Since the local representation $V_p\oplus W_p$ is crystalline, we conclude by 
repeating the arguments in the second and third paragraphs of condition \ref{meta2}(a)
that there exists an element in $\bH_{V_p\oplus W_p}^{\red,\circ}(\Q_p)$ lifting to 
a semisimple element $\Phi_{\overline v}''\in \bH_{V_p\oplus W_p}^\circ(\Q_p)\in\bG_p''(\Q_p)$ with 
characteristic polynomials $P_v(T)$ on $V_p$ and $Q_v(T)$ on $W_p$.
The smallest algebraic subgroup $\bT_{\overline v,p}''$ of $\bG_p''$ containing $\Phi_{\overline v}''$
is also a maximal torus as the absolute rank of $\bG_\ell''$ is independent of $\ell$.
By using the polynomials $P_v(T),Q_v(T)\in\Q[T]$, we construct a common $\Q$-form 
$\bT_v''\hookrightarrow \GL_{n,\Q}\times\GL_{2\dim A,\Q}$ of 
the formal characters $\bT_{\overline v,\ell}''\hookrightarrow \GL_{n,\Q_\ell}\times\GL_{W_\ell}$ 
of $\bG_\ell''\subset \GL_{n,\Q_\ell}\times\GL_{W_\ell}$ for all $\ell$
such that  
\begin{equation}\label{formbi}
\ker (p_{2}:\bT_v''\to \GL_{2\dim A,\Q})^\circ\subset p_{1}(\bT_v'')\hookrightarrow\GL_{n,\Q}
\end{equation}
is a common $\Q$-form of formal bi-characters of $\bG_\ell\subset\GL_{n,\Q_\ell}$ for all $\ell$,
where $p_1,p_2$ are the obvious projections.
We may replace $\bT_v\hookrightarrow\GL_{n,\Q}$ constructed in 
condition \ref{meta2}(a) with $p_{1}(\bT_v'')\hookrightarrow\GL_{n,\Q}$ in \eqref{formbi}.\qed \\

\textit{Condition \ref{meta2}(d)}: Let $\bT_v\subset\GL_{n,\Q}$ be the $\Q$-form we found in condition \ref{meta2}(a). 
This part is exactly the same as the verification of condition \ref{meta2}(d) for Theorem \ref{thmcrys}
once we replace the field $E$ by $\Q$ and the $E$-torus $\bT_t$ by the $\Q$-torus $\bT_v$. \qed

\subsubsection{Proofs of Corollaries \ref{cor2.1} and \ref{cor2.2}}
Since Corollaries \ref{cor2.1} and \ref{cor2.2} (of Theorem \ref{thm2}) assume Hypothesis H, 
they follow along the same lines in 
the proofs of Corollaries \ref{cor1} and \ref{cor1.5} by Remark \ref{hyperremark}.\qed

\subsubsection{Galois maximality and Hypothesis H}
Let $K$ be a number field and $\{\rho_\ell:\Gal(\overline K/K)\to \GL_n(\Q_\ell)\}_{\ell\in\mathcal{P}}$
be a $\Q$-compatible system of $\ell$-adic representations.
Let $\Gamma_\ell$ be the image of $\rho_\ell$ and $\bG_\ell$ be the algebraic monodromy group of $\rho_\ell$.
Then $\Gamma_\ell$ is a compact subgroup of $\bG_\ell(\Q_\ell)$.
Suppose for simplicity that $\bG_\ell$ is connected for all $\ell$.
Denote by $\bG_\ell^{\ss}$ be the quotient of $\bG_\ell$ by its radical and 
by $\bG_\ell^{\sc}$ the simply-connected covering of $\bG_\ell^{\ss}$.
Denote by $\Gamma_\ell^{\ss}$ the image of $\Gamma_\ell$ in $\bG_\ell^{\ss}(\Q_\ell)$
and by $\Gamma_\ell^{\sc}$ the inverse image of $\Gamma_\ell^{\ss}$ 
in $\bG_\ell^{\sc}(\Q_\ell)$. When $\ell\gg 0$ compared to the absolute 
rank of $\bG_\ell^{\sc}$, a compact subgroup $H_\ell$ of $\bG_\ell^{\sc}(\Q_\ell)$
is hyperspecial maximal compact if the ``mod $\ell$ reduction'' 
of $H_\ell$ is ``of the same Lie type'' as the semisimple group $\bG_\ell^{\sc}$ (see \cite{HL16}).
In \cite{Lar95}, Larsen proved that 
the set of primes $\ell$ for which  $\Gamma_\ell^{\sc}\subset\bG_\ell^{\sc}(\Q_\ell)$ 
is hyperspecial maximal compact is of Dirichlet density one and 
conjectured the following.

\begin{customconj}{S}\label{Larsenconj}
For all $\ell\gg0$, $\Gamma_\ell^{\sc}$ is a hyperspecial maximal compact subgroup of $\bG_\ell^{\sc}(\Q_\ell)$.
\end{customconj}

This conjecture is also related to the conjectures of Serre on maximal motives \cite[11.4, 11.8]{Se94}.
Suppose the $\ell$-adic compatible system is 
$\{H^w(Y_{\overline K},\Q_\ell)\}_{\ell\in\mathcal{P}}$, 
where $Y$ is a smooth projective variety defined over a number field $K$.
When $Y$ is an elliptic curve without complex multiplication and $w=1$, a well-known 
theorem of Serre states that for $\ell\gg0$, $\Gamma_\ell\cong\GL_2(\Z_\ell)$ 
is maximal compact in $\GL(V_\ell)$ \cite{Se72}.
In general, by studying the \emph{mod $\ell$ compatible system} $\{H^w(Y_{\overline K},\F_\ell)\}_{\ell\gg 0}$,
we proved that $\Gamma_\ell\subset\bG_\ell(\Q_\ell)$ is large in the sense that 
its mod $\ell$ reduction has ``the same semisimple rank'' as 
the algebraic group $\bG_\ell$ for $\ell\gg0$ \cite[Thm. A]{Hu15}.
This result is crucial to the following.

\begin{customthm}{T}\label{hyperex}\cite{HL16,HL20}
Let $\rho_\bullet$ be the $\Q$-compatible system \eqref{cs} arising from a smooth projective variety $Y$ defined over $K$.
Conjecture \ref{Larsenconj} holds in the following cases.
\begin{enumerate}[(i)]
\item For $\ell\gg0$, $\bG_\ell^{\sc}$ is of type A, i.e., isomorphic to $\prod_i \SL_{n_i}$ over $\overline\Q_\ell$.
\item $Y$ is an abelian variety. 
\item $Y$ is a hyper-K\"ahler variety and degree $w=2$.
\end{enumerate}
\end{customthm}

Let $\Lambda_\ell$ be a $\Z_\ell$-lattice of $\Q_\ell^n$ that is invariant under $\Gamma_\ell$
and $\mathcal{G}_\ell$ (resp. $\mathcal{G}_\ell^{\der}$) the Zariski closure of $\Gamma_\ell$ 
(resp. the derived group $[\Gamma_\ell,\Gamma_\ell]$) in $\GL_{n,\Z_\ell}$, endowed 
with the unique reduced closed subscheme structure. Write 
$\mathcal{G}_{\F_\ell}$ (resp. $\mathcal{G}_{\F_\ell}^{\der}$)
as the special fiber.

\begin{thm}\label{QH}
Suppose $\bG_\ell$ is connected reductive for all $\ell$.
Then Conjecture \ref{Larsenconj} implies that $\mathcal{G}_\ell$ is a reductive group scheme over $\Z_\ell$ for $\ell\gg0$ and Hypothesis H.
\end{thm}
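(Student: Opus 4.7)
The plan is to mirror the argument of Proposition \ref{hyper}, replacing the geometric-fundamental-group input (the semisimplicity of the geometric closure $\mathcal{G}_\ell^{\geo}$ for $\ell \gg 0$) by a derived-group input extracted directly from Conjecture \ref{Larsenconj}. The skeleton is: first show that $\mathcal{G}_\ell^{\der}$ is semisimple over $\Z_\ell$ for $\ell \gg 0$; then use smoothness with constant absolute rank (Proposition \ref{gpscheme}(i)) together with the same dimension-rank inequality used at the end of Proposition \ref{hyper} to force $\mathcal{G}_\ell$ to be reductive over $\Z_\ell$. Hypothesis H then follows at once, because $\Gamma_\ell \subset \mathcal{G}_\ell(\Z_\ell)$ by construction of the Zariski closure, and $\mathcal{G}_\ell(\Z_\ell)$ is a hyperspecial maximal compact subgroup of $\bG_\ell(\Q_\ell)$ whenever $\mathcal{G}_\ell/\Z_\ell$ is reductive, by Bruhat--Tits theory.

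To deduce the semisimplicity of $\mathcal{G}_\ell^{\der}$ from the Conjecture, I would proceed in three steps. First, Conjecture \ref{Larsenconj} identifies $\Gamma_\ell^{\sc}$, for $\ell \gg 0$, as the $\Z_\ell$-points of a smooth simply-connected semisimple $\Z_\ell$-group scheme $\mathcal{M}_\ell^{\sc}$ with generic fiber $\bG_\ell^{\sc}$. Since the absolute rank of $\bG_\ell$ is $\ell$-independent by Corollary \ref{FMTcor}(ii), the degree of the central isogeny $\pi\colon \bG_\ell^{\sc} \to \bG_\ell^{\der}$ is bounded uniformly in $\ell$, so for $\ell \gg 0$ it extends to an \'etale isogeny of smooth semisimple $\Z_\ell$-group schemes $\mathcal{M}_\ell^{\sc} \to \mathcal{M}_\ell^{\der}$; in particular $\pi(\Gamma_\ell^{\sc}) = \mathcal{M}_\ell^{\der}(\Z_\ell)$ is a hyperspecial maximal compact of $\bG_\ell^{\der}(\Q_\ell)$. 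Second, since $\Gamma_\ell^{\sc}$ is by definition the full preimage of $\Gamma_\ell^{\ss}$, any commutator $[g,h]$ with $g,h \in \Gamma_\ell$ lies in $\bG_\ell^{\der}(\Q_\ell)$ and has image in $[\Gamma_\ell^{\ss}, \Gamma_\ell^{\ss}] \subset \Gamma_\ell^{\ss}$, so any lift of $[g,h]$ to $\bG_\ell^{\sc}(\Q_\ell)$ belongs to $\Gamma_\ell^{\sc}$; hence $[\Gamma_\ell, \Gamma_\ell] \subset \pi(\Gamma_\ell^{\sc}) = \mathcal{M}_\ell^{\der}(\Z_\ell)$. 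Third, after a $\GL_n(\Z_\ell)$-conjugation of $\Lambda_\ell$ that matches the hyperspecial $\mathcal{M}_\ell^{\der}(\Z_\ell)$ with a lattice stabilizer (available for $\ell \gg 0$ by transitivity of the adjoint group on hyperspecials), the abstract $\mathcal{M}_\ell^{\der}$ is realized as a closed $\Z_\ell$-subgroup scheme of $\GL_{n,\Z_\ell}$ containing $\mathcal{G}_\ell^{\der}$, and a dimension comparison using Proposition \ref{gpscheme}(i) then yields $\mathcal{G}_\ell^{\der} = \mathcal{M}_\ell^{\der}$, so $\mathcal{G}_\ell^{\der}$ is semisimple.

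Granting this, the chain
\[
\dim\bG_\ell \;=\; \dim \mathcal{G}_{\F_\ell} \;\geq\; \dim \mathcal{G}_{\F_\ell}^{\der} + \rk(\mathcal{G}_{\F_\ell}/\mathcal{G}_{\F_\ell}^{\der}) \;=\; \dim \bG_\ell^{\der} + \rk(\bG_\ell/\bG_\ell^{\der}) \;=\; \dim\bG_\ell
\]
from the end of the proof of Proposition \ref{hyper} applies verbatim (with $\mathcal{G}_\ell^{\der}$ in place of $\mathcal{G}_\ell^{\geo}$, and noting that $\bG_\ell/\bG_\ell^{\der}$ is a torus, so its dimension equals its rank), forcing the special fiber $\mathcal{G}_{\F_\ell}$ to have trivial unipotent radical; hence $\mathcal{G}_\ell$ is reductive over $\Z_\ell$, and Hypothesis H follows as sketched above. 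The hard part, and what I expect to be the main obstacle, is the lattice-matching step of the previous paragraph: $\mathcal{M}_\ell^{\der}$ is built abstractly via Bruhat--Tits from the hyperspecial $\pi(\Gamma_\ell^{\sc})$, while $\mathcal{G}_\ell^{\der}$ is defined concretely as a Zariski closure inside $\GL_{n,\Z_\ell}$ with respect to the fixed lattice $\Lambda_\ell$; reconciling the two integral models uniformly in $\ell$ requires carefully exploiting the $\GL_n(\Z_\ell)$-conjugacy of the relevant hyperspecials in $\bG_\ell^{\der}(\Q_\ell)$, together with the fact that $\Gamma_\ell$ itself stabilizes $\Lambda_\ell$, so that the two models necessarily coincide for $\ell \gg 0$.
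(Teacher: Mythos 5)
Your skeleton matches the paper's at the outer level (control the semisimple part, then a dimension--rank comparison of special fibers forces reductivity of $\mathcal{G}_{\F_\ell}$, and Hypothesis~H follows from Bruhat--Tits), but at the decisive step you take a genuinely different route from the paper, and that route has the gap you yourself flag but do not close.

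Two concrete issues. First, a minor point of orientation: the paper proves the \emph{opposite} inclusion $\pi_\ell(\Gamma_\ell^{\sc})\subset[\Gamma_\ell,\Gamma_\ell]$ (using that the hyperspecial $\Gamma_\ell^{\sc}$ is perfect for $\ell\gg0$), precisely because the goal is a \emph{lower} bound on the special fiber $\mathcal{G}_{\F_\ell}^{\der}$; your inclusion $[\Gamma_\ell,\Gamma_\ell]\subset\pi(\Gamma_\ell^{\sc})$ (which is correct -- the canonical lift of a commutator to $\bG_\ell^{\sc}(\Q_\ell)$ is Galois-invariant) only gives an upper bound and by itself does not control the Zariski closure from below. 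Second, and more seriously, the lattice-matching step is not a detail to be finessed; it is the entire theorem. Conjugating $\Lambda_\ell$ by an element of $\GL_n(\Z_\ell)$ is a no-op (it preserves the lattice and hence the isomorphism class of $\mathcal{G}_\ell^{\der}$), and even allowing $\GL_n(\Q_\ell)$-conjugation one loses $\Gamma_\ell$-stability. More fundamentally, there is no a priori reason the schematic closure $\mathcal{G}_\ell^{\der}\subset\GL_{n,\Z_\ell}$ of a hyperspecial compact subgroup should coincide with its abstract Bruhat--Tits model $\mathcal{M}_\ell^{\der}$: the closure is flat with $\Z_\ell$-points equal to the hyperspecial, but its special fiber can still fail to be reductive (and by uniqueness of the smooth model with connected reductive special fiber, equality would hold \emph{if and only if} reductivity holds -- so you have restated the goal rather than proved it).

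This is exactly where the paper's proof does its real work, and by a different mechanism from yours. After establishing $\pi_\ell(\Gamma_\ell^{\sc})\subset[\Gamma_\ell,\Gamma_\ell]\subset\mathcal{G}_\ell^{\der}(\Z_\ell)$, the paper reduces mod $\ell$ and brings in Nori's envelope $\bS_\ell$ of the finite group $\overline{\pi_\ell(\Gamma_\ell^{\sc})}\subset\GL_n(\F_\ell)$. The key technical content, Proposition~\ref{Noricontain}, shows $\bS_\ell\subset\mathcal{G}_{\F_\ell}^{\der,\circ}$ for $\ell\gg0$; this is not formal and is proved via quantitative index bounds (drawing on \cite[Cor. 2.5]{HL17}, \cite[Lemma 3.5]{Nor87}, \cite[Thm. 7]{Lar10}) comparing $|\bS_\ell(\F_\ell)|$, $|\overline{\pi_\ell(\Gamma_\ell^{\sc})}|$, and $|\mathcal{G}_{\F_\ell}^{\der}(\F_\ell)|$. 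Combined with the rank and dimension dictionaries \eqref{eq0}--\eqref{eq3} relating Nori envelopes, hyperspecials, and special fibers, the chain \eqref{eq4} then forces $\mathcal{G}_{\F_\ell}$ to have trivial unipotent radical. None of this is present in your proposal; the Nori-theoretic lower bound on the special fiber is what substitutes for the identification $\mathcal{G}_\ell^{\der}=\mathcal{M}_\ell^{\der}$ you would need, and your sketch does not offer an alternative way to obtain it.
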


\begin{proof}
Let $\pi_\ell:\bG_\ell^{\sc}(\Q_\ell)\to\bG_\ell^{\der}(\Q_\ell)\to\bG_\ell(\Q_\ell)$
be the natural morphism. Consider the natural commutative diagram where each vertical map is the commutator map
\begin{equation*}
\xymatrix{
\bG_\ell^{\sc}\times\bG_\ell^{\sc} \ar[d] \ar[r] & \bG_\ell\times\bG_\ell \ar[d] \ar[r] & \bG_\ell^{\ss}\times\bG_\ell^{\ss} \ar[d]\\
\bG_\ell^{\sc} \ar[r]^{\pi_\ell} & \bG_\ell\ar[r] &\bG_\ell^{\ss}.}
\end{equation*}
Then by the definition of $\Gamma_\ell^{\sc}$, the inclusion 
$\pi_\ell([\Gamma_\ell^{\sc},\Gamma_\ell^{\sc}])\subset [\Gamma_\ell,\Gamma_\ell]$ holds.
Suppose Conjecture \ref{Larsenconj} holds, then the hyperspecial maximal compact 
$\Gamma_\ell^{\sc}$ is perfect for $\ell\gg0$ (see e.g., \cite[Proof of Cor. 11]{HL16}).
Thus, for $\ell\gg0$, it follows that
$\pi_\ell(\Gamma_\ell^{\sc})\subset[\Gamma_\ell,\Gamma_\ell]$.
The closed subscheme $\mathcal{G}_\ell\subset\GL_{n,\Z_\ell}$ 
is smooth by Proposition \ref{gpscheme} for $\ell\gg0$. 
Also, the subscheme $\mathcal{G}_\ell^{\der}$ is smooth for $\ell\gg0$ 
(see e.g., \cite[Thm. 9.1.1, $\mathsection9.2.1$]{CHT17}, note that $\bG_\ell^{\der}$ is connected).
Then for $\ell\gg0$,
\begin{equation}\label{hyperchain}
\pi_\ell(\Gamma_\ell^{\sc})\subset[\Gamma_\ell,\Gamma_\ell]\subset\mathcal{G}_\ell^{\der}(\Z_\ell)\subset\mathcal{G}_\ell(\Z_\ell)\subset\GL_n(\Z_\ell).
\end{equation}
If we can prove that $\mathcal{G}_\ell$ is a reductive group scheme over $\Z_\ell$, then 
$\mathcal{G}_\ell(\Z_\ell)\subset\bG_\ell(\Q_\ell)$ is hyperspecial maximal compact by Bruhat-Tits theory.
So it remains to prove that the special fiber $\mathcal{G}_{\F_\ell}$ is reductive.

By taking mod $\ell$ reduction of \eqref{hyperchain}, we obtain by Hensel's lemma that for $\ell\gg0$,
\begin{equation}\label{hyperchain2}
\overline{\pi_\ell(\Gamma_\ell^{\sc})}\subset\overline{\mathcal{G}_\ell^{\der}(\Z_\ell)}=\mathcal{G}_{\F_\ell}^{\der}(\F_\ell)\subset\GL_n(\F_\ell).
\end{equation}
For $\ell\gg n$, let $\bS_\ell\subset\GL_{n,\F_\ell}$ be the \emph{Nori envelope} \cite{Nor87}
of the finite subgroup  $\overline{\pi_\ell(\Gamma_\ell^{\sc})}\subset\GL_n(\F_\ell)$.
It is the connected algebraic subgroup of $\GL_{n,\F_\ell}$ generated 
by the one parameter unipotent subgroups $\{u^t: t\in\overline\F_\ell\}$ for 
all order $\ell$ elements of $\overline{\pi_\ell(\Gamma_\ell^{\sc})}$.
It is semisimple by unipotent.
Let $\overline{\pi_\ell(\Gamma_\ell^{\sc})}^+$ be the (normal) subgroup of $\overline{\pi_\ell(\Gamma_\ell^{\sc})}$
generated by the order $\ell$ elements. Then $\overline{\pi_\ell(\Gamma_\ell^{\sc})}^+$ is a subgroup of $\bS_\ell(\F_\ell)$
and $[\overline{\pi_\ell(\Gamma_\ell^{\sc})}: \overline{\pi_\ell(\Gamma_\ell^{\sc})}^+]$ is prime to $\ell$.
The Nori envelope $\bS_\ell$ approximates the finite subgroup $\overline{\pi_\ell(\Gamma_\ell^{\sc})}\subset\GL_n(\F_\ell)$
in the sense that the index $[\bS_\ell(\F_\ell):\overline{\pi_\ell(\Gamma_\ell^{\sc})}^+]$ 
is bounded by a constant depending only on $n$ for all prime $\ell$ large enough compared to $n$ \cite[Thm. B(1), 3.6(v)]{Nor87}. 

\begin{prop}\label{Noricontain}
For $\ell\gg0$, the smooth group scheme $\mathcal{G}_{\ell}^{\der}$ is reductive.
\end{prop}

\begin{proof}
Suppose $\ell\geq n$.
Since $\Gamma_\ell^{\sc}$ is maximal compact in $\bG_\ell^{\sc}(\Q_\ell)$ for $\ell\gg0$,
the equality $\pi_\ell^{-1}(\mathcal{G}_\ell^{\der}(\Z_\ell))=\Gamma_\ell^{\sc}$ holds for $\ell\gg0$.
Thus, there is a constant $c$ such that 
\begin{equation}\label{index1}
[\mathcal{G}_\ell^{\der}(\Z_\ell):\pi_\ell(\Gamma_\ell^{\sc})]\leq c
\end{equation}
for all $\ell\gg0$ \cite[Cor. 2.5]{HL20}. Hence, after reduction we also have
\begin{equation}\label{index2}
[\mathcal{G}_{\F_\ell}^{\der}(\F_\ell):\overline{\pi_\ell(\Gamma_\ell^{\sc})}]\leq c
\end{equation}
If the proposition is false, then the unipotent radical of the special fiber 
$\mathcal{G}_{\F_\ell}^{\der}$ is non-trivial 
for infinitely many primes $\ell$. Thus, \eqref{index2}
implies that $\overline{\pi_\ell(\Gamma_\ell^{\sc})}$ contains a non-trivial 
normal unipotent subgroup $U_\ell$ (consisting of order $\ell$ elements) for infinitely many primes $\ell$.
Let $\bS_\ell'$ be the Nori envelope of the semisimplification of $\overline{\pi_\ell(\Gamma_\ell^{\sc})}\hookrightarrow\GL_n(\F_\ell)$
(with image $\overline{\pi_\ell(\Gamma_\ell^{\sc})}^{\red}$)
for $\ell\gg0$. By the definition of Nori envelope \cite[$\mathsection\mathsection1,3$]{Nor87}, for all $\ell\gg0$ 
we have a short exact sequence 
\begin{equation}\label{Norises}
1\to \bU_\ell\to \bS_\ell\stackrel{\pi}{\rightarrow}\bS_\ell'\to 1
\end{equation}
where $\pi$ is induced by semisimplification. For infinitely many primes $\ell$, 
we have $\dim\bU_\ell\geq 1$ since $\bU_\ell$ contains a one parameter subgroup 
$t\mapsto u^t:=\text{exp}(t\text{log}(u))$ \cite{Nor87}
for some non-identity element $u\in U_\ell$.

Since $\bS_\ell'$ is semisimple, \cite[Prop. 4(iii)]{HL16} asserts that 
$\dim \bS_\ell'=\dim_\ell\bS_\ell'(\F_\ell)$ (the \emph{$\ell$-dimension} \cite[$\mathsection2$]{HL16}).
Since $\Gamma_\ell^{\sc}$ is hyperspecial maximal compact in $\bG_\ell^{\sc}(\Q_\ell)$,
there is a reductive group scheme $\mathcal{H}_\ell$ over $\Z_\ell$ such that 
the generic fiber is $\bG_\ell^{\sc}$ and $\mathcal{H}_\ell(\Z_\ell)=\Gamma_\ell^{\sc}$.
By the definition of $\ell$-dimension and \cite[Prop. 4(iii)]{HL16} again, we obtain
\begin{equation}\label{elldim}
\dim_\ell\bS_\ell'(\F_\ell)=\dim_\ell\bS_\ell'(\F_\ell)^+=\dim_\ell \overline{\pi_\ell(\Gamma_\ell^{\sc})}^{\red}=\dim_\ell \Gamma_\ell^{\sc}
=\dim_\ell \mathcal{H}_\ell(\F_\ell)=\dim \bG_\ell^{\sc}.
\end{equation}
It follows from \eqref{Norises} that $\dim \bS_\ell>\dim \bG_\ell^{\der}$ for infinitely many $\ell$, but
this contradicts \cite[Thm. 7]{Lar10}.
\end{proof}

Let $\mathcal{G}_{\F_\ell}^{\red}$ be the quotient of $\mathcal{G}_{\F_\ell}^{\circ}$ by its unipotent radical.
For $\ell\gg0$, the special fiber $\mathcal{G}_{\F_\ell}^{\der}$ (of $\mathcal{G}_{\ell}^{\der}$) is a normal connected semsimple 
subgroup of $\mathcal{G}_{\F_\ell}^{\circ}$ 
(Proposition \ref{Noricontain}), which injects into $\mathcal{G}_{\F_\ell}^{\red}$.
It follows that
\begin{equation}\label{eq4}
\begin{split}
\dim \mathcal{G}_{\F_\ell}\geq \dim\mathcal{G}_{\F_\ell}^{\red} &\geq \dim \mathcal{G}_{\F_\ell}^{\der} +\rk\mathcal{G}_{\F_\ell}^{\red}
-\rk \mathcal{G}_{\F_\ell}^{\der}\\
&=\dim \bG_{\ell}^{\der} +\rk\bG_\ell-\rk \bG_{\ell}^{\der}=\dim\bG_\ell.
\end{split}
\end{equation}
Therefore, \eqref{eq4} is an equality and the special fiber $\mathcal{G}_{\F_\ell}$ is reductive for $\ell\gg0$.
\end{proof}

\begin{remark}
Let $F$ be a finitely generated field of characteristic $p$ and $Y$ be a smooth projective variety defined over $F$.
Conjecture \ref{Larsenconj} holds for the $\Q$-compatible system $\{H^w(Y_{\overline F},\Q_\ell)\}_{\ell\neq p}$ \cite[Thm. 1.2]{CHT17}.
\end{remark}

\subsubsection{Proof of Theorem \ref{thm4}}
Embed $\Q_\ell$ into $\C$ for all $\ell$.
Since $\End_{\overline K}(A)=\Z$, the representations $\rho_\ell$ 
are all absolutely irreducible by the Tate conjecture proven by Faltings \cite{Fa83}.
Since the formal bi-character of $(\bG_\ell\hookrightarrow\GL(V_\ell))\times\C$ 
is independent of $\ell$ \cite[Thm. 3.19]{Hu13}, condition \ref{thm4}(b) and Theorem \ref{LP1}
imply that the tautological representation $(\bG_\ell\to\GL(V_\ell))\times\C$ is independent of $\ell$.
Since condition \ref{thm4}(b) and Theorem \ref{thm3}(ii) hold, 
the simple factors of $\bG_\ell\times_{\Q_\ell}\overline\Q_\ell$ are 
of the same type and the invariance of roots condition holds by Corollary \ref{LP2}.
We conclude that condition \ref{thm2}(a),(b),(c) hold.
Hence, Theorem \ref{thm2}(ii), Theorem \ref{hyperex}(ii), Theorem \ref{QH}, and Corollaries \ref{cor2.1}, \ref{cor2.2}
give Theorem \ref{thm4} except the last assertion.
It suffices to show that 
for $\ell\gg0$, the two $\Z_\ell$-representations
\begin{equation}
\begin{split}
V_{\Z_\ell}: \Gal(\overline K/K)&\to \bG(\Z_\ell)=\mathcal{G}(\Z_\ell)\to \GL_{2g}(\Z_\ell),\\
H^1(A_{\overline K},\Z_\ell): \Gal(\overline K/K)&\to \mathcal{G}_\ell(\Z_\ell)\to \GL(H^1(A_{\overline K},\Z_\ell))
\end{split}
\end{equation}
are isomorphic. 

Since $V_{\Z_\ell}\otimes \Q_\ell\cong H^1(A_{\overline K},\Q_\ell)\cong H^1(A_{\overline K},\Z_\ell)\otimes \Q_\ell$,
there is an element $\Phi_\ell$ in the free $\Z_\ell$-module $\Hom_{\Gal(\overline K/K)}(V_{\Z_\ell},H^1(A_{\overline K},\Z_\ell))$
that is non-zero after mod $\ell$ reduction.
Since $\End_{\overline K}(A)=\Z$, the representation $H^1(A_{\overline K},\F_\ell)$ is absolutely irreducible for $\ell\gg0$ \cite[Theorem 4.2]{FW84}.
Thus, the non-zero $\Gal(\overline K/K)$-equivariant map
$$\Phi_\ell\times\F_\ell: V_{\Z_\ell}\otimes \F_\ell\to H^1(A_{\overline K},\F_\ell)$$ 
is surjective for $\ell\gg0$.
By Nakayama's lemma, $\Phi_\ell$ is surjective for $\ell\gg0$. 
Therefore, $\Phi_\ell$ is bijective and induces an isomorphism of the Galois representations $V_{\Z_\ell}$
and $H^1(A_{\overline K},\Z_\ell)$ for $\ell\gg0$. \qed

\begin{remark}
Embed $\Q_\ell$ into $\C$.
Let $\{(\bH_i,V_i): 1\leq i\leq k\}$ be the irreducible factors of the
 irreducible representation $(\bG_\ell^{\der}\to\GL(V_\ell))\times\C$,
i.e., $\bH_i$ is almost simple and $V_i$ is irreducible ($\mathsection2.2.2.1$).
By \cite[Thm. 3.18]{Pi98}, 
the irreducible representation $(\bG_\ell\to\GL(V_\ell))\times\C$ 
is a strong Mumford-Tate pair of weight $\{0,1\}$.
Then \cite[Prop. 4.5]{Pi98}
and \cite[Table 4.6]{Pi98} imply that $k$ is odd and 
the representations $(\bH_i,V_i)$ have the following possibilities:
\begin{enumerate}
\item[$A_r$:] $\bigwedge^r$(standard), $r\equiv 1$ mod $4$, $r\geq 1$.
\item[$B_r$:] Spin, $r\equiv 1,2$ mod $4$, $r\geq 2$.
\item[$C_r$:] Standard, $r\geq 3$.
\item[$D_r$:] Spin$^+$, $r\equiv 2$ mod $4$, $r\geq 6$.
\end{enumerate}

\noindent One observes that each simple Lie algebra has at most one possible representation.
\end{remark}

\subsection{Final remarks}
\begin{enumerate}[(1)]
\item We construct a common $E$-form $\bG\hookrightarrow\GL_{n,E}$ 
of the algebraic monodromy representations $\bG_\lambda\hookrightarrow\GL_{n,E_\lambda}$ of the system  \eqref{rhobullet}
in case it is absolutely irreducible and $\bG_\lambda$ is connected (for all $\lambda$) in Theorem \ref{thm1}(ii). 
The non-absolutely irreducible case and the non-connected case remain open.
\item Let $\rho_\bullet$ be the system in Theorem \ref{thm2} and assume Conjecture \ref{Larsenconj}.
Then Corollary \ref{cor2.1}(i) produces an adelic representation $\rho_\A^{\bG}:\Gal(\overline K/K)\to \bG(\A_\Q)$.
Let $\rho_{\F_\ell}^{\bG}$ be the mod $\ell$ reduction of the $\ell$-component $\rho_{\ell}^{\bG}$ of $\rho_\A^{\bG}$ for $\ell\gg0$.
One can deduce by \cite[Thm. A, Cor. B]{Hu15} that there is a constant $C>0$ such that 
the index satisfies
\begin{equation*}
\hspace{.55in}[\bG(\F_\ell):\rho_{\F_\ell}^{\bG}(\Gal(\overline K/K))]\leq C,\hspace{.1in}\forall\ell\gg0.
\end{equation*}
Thus, the composition factors  of Lie type in characteristic $\ell$ of $\rho_{\F_\ell}^{\bG}(\Gal(\overline K/K))$
can be described when $\ell\gg0$, see a similar result \cite[Cor. 1.5]{Hu18} for certain type A compatible system. 
%\item In case of Theorem \ref{thm4}, there is also a constant $C>0$ such that 
%\begin{equation*}
%(\ast)\hspace{.3in} [\bG(\Z_\ell):\rho_{\ell}^{\bG}(\Gal(\overline K/K))]\leq C,\hspace{.1in}\forall\ell\gg0  
%\end{equation*}
%because the determinant map is the cyclotomic character to the power $g=\dim A$. Does ($\ast$) hold in general?
\item The smooth subgroup scheme $\mathcal G_\lambda\subset\GL_{n,\mathcal{O}_\lambda}$ in Corollary \ref{cor1.5} 
depends on the choice of an $\mathcal O_\lambda$-lattice of $V_\lambda$. 
It is shown in \cite{Ca17} that for almost all $\lambda$, the subscheme
$\mathcal G_\lambda\subset\GL_{n,\mathcal{O}_\lambda}$ is unique up to isomorphism.
\item The $E$-forms $\bG$ and $\bG\subset\GL_{n,E}$ we constructed in Theorem \ref{thm1} are not unique
for the simple reason that 
$\Sh^1(E,\bG^{\ad})$ in Theorem \ref{local-globalred} may not be trivial, where $\bG^{\ad}$ denotes the adjoint quotient of $\bG$.
\item Let $S'$ be a non-empty finite subset of $\mathcal{P}_{E,f}$. 
Actually, by examining the proof, main theorem \ref{meta1} holds if we 
replace $\mathcal{P}_{E,f}^{(p)}$ with $\mathcal{P}_{E,f}\backslash S'$.
\item In the characteristic zero case, Question Q in $\mathsection3.3.2$ should be addressed if one wants to apply
main theorem \ref{meta2} to an $E$-compatible system when $E$ is totally imaginary.
However, one can always use main theorem \ref{meta1} by omitting a finite place of $E$ if one knows that $\bG_\lambda$
is quasi-split for almost all $\lambda$, or, 
one can take the Weil restriction $\mathrm{Res}_{E/\Q}$ ($\mathsection3.2.2$) to obtain a $\Q$-compatible system 
and see if main theorem \ref{meta2} can be applied.
\end{enumerate}

\section*{Acknowledgments} 
I would like to thank Akio Tamagawa, Anna Cadoret, Gebhard B$\ddot{\text{o}}$ckle for their interests and comments, 
 Macro D'Addezio and Ambrus P\'al for their interests and introducing their works \cite{D'Ad20} and \cite{Pa15} to me,
and the referee for his/her comments and suggestions.

\vspace{.1in}

\end{document}